\documentclass[11pt]{article}
\usepackage{style}
\title{\sc{Combinatorial slicing problems of polytopes:} \\[.3em] \large \sc{How (not) to reconstruct a polytope from its slices}}

\author{Anna Birkemeyer \and Anouk E.~Brose \and Marie-Charlotte Brandenburg \and Niklas Pr\"un}
\date{}

\begin{document}

\maketitle

\begin{abstract}
    We study combinatorial aspects of hyperplane sections of polytopes, focusing on how much the combinatorics of the sections determines the combinatorics of the original polytope. We show that, in general, combinatorial information about the sections is not enough to determine even the $f$-vector of the polytope. In contrast, for sufficiently generic simple polytopes, the function recording the number of vertices of each central section determines the full combinatorial type. We organize different combinatorial slicing properties into hierarchies, separately for affine and central sections, according to the level of combinatorial structure they determine on the polytope. In analogy with classical metric slicing problems, we formulate combinatorial analogues of the Busemann-Petty problem and Bourgain's slicing problem by replacing volume with face numbers, and show that they fail in every dimension and every face dimension. 
\end{abstract}

\section{Introduction}

How much can the combinatorics of the slices of a polytope tell us about the combinatorics of the polytope itself? We study this question for both \emph{affine slices} and \emph{central slices}, i.e., intersections of polytopes with arbitrary affine hyperplanes and with hyperplanes through the origin, respectively. We investigate how face lattices of slices arise as subposets of the face lattice of the polytope, and determine relations between combinatorial properties of a polytope and those of its slices. For generic simple polytopes, we prove that already the numbers of vertices of the central slices determine their combinatorial type. Moreover, we formulate combinatorial analogues of the Busemann-Petty problem and Bourgain's slicing problem and show that they fail in every dimension.

Traditionally, slicing problems have mostly been investigated from a metric perspective. Prominent examples include descriptions of volume-extremal slices of cubes, cross-polytopes, simplices, and general norm balls \cite{ball86_cubeslicing$rn$,MeyerPajor88:lpball,Webb1996,Koenig2021,Pournin2023}. In comparison, the combinatorial study of slices of polytopes is recent. For general polytopes, \citeauthor{Khovanskii06:SectionsPolytopes} bounded the number of faces of generic slices of a simple polytope \cite{Khovanskii06:SectionsPolytopes}. More recently, \citeauthor{deloera2025numberverticeshyperplanesection} investigated the possible numbers of vertices of hyperplane sections \cite{deloera2025numberverticeshyperplanesection}. A description of the parameter space of all slices is given in \cite{Brandenburg2025}, providing an algorithmic framework for computing extremal slices and enumerating all combinatorial types. The combinatorial types of slices have attracted particular attention in the case of the cube, where affine slices have been enumerated up to dimension $6$ \cite{Nakamura1980,Fukuda1997,brandenburg2025combinatoricsslicescubes}, and central slices up to dimension $7$ \cite{brandenburg2025combinatoricsslicescubes}.

\paragraph{Combinatorics of polytopes and their slices}
Our first contributions concern the relationship between the face lattice of a slice and the face lattice of the polytope. More precisely, if a hyperplane $H$ intersects a polytope $P$, then every face of $P\cap H$ arises as the intersection of $H$ with a face of $P$. Thus, $H$ selects a collection of faces of $P$ that forms a subposet of its face lattice. We give necessary conditions for such subposets to arise from hyperplane sections (\Cref{sec:lattices}). Some of these properties are used implicitly in prior work on slices; here, we make them explicit and develop them as part of a systematic investigation.

Knowing the combinatorial types of slices provides information about many subposets of the face lattice of $P$, but not about how these subposets fit together within the whole face lattice. This leads us to the question of how much the combinatorics of the slices determine the combinatorics of the polytope itself. To make this question precise, we introduce two notions of slice-equivalence. We call $P$ and $Q$ \emph{pointwise affine-slice-equivalent \textup{(}a.s.e.\textup{)}} if, for every affine hyperplane $H$, the slices $P\cap H$ and $Q\cap H$ are combinatorially equivalent. Similarly, we call them \emph{setwise a.s.e.} if the sets of combinatorial types of all affine slices agree. We compare these notions with several combinatorial relations between $P$ and $Q$, including combinatorial equivalence ($\simeq$), equality of their oriented matroids ($\OM$), and relations between hyperplane arrangements $\mathcal H$ defined by their vertices (see \Cref{sec:hierarchies} for precise definitions). This yields hierarchies of properties for central and affine slices.
The hierarchy for central slices is given in \Cref{thm:hierarchy-central}; for affine slices, it is as follows.

\begingroup
\hypersetup{hidelinks}
    \begin{manualtheorem}{\Cref{th:affine-hierarchy}} Let $P,Q \subset \R^d$ be $d$-dimensional polytopes. The following implications and non-implications hold for every $d\ge3$, unless stated otherwise.
\begin{center}
    \centering
    $P = Q$ $\iff$
    $P,Q$ pointwise a.s.e
    $\iff$
    $\mathcal H(P \times \{1\}) = \mathcal H(Q \times \{1\})$ \\
    $\Downarrow \ \not\Uparrow$ \\
    $\OM(P) = \OM(Q)$ $\iff$
    $ \OM (\mathcal H(P \times \{1\})) = \OM(\mathcal H(Q \times \{1\}))$ \\
    $\Downarrow \ \not\Uparrow$ \hspace{15em}  $\Downarrow \ \not\Uparrow$ \\
    $P, Q$ 
    setwise a.s.e
    \hspace{4em} $\Large\substack{\Large \not \Rightarrow \text{ if } d=3\\ \hspace{-2.4em} \Large \not \Leftarrow}$ \hspace{1.5em} $P\simeq Q$ \hspace*{1em}
    \end{center}
    \end{manualtheorem}
\endgroup

The hierarchies show that the combinatorics of slices know little about the combinatorics of the polytope itself.
Thus, rather than asking which combinatorial structures are determined by the slices of polytopes in general, one can ask under which additional assumptions a polytope can be combinatorially reconstructed from partial information about its slices.
Reconstruction from partial information is a classical theme in both geometry and combinatorics. In geometric tomography, one asks to reconstruct convex or star bodies from \emph{metric} information about their sections or projections \cite{gardner-book}. Notably, every centrally symmetric convex body or star body is uniquely determined by its spherical Radon transform, i.e., by the volumes of all central slices; see \cite{boeroeczky09_stabledeterminationconvex} for generalizations. On the combinatorial side, \textcite{Blind1987} proved that simple polytopes are reconstructible from their vertex-edge graphs; see also \cite{Kalai1988,Bayer2018}. Reconstructing a graph from certain subgraphs is the subject of the famous
Kelly–Ulam Reconstruction Conjecture \cite{Bondy77_survey_graph-reconstruction,ONeil_UlamGraphReconstructionConjecture}. In a related direction, it is conjectured that affine slices of the standard cube are reconstructible from their graphs \cite[Question 1]{brandenburg2025combinatoricsslicescubes}. Here, instead, we ask whether the combinatorial type of the ambient polytope can be reconstructed from combinatorial information about its slices. We show that, for simple polytopes with sufficiently generic vertices, already the numbers of vertices of the central slices determine the graph, and hence the full combinatorial type of the polytope.

\begingroup
\hypersetup{hidelinks}
    \begin{manualtheorem}{\Cref{th:simple-polytope-reconstructible}}
        Let $P \subset \R^d$ be a simple $d$-dimensional polytope containing the origin in its interior.
        If any three vertices of $P$ are linearly independent,
        then the combinatorial type of $P$ is reconstructible from the function $\varphi_P: \Sph^{d-1} \to \N$ given by $ \varphi_P(u) = f_0(P \cap u^\perp)$. 
    \end{manualtheorem}
\endgroup

By duality between central sections and projections, \Cref{th:simple-polytope-reconstructible} equivalently reconstructs simplicial polytopes with generic facet normals: If $P$ is a simplicial $d$-dimensional polytope such that any $3$ facet normals are linearly independent, then the combinatorial type of $P$ is reconstructible from $\varphi_P: \Sph^{d-1} \to \N$, $\varphi_P(u) = f_{d-2}(\pi_u(P))$, where $\pi_u$ denotes the orthogonal projection in direction $u$.

\paragraph{Combinatorial Bourgain's slicing problem}
Since reconstruction of the combinatorial type from the combinatorics of slices fails for general polytopes, we turn to weaker comparison questions, replacing equality by inequalities. These questions are inspired by classical slicing problems from convex geometry.
The \emph{Busemann-Petty problem} compares two centrally symmetric bodies through the volumes of their central slices: if the volume of every central slice of one body is bounded by the volume of the corresponding central slice of the other body, does the same inequality hold for the volumes of the bodies themselves? 
In a series of works, the Busemann-Petty problem was shown to hold in dimensions $\leq 4$, and was disproved in every higher dimension \cite{Gardner19942,Gardner94:IntBodies,Koldobsky1998,GKS99:BusemannPetty,Zhang1999}. Recently, \textcite{guan2024notebourgainsslicingproblem} and \textcite{KlartagLehec25:AffirmativeBourgain} showed that the corresponding inequality does indeed hold up to a universal constant, resolving the almost 40-year-old \emph{Bourgain's slicing problem} (see \cite{Klartag2022} for a survey).
In analogy to these classical problems, we formulate the following combinatorial versions for polytopes. We denote by $f_k(P)$ the number of $k$-dimensional faces of a polytope $P$.

\begin{problem} \label{problems}
    Let $P,Q\subset \R^d$ be $d$-dimensional polytopes containing the origin in their interiors, let $k \in [d-1]$ and assume that $f_{k-1}(Q \cap u^\perp) \leq f_{k-1}(P \cap u^\perp)$ holds for all $u \in \Sph^{d-1}$.
    \begin{enumerate}[label=\textup{(}\alph*\textup{)}]
        \item \emph{(Combinatorial Busemann-Petty problem)} Does it follow that $f_{k}(Q) \leq f_k(P)$?\label{prob:BP}
        \item \emph{(Combinatorial Bourgain's slicing problem)} Does there exist a universal constant $C>0$ such that $f_{k}(Q) \leq C \cdot f_k(P)$?\label{prob:Bourgain}
        \item Does there exist a constant $C_d > 0$, depending on the dimension $d$, such that $f_{k}(Q) \leq C_d \cdot f_k(P)$?\label{prob:dim}
    \end{enumerate}
\end{problem}

\begingroup
\hypersetup{hidelinks}
    \begin{manualtheorem}{\Cref{th:disprove-BP}}
    All three statements in \Cref{problems} are false. In particular,
    the combinatorial Busemann-Petty problem and the combinatorial Bourgain's slicing problem are false in every dimension $d$ and for every $k \in [d-1]$.
\end{manualtheorem}
\endgroup

Although \Cref{problems} is stated for central sections through the origin, the result of \Cref{th:disprove-BP} extends to arbitrary affine sections (\Cref{rem:bpp-affine}).

\paragraph{Organization} This article is organized as follows. In \Cref{sec:lattices} we describe face lattices of slices as subposets of face lattices of $P$. We then describe transformations which preserve slices, and characterize slices of pyramids and prisms in \Cref{sec:constructions}. Afterwards, in \Cref{sec:reconstruction-central}, we consider classes of polytopes for which we can reconstruct the combinatorics of the polytope from its slices. \Cref{sec:hierarchies} concerns the hierarchies of slicing properties, and provides counterexamples for many of the relations.  Finally, in \Cref{sec:slicing-problems} we construct counterexamples to the combinatorial versions of the Busemann-Petty problem and Bourgain's slicing problem.

\paragraph{Acknowledgments} We thank Diana Wilke for many inspiring discussions.
The work on this article started in the context of the \emph{Dive into Research}, a research experience for Bachelor and Master students, supported by the SPP 2458 ``Combinatorial Synergies'', funded by the Deutsche Forschungsgemeinschaft (DFG, German Research Foundation). During the program, all authors received funding through SPP 2458. In addition, MB was supported by the SPP 2458 and AEB was partially supported by NSF grants 2348578 and 2434665.

\section{Face lattices of slices as subposets}\label{sec:lattices}

To understand which combinatorial types can occur as slices of a fixed polytope $P$, we first describe how the faces of a slice arise from faces of $P$. This yields a canonical embedding of the face lattice of a slice into the face lattice of $P$. In this section, we describe necessary properties of the subposets that can arise in this way, thus yielding candidates for combinatorial types of slices. 

Throughout this article, a polytope $P \subset \R^d$ is assumed to be $d$-dimensional, and a \emph{slice} of $P$ is the intersection of $P$ with an affine hyperplane (possibly empty).
For a polytope $P \subset \R^d$, we denote by $\verts(P)$ its set of vertices and by $\mathcal L(P)$ its face lattice. We write $G \preceq F$ for faces $G,F$ of $P$ if $G$ is a face of $F$.
For background on the combinatorics of polytopes, we refer to \cite[Ch.~1-2]{Ziegler95:LecturesPolytopes}.

Given a hyperplane $H \subset \R^d$, the combinatorial type of the slice $P \cap H$ is completely determined by the faces of $P$ which have nonempty intersection with $H$. We thus consider
$$
    \Fcal_P(H):= \{ F \in \Lcal(P) \colon F \cap H \neq \emptyset \} \cup \{ \emptyset\}\ .
$$
We say that a face $F$ is \emph{properly intersected} by $H$ if $\relint(F) \cap H \neq \emptyset$ and $F \not \subset H$. 
Any face in $\Fcal_P(H)$ is either entirely contained in $H$, properly intersected by $H$, or only intersected in its boundary. We partition these faces accordingly:
\begin{align*}
    \mathcal F^{\subset}_P(H) &= \{ F \preceq P \mid F \subset H \} \ , \\
     \mathcal F^{\circ}_P(H) &= \{ F \preceq P \mid F \not\subset H \text{ and } \relint(F) \cap H \neq \emptyset\} \ , \\
     \mathcal F^{\partial}_P(H) &= \{ F \preceq P \mid F \cap H \neq \emptyset \text{ and } \relint(F) \cap H = \emptyset\} \ .
\end{align*}

When $P$ and $H$ are fixed, we just write $\Fcal^\subset, \Fcal^\circ$ and $\Fcal^\partial$. Next, we study the structure of these sets as subsets of $\Lcal(P)$ and show that $\Fcal^\subset$ and $\Fcal^\circ$ are enough to determine $\Fcal^\partial$. Recall from \cite[Ch.~3]{stanleyEC} that a subset $S$ of a poset is a \emph{down-set} if it is downward closed, i.e., if 
\[
    x \in S, y \preceq x \implies y \in S \ .
\]
Similarly, a subset $S$ is an \emph{up-set} if it is upward closed, i.e., if 
\[
    x \in S, y \succeq x \implies y \in S \ .
\]

\begin{lemma}\label{lem:upward-downward-closure}
    Let $P$ be a polytope and $H$ be an affine hyperplane. Then $\mathcal F^{\subset}$ is a down-set of $\Lcal(P)$,  $\mathcal F^{\circ}$ is an up-set of $\Lcal(P)$ and $\mathcal F^{\partial}$ is uniquely determined by $\mathcal F^{\subset}$ and $\mathcal F^{\circ}$ as
     \[
        \Fcal^\partial = \{ F \preceq P \mid F \not \in \Fcal^\circ \cup \Fcal^\subset  \text{ and} \exists G \preceq F, G \neq \emptyset \text{ with} \ G \in \Fcal^\subset\} \ .
     \]
\end{lemma}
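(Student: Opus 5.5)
The plan is to check the three claims one at a time, using only two facts: faces of a face of $P$ are faces of $P$, and for a face $G \preceq F$ with $G \neq F$ we have $G \subset \partial F$, so $\relint(G) \cap \relint(F) = \emptyset$. Throughout I will use $H = \{x : \langle a,x\rangle = b\}$, with the affine function $h(x) = \langle a,x\rangle - b$.

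\emph{Down-set.} This is immediate: if $F \subset H$ and $G \preceq F$, then $G \subset F \subset H$. The empty face also lies in $\Fcal^\subset$ trivially.

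\emph{Up-set.} Let $G \in \Fcal^\circ$ and $G \preceq F$. Then $F \not\subset H$, since otherwise $G \subset H$. It remains to show $\relint(F) \cap H \neq \emptyset$. The key observation is that $h$ takes both signs on $G$: it vanishes at a relative interior point $p$ of $G$, and is not identically zero on $G$. Since $h$ is affine and $p \in \relint(G)$, a point of $G$ with $h>0$ reflected through $p$ stays in $G$ and has $h<0$. So there are $x,y \in G \subset F$ with $h(x)>0>h(y)$. Pick any $q \in \relint(F)$. The points $q + \varepsilon(x-q)$ and $q+\varepsilon(y-q)$ lie in $\relint(F)$ for $\varepsilon \in (0,1]$. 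For small $\varepsilon$, if $h(q)\ne 0$, one of these points has $h$ of the opposite sign to $h(q)$. Concretely, if $h(q)>0$, take $q'=q+\varepsilon'(y-q)$ with $\varepsilon'$ near $1$, so that $h(q')<0$. Now $\relint(F)$ is convex, so the segment between $q$ and $q'$ lies in it and contains a zero of $h$. If $h(q)=0$ we are already done. Hence $F \in \Fcal^\circ$.

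\emph{Characterization of $\Fcal^\partial$.} By definition the three sets are pairwise disjoint; in particular $\Fcal^\partial$ avoids $\Fcal^\circ \cup \Fcal^\subset$. So I need to show: for $F \notin \Fcal^\circ\cup\Fcal^\subset$, we have $F\cap H\neq\emptyset$ if and only if some nonempty $G\preceq F$ lies in $\Fcal^\subset$.

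For ($\Leftarrow$), $\emptyset \neq G \subset F\cap H$.

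For ($\Rightarrow$), this is the main step. Since $F\cap H \neq \emptyset$ but $\relint(F)\cap H=\emptyset$ (because $F\notin\Fcal^\circ$ and $F \not\subset H$), the hyperplane $H$ meets $F$ while missing $\relint F$. Hence $h$ has constant sign on $F$ and vanishes somewhere on $F$. If $h$ changed sign on $F$, the convexity argument above would give a zero in $\relint F$. Replacing $a,b$ by their negatives if needed, $h\ge0$ on $F$, so $H$ is a supporting hyperplane of $F$. Thus $G := F\cap H$ is a nonempty face of $F$, and hence of $P$, and $G\subset H$. So $G \in \Fcal^\subset$.

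This also shows that every $F \in \Fcal_P(H)$ falls into exactly one of the three classes, so the displayed formula describes $\Fcal^\partial$ exactly. The only slightly delicate point is the sign-change argument inside relative interiors. I would isolate it as one small claim: \emph{an affine function taking both signs on a polytope $F$ vanishes somewhere on $\relint F$.} Both the up-set step and the supporting-hyperplane step then follow from it.
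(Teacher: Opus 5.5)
Your proof is correct and follows essentially the same route as the paper. Both the up-set claim and the description of $\Fcal^\partial$ come down to the fact that a hyperplane meeting a face but missing its relative interior must support it. The paper uses this fact implicitly, and you prove it via your sign-change claim.

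Two harmless slips to tidy up. First, a point $x \in G$ reflected through $p$ need not stay in $G$ unless you first replace $x$ by $p+t(x-p)$ for small $t>0$. Second, $q+\varepsilon(x-q) \in \relint(F)$ holds for $\varepsilon \in [0,1)$ rather than $(0,1]$, so the point of opposite sign comes from $\varepsilon$ close to (but below) $1$, not from small $\varepsilon$.
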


\begin{proof}
    If $P \cap H = \emptyset$, then $\Fcal^\subset = \{\emptyset\}$, and the other sets are empty. Assume $P \cap H \neq \emptyset$. Let $F \in \mathcal F^\subset$, i.e., $F \subset H$, and $G \preceq F$. Then, since $F \subset H$, we have $G \subset H$ and therefore $G \in \mathcal F^\subset$. Thus, $\mathcal F^\subset$ is a down-set.
    Conversely, let $F \in \mathcal F^{\circ}$ and $F \preceq G$. Since $F \cap H \neq \emptyset$ and $F \not \subseteq P \cap H$, we also have $G \cap H \neq \emptyset$ and $G \not \subseteq P \cap H$. If $\relint (G) \cap H = \emptyset$, then $H$ is a supporting hyperplane of $G$. Since $F$ is a face of $G$, $H$ must also be supporting hyperplane of $F$, which contradicts the assumption that $F \in \mathcal F ^\circ$. Therefore $\relint (G) \cap H \neq \emptyset$ and $G \in \mathcal F^\circ$. Thus, $\mathcal F^\circ$ is an up-set. 
    $\mathcal F^\partial$ can be expressed in terms of $\mathcal F^\subset$ and $\mathcal F^\circ$ as follows:
    A face $F$ of $P$ is in $\Fcal^\partial$ if and only if $H$ is a supporting hyperplane of $F$ not containing $F$ itself, that is, $F \cap H$ is some proper face $G$ of $F$, and $G \in \Fcal^\subset$. Hence $\Fcal^\partial = \{ F \preceq P \mid F \not \in \Fcal^\circ \cup \Fcal^\subset, \text{ and} \exists G \preceq F , G \neq \emptyset \text{ with} \ G \in \Fcal^\subset\}$.
\end{proof}

\begin{remark}\label{rem:minimal-elements}
    Note that \Cref{lem:upward-downward-closure} implies that $\Fcal^\subset$ is uniquely determined by its maximal elements (with respect to containment), and $\Fcal^\circ$ is uniquely determined by its minimal elements. However, it is not hard to see that $\Fcal^\subset$ is also uniquely determined by its minimal elements, that is, by the vertices in $\Fcal^\subset$.
\end{remark}
Next we want to understand how $\Lcal(P \cap H)$ embeds into $\Lcal(P)$ as a subposet. The following lemma describes faces of the slice $P \cap H$ in terms of these sets of faces of $P$.

\begin{lemma}\label{lem:types-of-faces-in-slices}
    Let $P$ be a polytope and $H$ be an affine hyperplane with $P \cap H \neq \emptyset$. The faces of the slice $P \cap H$ fall into the following categories:
    \begin{enumerate}[label=\textup{(}\roman*\textup{)},itemsep=-5pt, topsep=-5pt]
        \item $F$ for $F \in \mathcal F^{\subset}$. In this case, $\dim(F \cap H) = \dim(F)$.
        \item $F \cap H$ for $F \in \mathcal F^\circ$. In this case, $\dim(F \cap H) = \dim(F) - 1$.
    \end{enumerate}
\end{lemma}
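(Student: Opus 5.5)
The plan is to use the standard fact that every face of $P\cap H$ has the form $F\cap H$ for some face $F$ of $P$, and then to pick $F$ minimally so that the two cases of the statement appear.

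\textbf{Step 1: every face of the slice comes from a face of $P$.} Let $G$ be a nonempty face of $P\cap H$. Since $P\cap H=\{x\in P: \langle a,x\rangle=b\}$ and $P=\{x:\langle c_i,x\rangle\le d_i\}$, the slice is a polytope cut out by the facet inequalities of $P$ together with the equation of $H$. A face of $P\cap H$ is therefore obtained by turning some subset $I$ of these inequalities into equalities. This gives $G=F_I\cap H$ with $F_I=\{x\in P:\langle c_i,x\rangle=d_i,\ i\in I\}$, which is a face of $P$.

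\textbf{Step 2: choose the smallest such face.} Let $F$ be the smallest face of $P$ containing $G$, namely the intersection of all faces of $P$ containing $G$. This face is contained in $F_I$. Then $G=F\cap H$, since
\[
G\subseteq F\cap H\subseteq F_I\cap H=G.
\]
By minimality of $F$, a point $x\in\relint(G)$ lies in $\relint(F)$. This holds because the smallest face containing a point $x$ is the face whose relative interior contains $x$, and every point of $\relint(G)$ has the same minimal face, which is $F$. Consequently $\relint(F)\cap H\neq\emptyset$, so $F\notin\mathcal F^\partial$.

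\textbf{Step 3: split into the two cases.} If $F\subset H$, then $F\in\mathcal F^\subset$ and $G=F$, which is case (i), and clearly $\dim(F\cap H)=\dim F$. Otherwise $F\in\mathcal F^\circ$, which is case (ii). For the dimension count, $H$ meets $\relint(F)$ but does not contain $\operatorname{aff}(F)$. Hence $H\cap\operatorname{aff}(F)$ is a hyperplane in $\operatorname{aff}(F)$ passing through a relative interior point of $F$. A small relatively open ball around that point inside $F$ meets this hyperplane in a set of dimension $\dim F-1$, so $\dim(F\cap H)=\dim F-1$.

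I expect the only delicate point to be Step 2: justifying that the minimal face $F$ actually meets $H$ in its relative interior, so that faces of $\mathcal F^\partial$ never need to be used. The cleanest route is to work with a relative interior point $x$ of $G$ and take $F$ to be the unique face of $P$ with $x\in\relint(F)$. Then $G\subseteq F$ follows because every face of $P\cap H$ containing $x$ in its relative interior is contained in the carrier face of $x$. The converse inclusion $F\cap H\subseteq G$ follows from Step 1, so $G=F\cap H$. The empty face is trivially covered by $\emptyset\in\mathcal F^\subset$, since $\emptyset\subset H$.
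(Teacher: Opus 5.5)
Your proposal is correct and follows essentially the same route as the paper: take the inclusion-minimal face $F$ of $P$ containing the slice-face $G$, use the fact that every face of $P\cap H$ is cut out by some face of $P$ to get $G=F\cap H$, and split according to whether $F\subset H$. Your carrier-face argument (a relative interior point of $G$ lies in the relative interior of $F$) and your explicit dimension count justify two points the paper only asserts: that the minimal face lies in $\mathcal{F}^\circ$ rather than $\mathcal{F}^\partial$, and that it loses exactly one dimension.
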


\begin{proof}
    Let $F$ be a proper face of $P\cap H$, and let $G$ be the inclusion-minimal face of $P$ containing $F$. Since all supporting hyperplanes of $G$ are supporting for $G \cap H$, we have that $G \cap H$ is a face of $P \cap H$, and $F \subseteq G \cap H$. On the other hand, it is straightforward to show that every proper face of $P \cap H$ arises as the intersection of $P \cap H$ with a supporting hyperplane of $P$. Thus, $F = G' \cap H$ for some face $G'$ of $P$, and minimality implies $G \preceq G'$, which proves the reverse inclusion $G \cap H \subseteq G' \cap H = F$. If $G \subset H$, then $F = G$ and we have $F \in \mathcal F^\subset, \dim(F \cap H) = \dim(F)$. Otherwise, we have $F = G \cap H$ with $\dim(F) = \dim(G) - 1$, so $F \in \mathcal F^\circ$.
\end{proof}

The previous statement shows that, to every $(k-1)$-face of a slice, we can associate a unique face of $P$, which is either of dimension $k$ or $(k-1)$. 
Thus, not every $(k-1)$-face of $P\cap H$ arises as the proper intersection of $H$ with a $k$-face of $P$. Nonetheless, the following statement shows that the number of $(k-1)$-faces of the slice is bounded by the number of $k$-faces of $P$.

\begin{proposition}\label{lemma:fkmin1_of_slice_leq_fk_of_poly}
    Let $P \subset \R^d$ be a $d$-dimensional polytope, and $H \subset \R^d$ be an affine hyperplane. Then for every $k \in [d]$ we have $f_{k-1}(P \cap H) \leq f_k(P)$.
\end{proposition}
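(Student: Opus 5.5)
We want an injection from the $(k-1)$-faces of $P\cap H$ into the $k$-faces of $P$. If $P\cap H$ is empty the claim is trivial, so assume it is nonempty.

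By \Cref{lem:types-of-faces-in-slices}, every $(k-1)$-face $F'$ of $P\cap H$ has a unique inclusion-minimal face $G$ of $P$ containing it, and either $F'=G\in\Fcal^\subset$ with $\dim G=k-1$, or $F'=G\cap H$ with $G\in\Fcal^\circ$ and $\dim G=k$. The second type already comes with a $k$-face $G$, and $G\mapsto G\cap H$ is injective on $\Fcal^\circ$, since $G$ is recovered as the minimal face containing $G\cap H$. The work is to assign $k$-faces to the first type without collisions.

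First I will dispose of the degenerate case $P\subset H$. This cannot happen for a $d$-polytope, but $P\cap H$ may be a facet of $P$. If $P\cap H$ is contained in a facet, then $P\cap H$ is a face of $P$. Its $(k-1)$-faces are $(k-1)$-faces of $P$, and each of them lies in at least two $k$-faces of $P$ when $k\le d-1$. For $k=d$ we have $f_{d-1}(P\cap H)=0\le 1$.

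In that case I would produce the injection by a flag/shelling-type argument. Alternatively I can use the duality count $f_{k-1}(F)\le f_k(\mathrm{pyramid})$-style: pick a vertex $v$ of $P$ not in $F=P\cap H$ (possible since $P\neq F$). Then map each $(k-1)$-face $E$ of $F$ to a $k$-face of $P$ containing $E$ but not contained in $H$. Such a face exists because the face figure of $E$ in $P$ has dimension $d-k\ge1$ more than its figure in $F$, so some $k$-face of $P$ containing $E$ leaves $H$.

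Injectivity is the main obstacle. Two different $E$ could pick the same $k$-face $G$, with $G\cap H$ a face of $G$ containing both. Since $G\not\subset H$ and $H$ supports $G$ (as $P\cap H$ is a face here), $G\cap H$ is a proper face of $G$ of dimension $\le k-1$. So it equals a single $E$, and injectivity follows.

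In the general case, where $H$ meets the interior of $P$, $H$ is not supporting, and a face $E\in\Fcal^\subset$ of dimension $k-1$ with $k\le d-1$ must be treated differently. For $E\in\Fcal^\subset$, consider the $k$-faces of $P$ containing $E$. Their directions modulo $\mathrm{aff}(E)$ form the vertices of the face figure $P/E$, a polytope of dimension $d-k\ge1$. The hyperplane $H$ induces a hyperplane in this face figure passing through the point corresponding to $E$.

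Since $E$ is a face of $P\cap H$ of dimension $k-1<\dim(P\cap H)$, the slice of the face figure is a proper face of it, so the induced hyperplane is a supporting hyperplane of $P/E$. Hence some vertex of $P/E$ lies strictly on one side, i.e., some $k$-face $G\supset E$ has $G\cap H=E$ exactly. Assign this $G$ to $E$. Such a $G$ is not in $\Fcal^\circ$, because $\relint G\cap H=\emptyset$. Its image is therefore disjoint from the $\Fcal^\circ$-part, and distinct $E$ give distinct $G$ since $E=G\cap H$.

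Combining the two parts gives the injection and hence $f_{k-1}(P\cap H)\le f_k(P)$. The delicate step is the supporting-hyperplane claim in the face figure, which I would verify via \Cref{lem:upward-downward-closure}: no $k$-face containing $E$ can be cut properly on both sides without $E$ failing to be a face of the slice.
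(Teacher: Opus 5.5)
\textbf{The supporting-hyperplane step is false.} Your overall scheme matches the paper's. A $(k-1)$-face of the slice coming from $\mathcal{F}^{\circ}$ is sent to the $k$-face it comes from. A $(k-1)$-face $E\in\mathcal{F}^{\subset}$ is sent to a $k$-face $G\supseteq E$ of $P$ with $G\cap H=E$. The problem is your justification, in the general case, for the existence of such a $G$. You claim that $H$ induces a \emph{supporting} hyperplane of the face figure $P/E$, and propose to verify it via \Cref{lem:upward-downward-closure} by arguing that the $k$-faces containing $E$ cannot lie on both sides of $H$. Both claims are false.

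Take $P=[-1,1]^3$, $H=\{x_1=x_2\}$, $k=1$ and $E=(1,1,1)$. The slice $P\cap H$ is the rectangle with vertices $(1,1,\pm1),(-1,-1,\pm1)$, so $E$ is a vertex of the slice lying in $H$. Of the three edges of $P$ at $E$, with directions $-e_1,-e_2,-e_3$:
\begin{itemize}
\item the one along $-e_3$ lies in $H$;
\item the other two meet $H$ only in $E$ and lie on opposite sides of $H$.
\end{itemize}
So the hyperplane induced on the triangle $P/E$ passes through one vertex and through the relative interior of the opposite edge. It is not supporting, and its intersection with $P/E$ is not a face. The step you single out as delicate therefore cannot be verified.

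\textbf{How to repair it.} You never need the supporting property. It suffices to find \emph{one} $k$-face $G\supseteq E$ with $G\not\subseteq H$, and this follows from the argument you already used in the degenerate case. The $k$-faces containing $E$ generate the cone of $P$ at $E$; the paper phrases this as $P\subseteq E+\operatorname{tcone}(E)$ with $\operatorname{tcone}(E)$ generated by these $k$-faces. If all of them lay in $H$, then $P\subseteq H$, which is impossible.

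For any such $G$, $\mathrm{aff}(G)\cap H$ is a $(k-1)$-flat containing $\mathrm{aff}(E)$, hence equal to it. So $G\cap H=G\cap\mathrm{aff}(E)=E$. This gives $G\notin\mathcal{F}^{\circ}$ and injectivity, exactly as you argue.

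With this replacement your proof is the paper's proof. The separate treatment of a supporting $H$ also becomes unnecessary, since the same argument covers it. One small slip there: if $P\cap H$ is a facet and $k=d$, then $f_{d-1}(P\cap H)=1$, not $0$; the inequality still holds.
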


\begin{proof}
    We describe an injection $\varphi$ from the $(k-1)$-faces of $P\cap H$ to the $k$-faces of $P$. Let $F$ be a $(k-1)$-dimensional face of $P \cap H$. From \Cref{lem:types-of-faces-in-slices} we get that $F \in \mathcal F^\subset$ or $F = G \cap H$ for $G \in \mathcal F^\circ$.
    In the second case, the face $G$ is a unique $k$-dimensional face of $P$, and we define $\varphi(F) = G$.
    In the first case, $F$ is itself a $(k-1)$-dimensional face of $P$.
    We argue that there exists some $k$-dimensional face $G$ of $P$ such that $F \prec G$ and $G \not \subseteq H$, such that $G$ contains no other $(k-1)$-dimensional face contained in $\Fcal^\subset$. Then, we can choose $\varphi(F) = G$ as an injective map.
   For the existence, suppose that all such $k$-faces $G$ containing $F$ lie in $H$. Recall that the \emph{tangent cone} of a face $F$ can be written as 
   \begin{align*}
   \operatorname{tcone}(F) &= \cone \lp \{ y - x \mid x \in F, y\in P \} \rp \\
   &= \cone \lp \{ y - x \mid x \in F, y \in G, \text{ for some $k$-face $G$ containing $F$} \} \rp \ .
   \end{align*}
   The second line implies $F + \operatorname{tcone}(F) \subset H$. On the other hand, the first line implies that
   that $P \subset F + \operatorname{tcone}(F)$, which implies $P \subset H$, a contradiction.
For injectivity, suppose $G$ is a $k$-face of $P$, with $F \subset G \not \subset H$, that is, $G \cap H =F$. Note that any subset $S \subseteq G$ contained in $H$ satisfies $S \subseteq G \cap H = F$. Hence, the only $(k-1)$-face of $P$ contained in $G \cap H$ is $F$. Thus, $\varphi$ is injective and the claim follows. 
\end{proof}

Given an affine hyperplane $H$, we define the \emph{canonical embedding} of  the face lattice $\mathcal L(P \cap H)$ of the slice into $\mathcal L(P)$ as 
\begin{align*}
    \varphi_H: \mathcal L(P \cap H) &\to \mathcal L(P) \\
    G &\mapsto \text{ inclusion-minimal face containing } G.
\end{align*}
By \Cref{lem:types-of-faces-in-slices}, if $\varphi_H(G)=F$, then either $G=F$ for $F \in \Fcal^\subset$ or $G=F \cap H$ for $F \in \Fcal^\circ$. In particular, the image of this embedding is $\Fcal_P^\subset(H) \cup \Fcal_P^\circ(H)$, and we deduce the following:

\begin{proposition}\label{prop:face-lattice-of-slice}
    The face lattice $\Lcal(P \cap H)$ is the induced subposet of $\Lcal(P)$ on $\Fcal_P^\subset(H) \cup \Fcal_P^\circ(H)$.
\end{proposition}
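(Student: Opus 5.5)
We need to show that $\varphi_H$ is an order embedding onto $\Fcal^\subset\cup\Fcal^\circ$: it is injective, its image is exactly this set, and for faces $G_1,G_2$ of $P\cap H$ we have $G_1\preceq G_2$ if and only if $\varphi_H(G_1)\preceq\varphi_H(G_2)$. The plan is to construct an explicit inverse on the image, namely $\psi(F)=F\cap H$. By \Cref{lem:types-of-faces-in-slices}, $\psi(F)$ equals $F$ for $F\in\Fcal^\subset$ and is a face of dimension $\dim F-1$ for $F\in\Fcal^\circ$. For the empty face we use $\varphi_H(\emptyset)=\emptyset\in\Fcal^\subset$. If $P\cap H$ is full (so $P\subset H$), this cannot happen for a $d$-polytope unless the slice is treated as $P$ itself; we set this degenerate case aside, and the top element $P\cap H$ maps to $P\in\Fcal^\circ$.

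\textbf{Step 1: the image and the two compositions.} First we show that for every face $G$ of $P\cap H$ we have $G=\varphi_H(G)\cap H$. This is exactly the content of the proof of \Cref{lem:types-of-faces-in-slices}, where $F=G\cap H$ for the minimal face $G$. Hence $\psi\circ\varphi_H=\mathrm{id}$, which gives injectivity.

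Next, for $F\in\Fcal^\subset\cup\Fcal^\circ$ we show that $\varphi_H(F\cap H)=F$. We know that $F\cap H$ is a face of $P\cap H$, since a supporting hyperplane of $F$ intersected with $P\cap H$ cuts it out. Moreover $F$ is a face containing $F\cap H$. We must check that $F$ is the minimal such face. The key observation is that $F\cap H$ meets $\relint F$: this holds trivially if $F\subset H$, and holds by definition if $F\in\Fcal^\circ$. Any face $F'$ of $P$ containing a relative interior point of $F$ must contain $F$. So the minimal face is $F$, and the image is exactly $\Fcal^\subset\cup\Fcal^\circ$.

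\textbf{Step 2: order.} If $G_1\subseteq G_2$, then $\varphi_H(G_2)$ is a face containing $G_1$, so by minimality $\varphi_H(G_1)\preceq\varphi_H(G_2)$. Conversely, if $F_1\preceq F_2$, then $F_1\cap H\subseteq F_2\cap H$; applying $\psi$ gives $G_1\subseteq G_2$. Since the face lattices are ordered by inclusion, this shows that $\varphi_H$ is an isomorphism onto the induced subposet.

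\textbf{Main obstacle.} The only delicate point is the minimality claim in Step 1, namely that $F\cap H$ contains a relative interior point of $F$. This is precisely why faces in $\Fcal^\partial$ must be excluded. We handle it via the standard fact that the minimal face containing a point is the face having that point in its relative interior.
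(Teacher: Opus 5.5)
Your proof is correct and follows the paper's route: the bijection $\varphi_H$ onto $\Fcal^\subset\cup\Fcal^\circ$ from \Cref{lem:types-of-faces-in-slices}, with order-preservation coming from minimality of $\varphi_H(G)$. It also fills in details the paper leaves implicit, notably the relative-interior argument showing that every $F\in\Fcal^\subset\cup\Fcal^\circ$ is attained as $\varphi_H(F\cap H)$.

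One small slip: the top element $P\cap H$ maps to $P\in\Fcal^\circ$ only when $H$ meets the interior of $P$. If $H$ is a supporting hyperplane, it maps to the face $P\cap H\in\Fcal^\subset$ (and $P\in\Fcal^\partial$), a case your general Step~1 argument already covers.
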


\begin{proof}
    By \Cref{lem:types-of-faces-in-slices} there is a bijection between faces of $P \cap H$ and $\Fcal_P^\subset(H) \cup \Fcal_P^\circ(H)$. Clearly faces $G_1, G_2$ of $P \cap H$, satisfy $G_1 \subseteq G_2$ if and only if $\varphi_H(G_1) \subseteq \varphi_H(G_2)$.
\end{proof}

\begin{example}\label{ex:face-lattice-embedding}

In \Cref{fig:face-lattice-embedding} we give an example of the canonical embedding of the face lattice of a slice of the $3$-cube ($\square_3 \cap H$), into the face lattice of the cube. We draw face lattices without their maximal and minimal elements and denote faces by the string of numbers corresponding to the vertices they contain. In this example the depicted hyperplane contains the faces $\Fcal^\subset_{\square_3}(H) = \{ \emptyset, 1,2, 12 \}$, marked in blue, and properly intersects the faces $\Fcal^\circ_{\square_3}(H) = \{ 56, 67, 1458, 5678, 2367, \square_3 \}$, marked in green. Cover relations of faces in $\square_3 \cap H$ that were not cover relations of faces of the $3$-cube are drawn with dashed lines. Such a new cover relation only appears between an inclusion-maximal face in $\Fcal^\subset$ covered by an inclusion-minimal face in $\Fcal^\circ$, and their dimensions in the $3$-cube differ by two.
    \begin{figure}[ht!]
    \centering
    \begin{subfigure}{0.4\textwidth}
        \centering
        \includegraphics[width=0.7\linewidth]{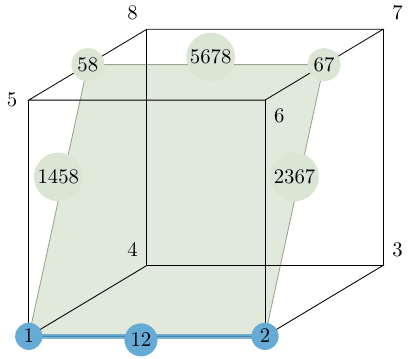}
        \caption{A square slice $\square_3 \cap H$ of the $3$-cube.}
    \end{subfigure}
    \hfill
    \begin{subfigure}{0.55\textwidth}
        \centering
        \includegraphics[width=0.6\linewidth]{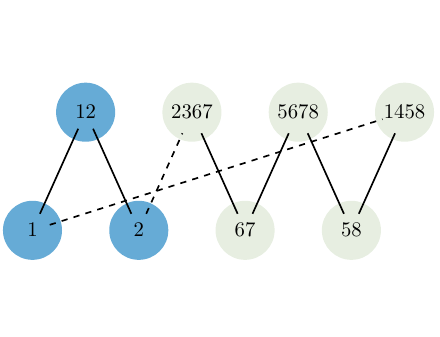}
        \caption{Face lattice of the square slice (without $\square_3 \cap H$ and $\emptyset$).}
    \end{subfigure}
    \\[2em]
    \begin{subfigure}{0.9\textwidth}
        \centering
        \includegraphics[width=.7\linewidth]{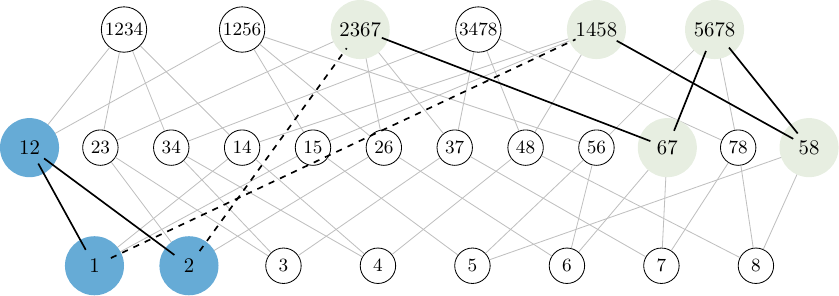}
        \caption{Embedded face lattice of the $\square_3 \cap H$ in the face lattice of the cube (without $\square_3$ and $\emptyset$).}
    \end{subfigure}

    \caption{Canonical embedding of the face lattice of a slice of the $3$-cube (\Cref{ex:face-lattice-embedding})}

    \label{fig:face-lattice-embedding}

\end{figure}
\end{example}

\goodbreak

Conversely, we are interested in understanding which subposets $\Scal$ of $\Lcal(P)$ can arise as the embedded face lattice of a slice of $P$. It is not difficult to see that combinatorially equivalent polytopes need not admit the same combinatorial types of slices (see \Cref{ex:perturbed_bipyramid}), so it is not possible to find sufficient conditions on subposets $\Scal$ of $\Lcal(P)$ such that $P$ has a slice whose face lattice is $\Scal$. Necessary conditions include that $\Scal$ will have to be the face lattice of some $(d-1)$-dimensional polytope, that $\Scal$ has a partition into an up-set and a down-set in $\mathcal{L}(P)$, and that $\Scal$ ``separates'' the graph of $P$ into at most two connected components. To see the third condition: If a hyperplane $H$ properly intersects $P$, then removing those vertices and edges from the graph of $P$ that $H$ intersects, leaves two connected subgraphs. If $H$ is a supporting hyperplane of $P$, then removing the intersected vertices and edges from the graph of $P$, leaves a (smaller) graph with one connected component.

We say that a hyperplane $H$ \emph{geometrically realizes} $\mathcal S$ if for the canonical embedding
$ \varphi_{H}(\mathcal L(P \cap H)) = \Scal$.
We identify a hyperplane $H_u(\beta) = \{x \in \R^d \mid \langle u, x \rangle + \beta =0\}$ with its vector $(u, \beta)$ of parameters. 
The space of hyperplane parameters which geometrically realize $\mathcal S$ is thus
\[
    \mathcal C(\mathcal S) = \{ (u,\beta) \in \R^{d+1} \mid \mathcal  S = \varphi_{H_u(\beta)}(\mathcal L(P \cap H_u(\beta))) \} \ . 
\]

\begin{proposition}\label{lem:realized-by-opp-cones}
    Let $P \subset \R^d$ be a polytope, and let $\mathcal S$ be a subposet of $\mathcal L(P)$. Then the space $\mathcal C(\mathcal S)$ of parameters of hyperplanes which geometrically realize $\Scal$ is a union of two opposite convex cones, i.e., $\mathcal C(\mathcal S) = C \cup (-C)$ for a (non-closed) polyhedral cone $C$.
\end{proposition}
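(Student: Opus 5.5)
We express membership in $\mathcal C(\mathcal S)$ as a finite system of strict inequalities and equalities in $(u,\beta)$, one for each vertex of $P$. The idea is that the embedded lattice $\varphi_{H}(\mathcal L(P\cap H))$ depends only on the sign vector $\sigma(u,\beta) = (\operatorname{sign}(\langle u,v\rangle+\beta))_{v\in\verts(P)} \in \{-,0,+\}^{\verts(P)}$.

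\emph{Step 1: the face sets are determined by the sign vector.} A face $F$ lies in $\Fcal^\subset$ iff all its vertices have sign $0$. Since $F=\conv(\verts(F))$, $F$ lies in $\Fcal^\circ$ iff its vertices carry both signs $+$ and $-$. Indeed, if both signs occur, $H$ separates two vertices of $F$, so it meets $\relint(F)$ without containing $F$. Conversely, if only one nonzero sign occurs (together possibly with zeros), then $H$ weakly supports $F$, so either $F\subset H$ or $H\cap\relint(F)=\emptyset$. By \Cref{prop:face-lattice-of-slice}, the embedded lattice is the induced subposet on $\Fcal^\subset\cup\Fcal^\circ$, so it is a function of $\sigma(u,\beta)$. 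We also note that $\sigma(-u,-\beta)=-\sigma(u,\beta)$, and negating all signs leaves both conditions unchanged. Hence $\mathcal C(\mathcal S)$ is invariant under $x\mapsto -x$ and equals the union of the sets $\{(u,\beta)\mid \sigma(u,\beta)=\tau\}$ over the sign vectors $\tau$ producing $\mathcal S$.

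\emph{Step 2: reduce to essentially one sign vector up to global negation.} Each set $\{\sigma=\tau\}$ is a relatively open polyhedral cone, cut out by linear strict inequalities and equalities in $(u,\beta)$. The key claim is that $\mathcal S$ determines $\tau$ up to global sign. First, the zero set $Z=\{v:\tau_v=0\}$ is exactly the set of vertices in $\mathcal S$ that lie in $\Fcal^\subset$. We can recognise these as the vertices of $P$ appearing as elements of $\mathcal S$: a vertex can never lie in $\Fcal^\circ$, since its single vertex cannot carry both signs. Second, the edges in $\mathcal S\cap\Fcal^\circ$ are exactly the edges of $P$ joining a $+$ vertex to a $-$ vertex. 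Removing $Z$ together with these crossing edges from the graph of $P$ leaves edges only between equal-signed nonzero vertices.

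\emph{Step 3 (main obstacle): consistency of signs.} We must show that the nonzero vertices fall into at most two classes determined by $\mathcal S$, and that the sign assignment is consistent. The subtle case is when $Z$ is large, for instance when $H$ is supporting: the graph argument above then suggests the nonzero vertices might form several components, whose relative signs $\mathcal S$ would not record. I would argue as follows. The vertices with sign $+$ form the vertex set of the polytope $\{x\in P:\langle u,x\rangle+\beta\ge 0\}$ minus $H$, and similarly for $-$. The graph of each such polytope remains connected after deleting the vertices on the facet $H\cap P$, by Balinski-type connectivity applied to the graph of $P$ restricted to an open halfspace, which is connected via the simplex algorithm. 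So the $+$ vertices form a single connected component, and likewise the $-$ vertices. Hence there are at most two components, and $\tau$ is determined up to a global sign, swapping $+$ and $-$.

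\emph{Conclusion.} If $\mathcal C(\mathcal S)$ is nonempty, choose $\tau$ realizing $\mathcal S$ and set $C=\{(u,\beta)\mid \sigma(u,\beta)=\tau\}$. By Steps 2 and 3, $\mathcal C(\mathcal S)=C\cup(-C)$. Here $C$ is a non-closed polyhedral cone: it is convex and closed under positive scaling. If $\mathcal C(\mathcal S)$ is empty, take $C=\emptyset$.
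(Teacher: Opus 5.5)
Your proof is correct and follows essentially the same route as the paper. In both, $C$ is the relatively open cone defined by equalities on the vertices belonging to $\mathcal S$ and by opposite strict inequalities on the (at most two) connected components of the graph of $P$ that remain after deleting those vertices and the edges in $\mathcal S$. Your version is more complete than the paper's: you show that the sign vector determines $\mathcal F^{\subset}$ and $\mathcal F^{\circ}$, and that the vertices in an open halfspace induce a connected subgraph (via monotone paths). Together these justify that every realizing hyperplane induces this same partition up to a global sign swap, which the paper only asserts.
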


\begin{proof}
If no hyperplane exists that geometrically realizes $\mathcal S$, then $\mathcal C(\mathcal S) = \emptyset$ is the union of two empty polyhedral cones. We thus focus on the case where at least one such hyperplane exists. In this case we know that $\Scal$ admits a partition into an up-set $\Ucal$ and down-set $\Dcal$, and furthermore if some hyperplane realizes $\Scal$, then it separates the graph of $P$ into at most two connected components, which can be determined from $\Scal$ itself:
Formally, let $G'$ be the graph on vertex set $V' = \verts(P) \setminus V(\Dcal)$ and edge set $E' = \edges(P) \setminus E(\Scal) \cup \{vw \colon v,w \in V(\Dcal)\}$, where $V(\cdot)$ and $E(\cdot)$ denote the $0$-dimensional and $1$-dimensional faces of the poset $\Scal$, respectively.
If $G'=(V',E')$ has two components, then let $V' = A\sqcup B$ be the separation of the vertices into the two components. If $G'$ has one connected component, then let $V'=A$ and $B = \emptyset$. The space of hyperplanes inducing this separation is thus the union of the cone
\begin{align*}
    C &=
    \left\{(u,\beta) \in \R^{d+1} \ \middle|\ 
    \begin{array}{ll}
    \langle u,v\rangle + \beta > 0 & \text{for all } v \in A, \text{ and}\\
    \langle u,v\rangle + \beta < 0 & \text{for all } v \in B, \text{ and}\\
    \langle u,v\rangle + \beta = 0 & \text{for all } v \in V(\mathcal D)
    \end{array}
    \right\} \\[4pt]
    &= \bigcap_{v \in A} (\sma v \\ 1 \strix ^\perp)^+ \cap \bigcap_{v \in B} (\sma v \\ 1 \strix ^\perp)^- \cap \bigcap_{v \in V(\Dcal)} \sma v \\ 1 \strix ^\perp 
\end{align*}

with its opposite $-C$.
\end{proof}

The results in this section show that every slice determines a distinguished subposet of the face lattice of $P$, namely the image of the canonical embedding $\varphi_H:\Lcal(P\cap H)\to\Lcal(P)$. Such subposets satisfy several necessary conditions: they arise from a partition into faces contained in $H$ and faces properly intersected by $H$, this partition is compatible with the up-set/down-set structure of subposet, and the corresponding hyperplane separates the graph of $P$ into at most two connected components. 

While these conditions are able to provide obstructions, they do not fully characterize which subposets arise as embedded face lattices of slices. In particular, realizability depends not only on the abstract face lattice, but also on the geometric realization of the polytope, and hence on the oriented matroid of its vertices (see \Cref{sec:hierarchies} for a definition). This motivates the following question.

\begin{question}
    Is there a characterization of subposets $\Scal$ of a face lattice $\Lcal$, such that there exists a polytope $P$ with $\Lcal(P) = \Lcal$ and a hyperplane $H$ such that $\varphi_H(\Lcal(P \cap H)) = \Scal$?
\end{question}

\section{Transformations and constructions}\label{sec:constructions}

The combinatorics of slices is highly sensitive to the polytope and the position of the hyperplane, making general statements difficult for arbitrary polytopes. In this section, we focus on settings where such statements are still possible: transformations preserving the combinatorics of slices, and the basic constructions of prisms, generalized prisms, pyramids, and partial results for bipyramids. 

Recall that if $P\subset\R^d$ is a polytope containing the origin in its relative interior, then the \emph{standard bipyramid} over $P$ is the polytope $\bipyr(P) = \conv(P\times \{0\}, \pm e_{d+1})$, and any polytope combinatorially equivalent to $\bipyr(P)$ is called a \emph{bipyramid} over $P$.
The following example illustrates the difficulties already arising when considering combinatorial bipyramids.

\begin{example}[Perturbed Bipyramids]\label{ex:perturbed_bipyramid}
    The following example shows that combinatorially equivalent polytopes need not admit the same combinatorial types of slices, not even when we demand central symmetry and only consider central slices. Let $P = \conv \{ \pm e_1, \pm e_2, \pm (e_1+e_2), \pm e_3, \}$ be the bipyramid over a hexagon, and let
    \[
    Q = \conv \{ \pm e_1,\, \pm e_2,\, \pm(e_1+e_2+\varepsilon e_3), \, \pm e_3  \} 
    \]
    for some small enough $\varepsilon  > 0$. Then all slices of $P$ have at most $8$ vertices, while $Q$ also admits central slices with $12$ vertices, see \Cref{fig:perturbed-bipyramids}. To obtain an analogous example in dimension $d > 3$, we can take iterated bipyramids over this example, as follows. If $P$ and $Q$ are combinatorially equivalent, then so are $\bipyr^{d-3}(P)$ and $\bipyr^{d-3}(Q)$, however if $P \cap u^\perp \not \simeq Q \cap u^\perp $, then we get 
\[\bipyr^{d-3}(P) \cap \sma u \\ 0 \strix ^\perp \simeq \bipyr^{d-3}(P \cap u^\perp) \not \simeq \bipyr^{d-3}(Q \cap u^\perp) \simeq \bipyr^{d-3} (Q) \cap \sma u \\ 0 \strix ^\perp.
   \qedhere \]
\end{example}
    \begin{figure}[H]
    \centering
    \hspace{2.5em}
    \begin{subfigure}[c]{0.45\textwidth}
        \includegraphics[width=0.75\textwidth]{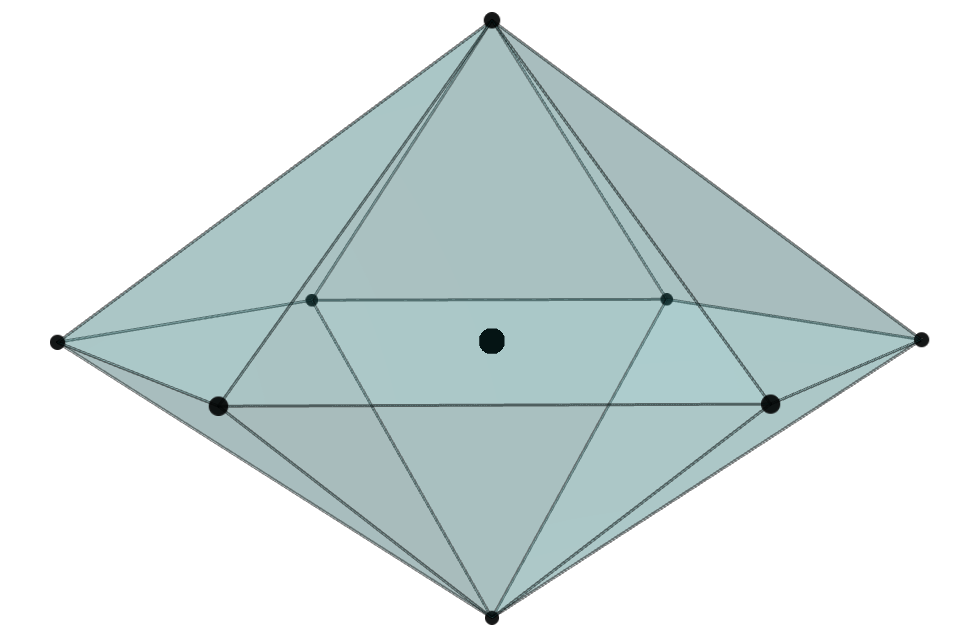}
    \end{subfigure}
    \begin{subfigure}[c]{0.45\textwidth}
         \includegraphics[width=0.75\textwidth]{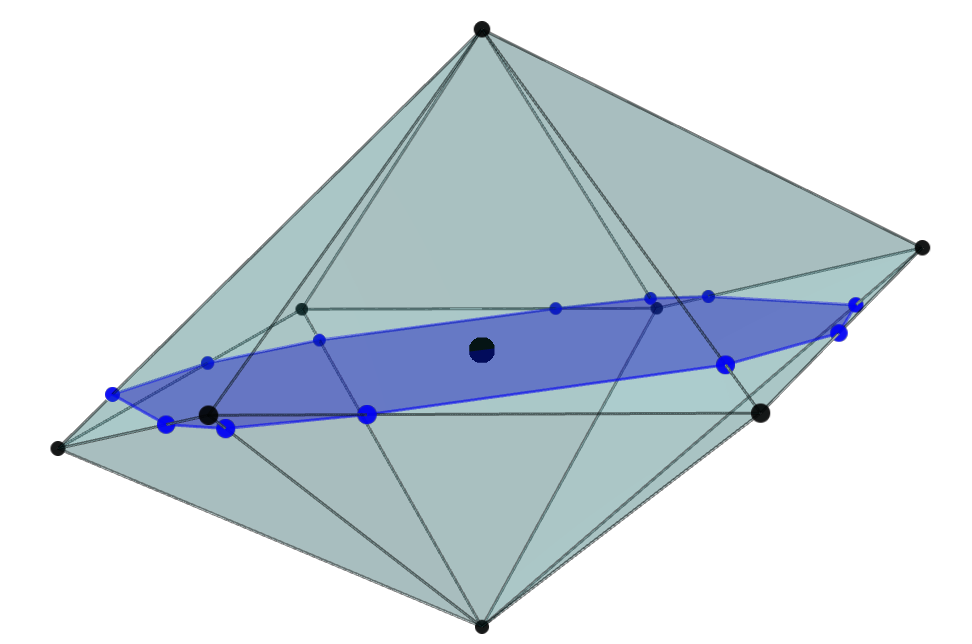}
    \end{subfigure}
    \caption{(Centrally symmetric) polytopes that are combinatorially equivalent but admit different (central) slices (\Cref{ex:perturbed_bipyramid}).}
    \label{fig:perturbed-bipyramids}
\end{figure}

We want to capture transformations of polytopes which preserve the behavior of slices.
 We say that a map $\varphi$ \emph{preserves} (central / affine) slices of a polytope $P$ if $\varphi(P)$ is a polytope, and for every (central / affine) slice $P \cap H$ of $P$ it holds that $\varphi(P \cap H)$ is a (central / affine) slice of $\varphi(P)$ that is combinatorially equivalent to $P \cap H$. We begin with a useful observation based on the fact that for injective maps we have $\varphi(P\cap H) = \varphi(P) \cap \varphi(H)$.

\begin{lemma}\label{lem:affine-transformations}
Injective affine maps preserve affine slices. Injective linear maps preserve both affine and central slices.
\end{lemma}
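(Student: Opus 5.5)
The plan is to unpack the definition of "preserves slices" directly and verify its three requirements: $\varphi(P)$ is a polytope, $\varphi(P\cap H)$ is a slice of $\varphi(P)$, and this slice is combinatorially equivalent to $P\cap H$.

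First, let $\varphi\colon\R^d\to\R^d$ be an injective affine map, $\varphi(x)=Ax+b$ with $A$ invertible. Since the domain and codomain have the same dimension, $\varphi$ is an affine bijection. Hence $\varphi(P)=\conv(\varphi(\verts(P)))$ is again a $d$-dimensional polytope.

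Next I show that $\varphi(H)$ is an affine hyperplane. Write $H=H_u(\beta)=\{x : \langle u,x\rangle+\beta=0\}$. Then
\[
\varphi(H)=\{y : \langle A^{-\top}u, y-b\rangle+\beta=0\},
\]
which is an affine hyperplane since $A^{-\top}u\neq 0$. Because $\varphi$ is injective, $\varphi(P\cap H)=\varphi(P)\cap\varphi(H)$, so $\varphi(P\cap H)$ is an affine slice of $\varphi(P)$.

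For combinatorial equivalence, note that an affine bijection maps faces to faces. It sends a supporting hyperplane of $P\cap H$ to a supporting hyperplane of $\varphi(P\cap H)$, and the same holds for $\varphi^{-1}$. So $F\mapsto\varphi(F)$ is an inclusion-preserving bijection $\Lcal(P\cap H)\to\Lcal(\varphi(P\cap H))$ with an inclusion-preserving inverse. This is a lattice isomorphism, so the two slices are combinatorially equivalent. This handles affine slices for injective affine maps.

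Now let $\varphi$ be linear, i.e.\ $b=0$. Every linear map is affine, so the affine case applies. Moreover, if $H=u^\perp$ passes through the origin, then $\varphi(H)=(A^{-\top}u)^\perp$ also passes through the origin. Hence central slices are mapped to central slices.

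I expect no real obstacle. The only point needing care is injectivity. It is needed for $\varphi(P\cap H)=\varphi(P)\cap\varphi(H)$, since in general only "$\subseteq$" holds. It is also needed for $\varphi(H)$ to be a hyperplane rather than the whole image, and for the face lattice to be preserved. All three follow from $A$ being invertible, which is immediate because the map goes from $\R^d$ to $\R^d$.
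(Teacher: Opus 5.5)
Your proof is correct and follows the paper's argument: injectivity gives $\varphi(P\cap H)=\varphi(P)\cap\varphi(H)$, and an invertible affine map sends hyperplanes to hyperplanes. You additionally spell out two things the paper leaves implicit: the combinatorial equivalence of $P\cap H$ and $\varphi(P\cap H)$, and the fact that linear maps send central hyperplanes to central hyperplanes.
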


\begin{proof}
    For an injective map  $\varphi$ we have $\varphi(P \cap H)=\varphi(P) \cap \varphi(H)$ for any polytope $P$ and any affine hyperplane $H$. If $\varphi$ is affine, then $\varphi(H)$ is an affine hyperplane for any affine hyperplane $H$, so $\varphi(P\cap H) = \varphi(P) \cap \varphi(H)$ is a slice of the polytope $\varphi(P)$.
\end{proof}

We now show that a similar statement holds for projective transformations of polytopes. Recall that a projective transformation is a rational linear map 
$\varphi$
defined by \(A \in \R^{d \times d}\), \(b, c \in \R^d\), where

\[
\varphi(x) = \frac{A x + b}{c^\top x + \gamma},  \qquad \text{such that} \qquad
\det \begin{pmatrix} 
A & b \\
c^\top & \gamma
\end{pmatrix} \neq 0  \ .
\]

Note that $\varphi$ is only defined on $\R^d \setminus L$, where $L = H_{c}(\gamma)$, and that $\varphi$ is injective on this domain. A projective transformation is \emph{permissible} for a polytope $P$ if $c^\top x + \gamma \neq 0$ for all $x \in P$. 

\begin{theorem}
    Permissible projective transformations preserve affine slices.
    \label{th:projective-transformations}
\end{theorem}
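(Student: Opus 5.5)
The plan is to reduce the statement to the same mechanism used in \Cref{lem:affine-transformations}: injectivity gives $\varphi(P\cap H)=\varphi(P)\cap\varphi(H)$, and $\varphi$ maps hyperplanes to hyperplanes. The only additional work is to control the behaviour of $\varphi$ near the exceptional hyperplane $L=H_c(\gamma)$. Permissibility means $P\cap L=\emptyset$. Since $P$ is convex and connected, the affine function $c^\top x+\gamma$ has constant sign on $P$. Replacing $(A,b,c,\gamma)$ by $(-A,-b,-c,-\gamma)$ does not change $\varphi$, so we may assume $c^\top x+\gamma>0$ on an open half-space $U\supset P$. We then work entirely on $U$.

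First, I show that $\varphi$ maps affine hyperplanes to affine hyperplanes, restricted to $U$. Take $H=\{x: a^\top x+\alpha=0\}$. In homogeneous coordinates, $\varphi$ is the linear map $M=\begin{pmatrix}A&b\\c^\top&\gamma\end{pmatrix}$ followed by dehomogenization, and $M$ is invertible. Writing $y=\varphi(x)$, we have $(y,1)=M(x,1)/(c^\top x+\gamma)$. Hence $x\in H$ if and only if $(a^\top,\alpha)M^{-1}(y,1)^\top=0$. This is the equation of an affine hyperplane $H'$ in $y$, provided the row vector $(a^\top,\alpha)M^{-1}$ is not a multiple of $(0,\dots,0,1)$. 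That degenerate case happens exactly when $H$ is parallel to $L$ and mapped to infinity, that is, when $H=L$. In that case $H\cap P=\emptyset$, so the slice is empty and its image is trivially the empty slice. Thus $\varphi(H\cap U)=H'\cap\varphi(U)$.

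Second, I verify that $\varphi(P)$ is a polytope and that $\varphi$ respects convexity on $U$. The key observation is that $\varphi$ maps segments in $U$ to segments. One way to see this is to use the previous step in dimension one: lines through points of $U$ map to lines. Alternatively, one can compute directly that $\varphi((1-t)x+ty)$ is a convex combination of $\varphi(x)$ and $\varphi(y)$ with the reparametrized weight
\[
s=\frac{t(c^\top y+\gamma)}{(1-t)(c^\top x+\gamma)+t(c^\top y+\gamma)}\in[0,1],
\]
which is monotone in $t$ because both denominators are positive. Consequently $\varphi(\conv V)=\conv\varphi(V)$ for any finite $V\subset U$. In particular, $\varphi(P)$ is a polytope, and for every affine hyperplane $H$ the set $\varphi(P\cap H)=\varphi(P)\cap H'$ is a slice of $\varphi(P)$, using injectivity of $\varphi$ on $U$.

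Third, I show that $\varphi(P\cap H)$ is combinatorially equivalent to $P\cap H$. Since $\varphi$ is an injective, convexity-preserving map from $U$ onto the convex set $\varphi(U)$, and its inverse is again a projective map, it preserves faces of any polytope $Q\subset U$. A face is $Q\cap G$ for a supporting hyperplane $G$ with $Q$ on one side. The image $\varphi(G)$ is a hyperplane by the first step. The two open sides of $G$ within $U$ are convex and connected, and they map onto the two sides of $\varphi(G)$ within $\varphi(U)$. Hence $\varphi(G)$ supports $\varphi(Q)$ in the face $\varphi(Q\cap G)$, and the same argument applies to $\varphi^{-1}$. This yields an inclusion-preserving bijection of face lattices; applying it with $Q=P\cap H$ gives the result.

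I expect the main obstacle to be this last face-preservation step. In particular, it needs a careful argument that sides of hyperplanes map to sides, which is where the positivity of $c^\top x+\gamma$ on $U$ is essential. The statement is genuinely false if $P$ meets $L$, so the proof must use permissibility exactly here.
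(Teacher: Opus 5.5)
Your proof is correct and follows the same route as the paper. You show that $\varphi(H\cap L^+)$ is a hyperplane $H'$ intersected with $\varphi(L^+)$, use injectivity and $P\subset L^+$ to get $\varphi(P\cap H)=\varphi(P)\cap H'$, and conclude because $\varphi$ is still permissible for $P\cap H$, so it preserves that polytope's combinatorial type. The only difference is that you prove the standard supporting facts from scratch (hyperplanes map to hyperplanes, segments to segments, faces to faces), whereas the paper takes them for granted.
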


\begin{proof}
    If $P$ is a polytope and $\varphi$ is a permissible projective transformation for $P$, then $\varphi(P)$ is a polytope combinatorially equivalent to $P$. 
   Let $L=\{x \mid c^\top x + \gamma = 0\}$, and $H \neq L$ be some affine hyperplane. Then the closure $\overline{\varphi(H\setminus L)}$ is an affine hyperplane. If $L^+ = \{x \mid c^\top x + \gamma > 0 \}$ is the open halfspace containing $P$, then $\varphi(H \setminus L) \cap \varphi(L^+)= \overline{\varphi(H \setminus L)} \cap \varphi(L^+)$.
   By injectivity, and with $P \subset L^+$ we get
   \[
   \varphi(P \cap H) = \varphi(P) \cap \varphi(H \setminus L) = \varphi(P) \cap \overline{\varphi(H \setminus L)} \ .
   \]
This shows that $\varphi(P \cap H)$ is a slice of $\varphi(P)$.
Since $\varphi$ is still permissible for the polytope $P \cap H$, it follows that $P \cap H \simeq \varphi(P \cap H)$, which proves the claim.
\end{proof}

\begin{remark}
    Linear, affine and projective transformations are fairly natural slice-preserving maps, which preserve the combinatorics of both $P$ and its slices. A less intuitive map is as follows:
    Let $P$ be a polytope with vertices $v_1,\dots,v_n$, containing the origin in its interior. Let $\varepsilon_i > 0$ and let $Q = \conv(\varepsilon_1 v_1,\dots,\varepsilon_n v_n)$. Then there is a continuous deformation $\varphi$ such that $\varphi(\zero) = \zero$, $\varphi(v_i) = \varepsilon_i v_i$ and $\varphi(P) = Q$ which leaves hyperplanes through the origin invariant. In some cases, such a map is slice-preserving, even if $P$ and $Q$ have distinct combinatorial types. See \Cref{ex:pointwise_cse} for an example of $P$ and $Q$.
\end{remark}

\label{subsec:constructions}

We now turn our attention from transformations to investigating slices of standard constructions of polytopes, namely, prisms, pyramids, and bipyramids. Our goal is to describe the slices of these constructions in terms of the base polytope. 
We begin with prisms and generalized prisms. Recall that the (standard) \emph{prism} over a polytope $P \subset \R^d$ is the polytope $\prism(P) = P \times [-1,1] = \conv(P\times \{1\}, P \times \{-1\})\subset \R^{d+1}$.
As we will show, slices of prisms are given by slabs of $P$, which are intersections of $P$ with a region bounded by two parallel hyperplanes.

\begin{definition}[Slabs]
    A \emph{slab} of a polytope $P \subset \R^d$ is the polytope
    $$\slab_{u, \beta, t}(P) := P \cap \left\{x \in \R^d \ \middle| \  | \scalar{x}{u} + \beta | \leq t \right\}$$
    for some $u \in \R^d$ and $\beta, t \in \R$, where $t \geq 0$.
\end{definition}

For a hyperplane $H = H_u(\beta) = \{ x \in \R^d \mid \langle x,u \rangle + \beta = 0\}$ we define the open halfspace $H^+ := \{x \in \R^d \mid \langle u, x \rangle + \beta > 0 \}$, and the closed halfspace is then $\overline{H^+}$. In this notation, we have
\[
    \slab_{u, \beta, t}(P) = P \cap \overline{H_{u}(\beta+t)^+} \cap  \overline{H_{u}(\beta-t)^-} \ .
\]

\begin{proposition}[Affine slices of prisms]
    Affine slices of $\prism(P)$ are combinatorially equivalent to slabs of $P$ or to prisms over affine slices of $P$.
    \label{prop:prism-slices}
\end{proposition}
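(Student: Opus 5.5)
The plan is to write an arbitrary affine hyperplane in $\R^{d+1}$ as $H = \{(x,s) \in \R^d\times\R \mid \langle u,x\rangle + \alpha s + \beta = 0\}$ with $(u,\alpha)\neq 0$, and to split into cases according to whether $u=0$, $\alpha = 0$, or both $u\neq 0$ and $\alpha\neq 0$. The idea is that a slice of $\prism(P)$ is controlled by the projection $\pi\colon(x,s)\mapsto x$ onto the first $d$ coordinates. Whenever $\alpha\neq 0$, this projection is injective on $H$ and maps $\prism(P)\cap H$ onto a slab of $P$; when $\alpha = 0$, the hyperplane is ``vertical'', so the slice is itself a prism.

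\emph{Case $\alpha=0$.} Here $H = H_u(\beta)\times\R$, so $\prism(P)\cap H = (P\cap H_u(\beta))\times[-1,1]$. This is a prism over an affine slice of $P$; if the slice is empty or degenerate, it is still a prism over the slice in the obvious sense.

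\emph{Case $u=0$.} Then $H=\{s=-\beta/\alpha\}$, and the slice is $P\times\{s_0\}$ if $|s_0|\le 1$ and empty otherwise. This is a slab of $P$ with $u=0$, where $t$ is chosen so that the condition $|\beta|\le t$ holds or fails accordingly. The empty slab can also be obtained as $P\cap\{|\langle x,u\rangle+\beta|\le t\}$ with $\beta$ large, so the empty slice is covered as well.

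\emph{Case $\alpha\neq 0$.} Solve for the last coordinate: $s = -(\langle u,x\rangle+\beta)/\alpha$. Then $\pi$ restricted to $H$ is an affine bijection $H\to\R^d$, with inverse $x\mapsto (x, -(\langle u,x\rangle+\beta)/\alpha)$. Under this bijection, $\prism(P)\cap H$ maps to
\[
\{x\in P \mid |\langle u,x\rangle+\beta|\le |\alpha|\} = \slab_{u,\beta,|\alpha|}(P).
\]
Since injective affine maps preserve the combinatorial type of polytopes (as in \Cref{lem:affine-transformations}, or directly), the slice is combinatorially equivalent to this slab. Rescaling $(u,\beta,\alpha)$ by a positive scalar shows that there is no loss in the normalization.

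The main subtlety is bookkeeping rather than difficulty. First, we need to make sure that the identification $H\cong\R^d$ is a genuine affine isomorphism, so that combinatorial equivalence holds and not merely an equality of images. Second, we should check that the degenerate outputs (empty or lower-dimensional slices) are covered by the stated families. Both points amount to unwinding the definitions, so I expect the case $\alpha\neq 0$ computation to be the only substantive step.
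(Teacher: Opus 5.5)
Your proposal is correct and follows essentially the same route as the paper. You split on whether the last coordinate $\alpha$ of the normal vanishes: in the vertical case you get a prism over an affine slice of $P$, and otherwise the projection onto the first $d$ coordinates is an affine bijection on $H$ mapping the slice onto $\slab_{u,\beta,|\alpha|}(P)$ (your separate case $u=0$ is already contained in the case $\alpha\neq 0$).
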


\begin{proof}
    We consider $\prism(P) \cap H_u(\beta)$ and denote $u = (u',u_{d+1})$. Without loss of generality $u_{d+1} \geq 0$. If $u_{d+1} = 0$, then
    \[
        \prism(P) \cap H_u(\beta)
        = 
        \{(x,y) \in P \times [-1,1] \mid \langle x,u'\rangle + \beta = 0\} = \prism(P \cap H_{u'}(\beta))\ ,
    \]
    so the slice is a prism over an affine slice of $P$.
    We now show that if $u_{d+1} > 0$, then the projection of $\prism(P) \cap H_u(\beta)$ to the first $d$ coordinates is $\slab_{u',\beta,u_{d+1}}(P)$. Let $\pi: (x_1,\dots,x_d,x_{d+1}) \mapsto (x_1,\dots,x_d)$ denote the projection onto the first $d$ coordinates. Since $u_{d+1} \neq 0$, the restriction $\pi|_{H_u(\beta)} : H_u(\beta) \to \R^d$ is a bijection. 
    Since $\prism(P) = \{(x,\mu) \mid x \in P, \mu \in [-1,1]\}$, we obtain
    \begin{align*}
        (x,\mu) \in \prism(P) \cap H_{u}(\beta) &\iff 
        x \in P, \mu \in [-1,1] \text{ and } \langle x, u' \rangle + \mu u_{d+1} + \beta = 0 \\ &\iff
        x \in P \text{ and } \mu = -\frac{\langle x,u'\rangle + \beta}{u_{d+1}} \in [-1,1] \\ &\iff
        x \in P \text{ and } \langle x,u'\rangle + \beta \in [-u_{d+1},u_{d+1}] \\ &\iff x \in \slab_{u',\beta,u_{d+1}}(P) \ .
    \end{align*}
    It follows that $\pi(\prism(P)\cap H_u(\beta)) = \slab_{u',\beta,u_{d+1}}(P)$. Since $\pi$ is linear and bijective when restricted to $H_u(\beta)$, the slice of $\prism(P)$ is combinatorially equivalent to a slab of $P$.
\end{proof}

\begin{remark}[Affine slices of generalized prisms]\label{prop:generalized-prism-slices}
    For a polytope $P$ containing the origin in its relative interior, we can generalize the notion of prisms to define a \emph{generalized prism} over $P$ to be ${\prism(P, \lambda, \delta) = \conv( \delta P \times \{-1\}, \lambda P\times\{1\})}$ for some $\lambda, \delta >0$.
    This construction will be needed in \Cref{sec:slicing-problems}.
    We note that any such generalized prism is the image of a prism under the permissible projective transformation 
    $$
    (x,t) \mapsto \frac{1}{(\delta - \lambda) t + \delta + \lambda} (2 \delta \lambda x, (\delta + \lambda)t + \delta - \lambda) \ .
    $$
    $$(x_1, \ldots, x_{d+1}) \mapsto (x_1 + \frac{1}{2}(\lambda - \delta)x_{d+1} + \frac{1}{2}(\lambda + \delta)-1, \ldots, x_d + \frac{1}{2}(\lambda - \delta)x_{d+1} + \frac{1}{2}(\lambda + \delta)-1, x_d) \ .$$
    Thus, \Cref{th:projective-transformations} and \Cref{prop:prism-slices} imply that affine slices of generalized prisms are combinatorially equivalent to slabs of $P$ or to generalized prisms over affine slices of $P$.
\end{remark}

However, the following example shows that the description of \Cref{prop:prism-slices} does not hold for all combinatorial prisms.

\begin{example}[Slices of prisms depend on the geometric realization]
    Consider the $3$-dimensional combinatorial bipyramids $P$ and $Q$ from \Cref{ex:perturbed_bipyramid}. Since $P \simeq Q$, we have $\prism(P) \simeq \prism(Q)$. Thus, $\prism(Q)$ is also a combinatorial prism over $P$. We now argue that $\prism(Q)$ has a slice which is neither a slab of $P$ nor a prism over a slice of $P$. Let $H \subset \R^3$ be a hyperplane such that $Q \cap H$ is a polygon with $12$ vertices. Then $H' = H \times \R$ is a hyperplane in $\R^4$, and $\prism(Q) \cap H' = \prism(Q \cap H)$ is a prism over a $12$-gon. It has $14$ facets and two of its facets are $12$-gons. 
    To show that $\prism(Q) \cap H'$ is not combinatorially equivalent to a prism over a slice of $P$, note that slices of $P$ have at most $8$ vertices, so a prism over a slice of $P$ has at most $10$ facets. To show that $\prism(Q) \cap H'$ is not combinatorially equivalent to a slab of $P$, we claim that slabs of $P$ have facets with at most $8$ vertices, hence can't be combinatorially equivalent to a prism over a $12$-gon. Indeed, facets of slabs of $P$ are either slabs of facets of $P$, or slices of $P$. Facets of $P$ are triangles, so slabs of triangles have at most $5$ vertices. Slices of $P$ have at most $8$ vertices. Thus  \Cref{prop:prism-slices} does not hold for combinatorial prisms in general.
\end{example}

We next discuss another classic polytope construction, namely pyramids. Recall that the \emph{(standard) pyramid} over a polytope $P \subset \R^d$ is the polytope $\pyr(P) = \conv( P \times \{0\}, e_{d+1}) \subset \R^{d+1}$, and any polytope which is combinatorially equivalent to $\pyr(P)$ is a \emph{pyramid} over $P$. Since a standard pyramid differs from a prism only by collapsing one of the two copies of the base to a single point, the description of its slices is closely related to the prism case. Moreover, any combinatorial pyramid with base facet $F$ is affinely equivalent to $\pyr(F)$. Thus, by \Cref{lem:affine-transformations}, the following characterization of slices of pyramids applies to all combinatorial pyramids.

\begin{proposition}[Affine slices of pyramids]\label{prop:pyramid-slices}
    Let $\pyr(P)$ be the pyramid over a polytope $P \subset \R^d$. Affine slices of $\pyr(P)$ are combinatorially equivalent to a slab of $P$ or to the pyramid over an affine slice of $P$.
\end{proposition}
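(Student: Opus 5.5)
The plan is to imitate the proof of \Cref{prop:prism-slices}, realizing $\pyr(P)$ as a degenerate prism. Write $\pyr(P) = \{(\mu x, 1-\mu) \mid x \in P,\ \mu \in [0,1]\}$, i.e.\ the points at height $t = 1-\mu$ form the scaled copy $(1-t)P$. Consider the slice $\pyr(P)\cap H_u(\beta)$ with $u = (u', u_{d+1})$, and split into cases according to whether the apex $e_{d+1}$ lies on $H$.

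\textbf{Case 1: the apex lies in $H$,} i.e.\ $u_{d+1} + \beta = 0$. Here $H$ is spanned by the apex together with the set $H' = \{x \in \R^d \mid \langle x, u'\rangle + \beta = 0\}$ in the base hyperplane. (If $u' = 0$, then $\beta = 0$ forces $u_{d+1} = 0$, which is impossible, so $u' \neq 0$ and $H'$ is a genuine hyperplane.) Every point of $\pyr(P)$ lies on a segment from the apex to a point $x \in P\times\{0\}$. Such a segment lies in $H$ exactly when $x \in H'$, and otherwise meets $H$ only at the apex. Hence
\[
    \pyr(P)\cap H = \conv\big((P\cap H')\times\{0\},\, e_{d+1}\big),
\]
which is a pyramid over an affine slice of $P$. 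This includes the degenerate case $P\cap H' = \emptyset$, where the slice is just the apex, and the case $u_{d+1}=0$ cannot occur here.

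\textbf{Case 2: the apex does not lie in $H$.} I will apply a permissible projective transformation that sends the apex to infinity and turns the pyramid into an unbounded prism-like object. This is awkward because the image is unbounded, so I prefer a direct computation instead. A point of the pyramid is $(\mu x, 1-\mu)$, and it lies in $H$ iff
\[
    \mu\,(\langle x,u'\rangle - u_{d+1}) + u_{d+1} + \beta = 0 .
\]
Since $u_{d+1}+\beta \neq 0$, we can set $c = u_{d+1}+\beta$. Then for each $x$ this equation has at most one solution $\mu = -c/(\langle x, u'\rangle - u_{d+1})$, and we need $\mu \in [0,1]$. So the radial projection from the apex onto the base,
\[
    (y, t) \mapsto y/(1-t),
\]
maps the slice bijectively onto the set of $x \in P$ with $\langle x,u'\rangle - u_{d+1}$ having sign opposite to $c$ and absolute value $\ge |c|$. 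This set is $P$ intersected with a single closed halfspace, which is a slab $\slab_{u',\beta',t}(P)$ with $t$ chosen large enough that the other bounding hyperplane misses $P$.

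The radial projection is a projective map that is permissible on the slice, since the slice avoids the apex, which is the only place where $1-t = 0$. By \Cref{th:projective-transformations}, or simply because projective maps preserve combinatorial type on their domain, the slice is combinatorially equivalent to this slab.

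\textbf{Main obstacle.} The step needing the most care is verifying that the image in Case 2 is exactly a one-sided cut of $P$, and handling the sign bookkeeping. This includes the subcase $u'=0$, where $H$ is horizontal and the slice is a scaled copy of $P$, i.e.\ the trivial slab with $t$ large. One also has to confirm that the radial map restricted to $H$ is a projective isomorphism onto $\R^d$ minus a hyperplane, so that combinatorial type is preserved.
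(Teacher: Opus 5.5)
Your argument is correct, but it takes a different route from the paper. The paper first separates slices that miss the relative interior of $\pyr(P)$ from slices through the interior. The former are faces: base faces are slabs cut out by supporting hyperplanes, faces through the apex are $\pyr(F)$, and the apex alone is a vertex and hence a slab. Interior slices through the apex give $\pyr(P\cap H)$. For interior slices missing the apex, the paper truncates the pyramid just below the apex, views the slice as a slice of a generalized prism, and appeals to \Cref{prop:prism-slices} via the projective map of \Cref{prop:generalized-prism-slices}.

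You instead split only on whether the apex lies on $H$. In the apex-free case you use the central projection from the apex. This gives a permissible projective equivalence between the slice and $P\cap\{x \mid c\,(\langle x,u'\rangle+\beta)\le 0\}$; your condition $\mu\in(0,1]$ simplifies to exactly this.

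Your route buys three things:
\begin{enumerate}
\item A uniform treatment in which face slices need no separate analysis.
\item It avoids a step the paper does not spell out: a hyperplane missing the apex must pull back to a non-vertical hyperplane of the prism, so that one gets a slab rather than a prism over a slice.
\item A sharper statement: only one-sided truncations of $P$ occur. For example, slices of a square pyramid have at most five vertices, whereas a slab of a square can be a hexagon.
\end{enumerate}
The paper's route, in exchange, reuses the prism machinery and fits its viewpoint of pyramids as degenerate prisms.

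One small slip: the parenthetical in your Case~1 claiming $u'\neq 0$ is false. Taking $u'=0$, $u_{d+1}=1$, $\beta=-1$ gives $H=\{x \in \R^{d+1} \mid x_{d+1}=1\}$, which contains the apex. Then $H'=\emptyset$ and the slice is just the apex. Your degenerate case already covers this. It is cleaner, though, to say that a single point is a slab of $P$ (a vertex cut out by a supporting hyperplane, with $t=0$) than to call it a pyramid over the empty slice.
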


\begin{proof}
    Let $H$ be a hyperplane in $\R^{d+1}$. 
    If $H$ does not intersect the relative interior of $\pyr(P)$, then $\pyr(P) \cap H$ is a face of $\pyr(P)$. If this face is contained in the base $P \times \{0\}$, then it is a face $F$ of $P$, and hence a slab of $P$ obtained by intersecting $P$ with a supporting hyperplane. If the face contains the apex $e_{d+1}$ and is not the apex itself, then it is of the form $\pyr(F)$ for some face $F$ of $P$. Since $F$ is an affine slice of $P$, this is a pyramid over an affine slice of $P$. Finally, if $\pyr(P) \cap H = \{e_{d+1}\}$, then the slice is combinatorially equivalent to a vertex of $P$, and hence to a slab of $P$. 
    
    Next, suppose  that $H$ intersects the relative interior of $\pyr(P)$. If $H$ contains the apex $e_{d+1}$ of the pyramid, then $\pyr(P) \cap H = \pyr(P \cap H)$ is a pyramid over a slice of $P$.
    Otherwise, there exists $\lambda$ such that $\pyr(P) \cap H$ is combinatorially equivalent to a slice of $\prism(P,\lambda)$, that is, we can cut off the apex and obtain the slice as a slice of a generalized prism (up to dilation and translation). From the proof of \Cref{prop:prism-slices} and \Cref{prop:generalized-prism-slices} it follows that $\pyr(P) \cap H$ is combinatorially equivalent to a slab of $P$.
   \end{proof}

In contrast to pyramids, for bipyramids the geometric realization again influences the slicing-behavior. Indeed, as illustrated by
\Cref{ex:perturbed_bipyramid}, combinatorially equivalent bipyramids can admit different combinatorial types of slices.
Moreover, the combinatorics of slices may change if the base polytope is fixed and only one of the apices is perturbed.
Thus, as for prisms, statements about a standard bipyramid do not automatically extend to all combinatorial realizations. Rather than giving a complete description of the slices of standard bipyramids, we focus here on a lower bound for the face numbers of their central slices in terms of the base polytope. This will be used in the construction of the counterexample to the combinatorial Bourgain's slicing problem in \Cref{sec:slicing-problems}.

We denote by $f(P) = (f_{-1}(P), f_0(P),\dots,f_{d-1}(P))$ the $f$-vector of $P$ with $d = \dim(P)$, whose entries  $f_k(P)$ record the number of $k$-dimensional faces of $P$, and $f_{-1}(P)=1$ counts the empty face.
Note that the $f$-vector of the bipyramid satisfies
\[
f_k(\bipyr(P)) = \begin{cases}
    f_k(P) + 2f_{k-1}(P) & \text{for } 0 \leq k \leq d-1 \\
    2f_{k-1}(P) & \text{for } k=d \ .
\end{cases}
\]

\begin{proposition}[Central slices of bipyramids]
\label{prop:slices_bip}
Let $P \subset \R^d$ be a $d$-dimensional polytope containing the origin in its interior. For $0 \leq k \leq d-2$, let $s_k \in \N$ be a constant such that for all $v \in \Sph^{d-1}$ it holds that $f_k(P \cap v^\perp) \geq s_k$. Formally, we define $s_{-1} = 1$ and $s_{d-1}=0$. Then for all $0 \leq k \leq d-1$ and all $u \in \Sph^{d}$ we get $f_k(\bipyr(P) \cap u^\perp) \geq \min(f_k(P), s_k +2s_{k-1})$.
\end{proposition}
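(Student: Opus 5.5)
Write $u=(u',u_{d+1})\in\Sph^d$ with $u'\in\R^d$, and split into cases according to how the hyperplane $u^\perp\subset\R^{d+1}$ meets the two apices $\pm e_{d+1}$. Since $u^\perp$ passes through the origin and $\bipyr(P)$ is centrally symmetric in the last coordinate only through the apices, there are two situations: either $u^\perp$ contains both apices ($u_{d+1}=0$) or it contains neither ($u_{d+1}\neq 0$).

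\textbf{Case $u_{d+1}=0$.} Here $u'\neq 0$, and the slice $\bipyr(P)\cap u^\perp$ equals $\conv\bigl((P\cap u'^\perp)\times\{0\},\pm e_{d+1}\bigr)=\bipyr(P\cap u'^\perp)$. This uses that $P\cap u'^\perp$ contains the origin in its relative interior, so the bipyramid over it is a genuine bipyramid. I will check this equality directly: every point of $\bipyr(P)$ has the form $\lambda x+\mu_+e_{d+1}-\mu_-e_{d+1}$ with $x\in P$, and the condition $\langle u',x\rangle=0$ cuts out exactly the bipyramid over the slice. By the $f$-vector formula for bipyramids, $f_k(\bipyr(P)\cap u^\perp)=f_k(P\cap u'^\perp)+2f_{k-1}(P\cap u'^\perp)\geq s_k+2s_{k-1}$ for $k\le d-2$. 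For $k=d-1$ the formula gives $2f_{d-2}(P\cap u'^\perp)\ge 2s_{d-2}=s_{d-1}+2s_{d-2}$, using the convention $s_{d-1}=0$. For $k=0$ the convention $s_{-1}=1$ correctly accounts for the two apices.

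\textbf{Case $u_{d+1}\neq 0$.} Assume $u_{d+1}>0$. The hyperplane $u^\perp$ then separates $e_{d+1}$ from $-e_{d+1}$. The vertices of $\bipyr(P)$ lying in or beyond $u^\perp$ come from $P\times\{0\}$ and the apices, and neither apex lies on $u^\perp$. The projection $\pi$ to the first $d$ coordinates is a bijection on $u^\perp$, exactly as in \Cref{prop:prism-slices}. I plan to show that $\pi(\bipyr(P)\cap u^\perp)$ is combinatorially equivalent to $P$, by showing that every face of $P$ is properly intersected by exactly one face of the bipyramid that the hyperplane meets. For each face $F$ of $P$, the faces of $\bipyr(P)$ properly met by $u^\perp$ and not contained in the equator are $\conv(F,e_{d+1})$ and $\conv(F,-e_{d+1})$. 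A face $F\times\{0\}$ of the equator lies strictly on one side of $u^\perp$, or meets it. The key geometric point is that $\langle u,(x,0)\rangle=\langle u',x\rangle$ while $\langle u,\pm e_{d+1}\rangle=\pm u_{d+1}$. So for each $F$, exactly one of $\conv(F,\pm e_{d+1})$, or $F\times\{0\}$ itself, gives a face of the slice, and the slice is combinatorially a ``radial image'' of $P$. Concretely, I expect the slice to be the image of $P$ under the map sending $x$ to the intersection of $u^\perp$ with the segment from $x$ to the apex on the opposite side. This is a piecewise projective map whose pieces agree on $P\cap u'^\perp$.

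The faces of the slice then correspond bijectively to the faces of $P$ with dimension preserved, via \Cref{lem:types-of-faces-in-slices}: a face $\conv(F,\pm e_{d+1})$ of dimension $\dim F+1$ gives a face of dimension $\dim F$. The only exceptional faces are those of $P$ properly cut by $u'^\perp$. There, the two pieces $F\cap\{\langle u',x\rangle\ge0\}$ and $F\cap\{\langle u',x\rangle\le 0\}$, lifted via the opposite apices, might fail to be coplanar and so split $F$ into two faces of the slice. That can only increase the face count. In all cases, therefore, $f_k(\bipyr(P)\cap u^\perp)\geq f_k(P)$.

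\textbf{Main obstacle.} The main work is making this second case rigorous: establishing the face correspondence via \Cref{prop:face-lattice-of-slice}, and verifying that the map from faces of $P$ to faces of the slice is injective and dimension-preserving. A cleaner route I would try first is a direct argument with \Cref{prop:face-lattice-of-slice}. Assign to each $k$-face $F$ of $P$ a distinct $k$-face of the slice. If $\langle u',\cdot\rangle$ is nonzero somewhere on $F$, use $\conv(F,-e_{d+1})\cap u^\perp$ or $\conv(F,e_{d+1})\cap u^\perp$ according to the sign. If $F\subset u'^\perp$, use $F\times\{0\}\in\Fcal^\subset$. Injectivity then needs a careful check, and I expect it to be the main work of the proof. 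Combining the two cases yields the claimed bound $\min(f_k(P),s_k+2s_{k-1})$.
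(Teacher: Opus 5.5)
Your final ``cleaner route'' is essentially the paper's proof. Case~1 is identical, and in Case~2 the paper also maps each $k$-face $F$ of $P$ to $\conv(F,\pm e_{d+1})\cap u^\perp$ for a suitable sign, which is just $F$ when $F\subset (u')^\perp$, i.e.\ your $\Fcal^\subset$ case; so the approach is correct. The injectivity you flag as the main work is immediate from \Cref{prop:face-lattice-of-slice}, because the chosen faces of $\bipyr(P)$ are pairwise distinct elements of $\Fcal^\subset\cup\Fcal^\circ$; drop the radial-image heuristic, since the slice is generally not combinatorially equivalent to $P$ (generic central slices of the octahedron are hexagons).
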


\begin{proof}
    We know that $f_k(\bipyr(Q)) = f_k(Q)+2f_{k-1}(Q)$ for any $d$-polytope $Q$ and $0 \leq k \leq d-1$, and $f_{d}(\bipyr(Q)) = 2f_{d-1}(Q)$.
    Let $u \in \Sph^{d}$. If $e_{d+1} \in u^\perp$, then $\bipyr(P) \cap u^\perp=\bipyr(P \times \{0\} \cap u^\perp)$, i.e., this slice is a bipyramid over a slice of $P$. Since the origin lies in the relative interior of $P$, the slice $(P \times \{0\}) \cap u^\perp$ has dimension $d-1$. Thus, we have
    \[ f_k(\bipyr(P) \cap u^\perp) =  f_k(P \times \{0\} \cap u^\perp) + 2 f_{k-1}(P \times \{0\} \cap u^\perp) \geq s_k + 2s_{k-1}\]
    for $0 \leq k\leq d-2$, by our premises. For $k=d-1$, we have
    \[
    f_{d-1}(\bipyr(P) \cap u^\perp)
    =
    2 f_{d-2}(P \times \{0\} \cap u^\perp)
    \geq
    2s_{d-2}
    =
    s_{d-1}+2s_{d-2} \ .
    \]
    If $e_{d+1} \not \in u^\perp$, then $e_{d+1}, -e_{d+1}$ lie in different open halfspaces of $u^\perp$. Let $F$ be a $k$-face of $P \times \{0\}$. We claim that $u^\perp$ intersects at least one of the $(k+1)$-faces $\conv(F, e_{d+1})$ or $\conv(F, -e_{d+1})$ of $\bipyr(P)$ in codimension 1. This is easy to see: If $u^\perp$ contains $F$ or $u^\perp$ intersects the relative interior of $F$, then it also intersects $\conv(F,e_{d+1})$ and $\conv(F,-e_{d+1})$ in codimension $1$, and the claim holds. In the only remaining case, some or all vertices of $F$ are contained in an open halfspace of $u^\perp$ (and some may lie on $u^\perp$). Then, either $e_{d+1}$ or $-e_{d+1}$ lies in the other open halfspace, thus, either $\conv(F, e_{d+1})$ or $\conv(F, -e_{d+1})$ is intersected by $u^\perp$ in its relative interior. 
    This gives an injection from the $k$-faces of $P$ to the $k$-faces of $\bipyr(P) \cap u^\perp$,  so $f_k(\bipyr(P) \cap u^\perp) \geq f_k(P)$.
    Combining both cases, we obtain
    \[ f_k(\bipyr(P) \cap u^\perp)  \geq \min(f_k(P), s_k+2s_{k-1}) \ . \qedhere \]
\end{proof}

\section{Reconstruction of combinatorial types from central slices}\label{sec:reconstruction-central}

In \Cref{sec:lattices,subsec:constructions} we observed that the combinatorics of the polytope determine, to a certain extent, the combinatorics of its slices. We now turn to the inverse question: to what extent can the combinatorics of a polytope be reconstructed from combinatorial data of its slices?
First, we show that simplices are completely characterized by merely considering the list of combinatorial types of their slices. Then,
we show that the combinatorics of certain classes of polytopes are uniquely determined already by the coarsest combinatorial information of their central slices, namely, the number of vertices of their slices. 
More precisely, we consider the function recording the number of vertices of a slice:
\begin{align*}
    \varphi_P : \text{affine / central hyperplanes in } \R^d &\to \N \\
    H &\mapsto f_0(P \cap H) \ .
\end{align*}
When the domain of this function is the space of all affine hyperplanes in $\R^d$, then, in particular, $\varphi_P^{-1}(0)$ is the set of all hyperplanes
contained in $\R^d \setminus P$, which allows us to uniquely recover $P$.
Thus, not only the combinatorics but the precise geometric realization of $P$ is uniquely determined by this function.

We begin by characterizing simplices from the combinatorial types of their slices.
Let $\Delta_p$ denote a $p$-dimensional simplex.
Recall that the \emph{vertex figure} $P/v$ of a polytope $P$ at a vertex $v$ is the combinatorial type of the intersection of $P$ with a generic hyperplane separating $v$ from all other vertices of $P$.
\begin{theorem}
    Let $d\geq 3$. A $d$-dimensional polytope $P \subset \R^d$ is a simplex if and only if every nonempty slice is combinatorially equivalent to an $r$-fold pyramid over a product of simplices $
		\pyr^{r}(\Delta_p \times \Delta_q),
		$
    for some $p,q,r\geq 0$ satisfying $p+q+r=d-1$, or to a simplex of dimension at most $d-2$.
\end{theorem}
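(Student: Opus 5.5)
The forward direction classifies every nonempty slice of a simplex $\Delta_d=\conv(v_0,\dots,v_d)$ by the vertex partition that $H$ induces; the converse uses vertex figures together with a small set of prescribed slices to force $P$ to be a simplex.

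\emph{Forward direction.} Let $H$ be an affine hyperplane meeting $\Delta_d$, and partition the vertices as $V=A\sqcup B\sqcup Z$. Here $A$ lies in the open positive side of $H$, $B$ in the open negative side, and $Z=V\cap H$; write $r=|Z|$. If $A=\emptyset$ or $B=\emptyset$, then $H$ is supporting and $\Delta_d\cap H=\conv(Z)$ is a face. This is a simplex of dimension $r-1\le d-1$. It is of dimension at most $d-2$, or else it is the facet $\Delta_{d-1}$, which is itself of the required form $\pyr^{d-1}(\Delta_0\times\Delta_0)$.

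Otherwise $|A|=p+1\ge1$ and $|B|=q+1\ge1$. By \Cref{lem:types-of-faces-in-slices}, the vertices of the slice are the points of $Z$ together with one point on each edge $ab$ with $a\in A$, $b\in B$. I will show directly that
\[
\Delta_d\cap H\simeq \pyr^{r}\bigl(\conv(A)\cap H\,\text{-type}\bigr)
\]
in the following sense. Since $\Delta_d=\conv(Z)*\conv(A\cup B)$ is a join, and $H\supseteq Z$, the slice is the join of $\conv(Z)$ with $\conv(A\cup B)\cap H$. That second piece is a generic slice of the simplex $\conv(A\cup B)$ separating $A$ from $B$, and it is affinely a product $\Delta_p\times\Delta_q$.

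To see the product structure, use barycentric coordinates. A point of the slice with weights $\lambda_a$ on $A$ and $\mu_b$ on $B$ satisfies one linear condition $\sum_a\alpha_a\lambda_a=\sum_b\beta_b\mu_b$ with all $\alpha_a,\beta_b>0$. Normalizing both sides of this equation identifies the slice with $\Delta_A\times\Delta_B$ via a projective map. Joining with the $(r-1)$-simplex $\conv(Z)$ is an $r$-fold pyramid, and dimension counting gives $p+q+r=d-1$.

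\emph{Converse.} Suppose every nonempty slice has one of the listed types, and let $v$ be a vertex of $P$. The vertex figure $P/v$ is a slice separating one vertex from the rest, so it is either $\Delta_{d-1}$ or some $\pyr^r(\Delta_p\times\Delta_q)$ with $p+q+r=d-1$. This forces every vertex figure to have a restricted shape. Next, I will consider hyperplanes through $v$ that are parallel to a nearby supporting hyperplane, perturbed slightly. The aim is to show that each vertex figure is actually a simplex, i.e.\ that $P$ is simple. Simplicity then gives $f_0(P/v)=d$, so $v$ has degree exactly $d$.

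The next step is to obtain simpliciality. For this, choose $H$ cutting off a facet $F$: a hyperplane parallel to and very close to the supporting hyperplane of $F$ gives a slice combinatorially equivalent to $F$. Hence every facet, being $(d-1)$-dimensional, is of the form $\pyr^r(\Delta_p\times\Delta_q)$. Being simple as well, each facet must satisfy $p,q\le1$, or be a simplex, or be a prism-like piece. By induction on $d$, using that faces of $P$ also appear as limits of slices, I would reduce to showing that $P$ is both simple and simplicial. For $d\ge3$, a polytope that is simple and simplicial is a simplex.

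\emph{Main obstacle.} The delicate step is excluding polytopes whose vertex figures and facets are themselves products, for instance the $3$-cube, whose facets are squares $\Delta_1\times\Delta_1$. For such polytopes I plan to exhibit an explicit bad slice. For the cube, a hexagonal slice through the centre orthogonal to a main diagonal has $6$ vertices, which no allowed type in $d=3$ achieves: the allowed types have at most $4$ vertices.

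In general, I would pick a generic hyperplane through an interior point separating two disjoint faces. The resulting slice contains, as a face, a slice of a non-simplex facet that is not itself a simplex, and I would then count its vertices. Since $f_0(\pyr^r(\Delta_p\times\Delta_q))=(p+1)(q+1)+r$, this count should contradict the allowed types. Making this counting argument uniform in $d$ is the step I expect to be hardest.
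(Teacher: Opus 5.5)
Your forward direction is correct, and it is more self-contained than the paper's. The paper only notes that a $(d-1)$-dimensional slice of a simplex has at most $d+1$ facets and then cites Gr\"unbaum's classification. Your join decomposition, together with the barycentric normalization (a permissible projective map, not an affine one), proves the needed case of that classification directly.

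The converse, however, is not proved.
\begin{itemize}
\item The claim that every vertex figure is a simplex is only announced. The hypothesis alone allows $P/v\simeq\Delta_p\times\Delta_q$ or a pyramid over it. Considering ``hyperplanes through $v$ parallel to a nearby supporting hyperplane'' does not exclude this.
\item Your simpliciality step fails for two reasons. First, a hyperplane parallel and close to a facet $F$ gives a slice equivalent to $F$ only if each vertex of $F$ has exactly one edge leaving $F$. For a square pyramid, the slice near a triangular facet is a quadrilateral. So this step presupposes the unproved simplicity.
\item Second, even granting that, a simple $F$ of the form $\pyr^r(\Delta_p\times\Delta_q)$ need not be a simplex, because $\Delta_p\times\Delta_q$ is simple for all $p,q$. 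Your restriction $p,q\le 1$ is therefore also wrong.
\item The induction on $d$ is not set up, since the hypothesis concerns slices of $P$, not slices of its facets. The final counting step is explicitly left open.
\end{itemize}

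The missing idea, which is the paper's route, is to count facets of slices rather than vertices, and to prove simpliciality before simplicity.
\begin{itemize}
\item Every allowed $(d-1)$-dimensional type has at most $d+1$ facets.
\item Let $F$ be a facet with adjacent facets $F_1,\dots,F_m$ (meaning $\dim(F\cap F_i)=d-2$). Take a hyperplane parallel to $F$ and very close to it inside $P$, and tilt it slightly so that it cuts off a single vertex of $F$.
\item This hyperplane properly intersects $F,F_1,\dots,F_m$, so the slice has at least $m+1$ facets, and hence $m\le d$.
\item Thus $F$ is a $(d-1)$-polytope with at most $d$ facets, i.e.\ a simplex, and $P$ is simplicial.
\end{itemize}
Consequently every vertex figure is simplicial and of an allowed type.
\begin{itemize}
\item If $r\ge 1$, the proper face $\Delta_p\times\Delta_q$ must be a simplex. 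So one factor is a point and $P/v$ is a simplex.
\item If $r=0$ and $p,q\ge 1$, simpliciality forces $p=q=1$. Then $d=3$ and $P/v$ is a quadrilateral, so $\deg v=4$.
\item In that last case, pick an edge $vw$ (so $\deg w\ge 3$) and a hyperplane separating $\{v,w\}$ from the remaining vertices. The slice has at least $3+2=5$ vertices, whereas all allowed types in dimension $3$ have at most $4$.
\end{itemize}
Hence $P$ is simple and simplicial, and therefore a simplex for $d\ge 3$.
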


\begin{proof}
    Suppose $P$ is a simplex and $H$ an affine hyperplane. If $\dim(P \cap H)\leq d-2$, then $P\cap H$ is a face of $P$, and hence a simplex of dimension at most $d-2$. Suppose that $\dim(P \cap H)=d-1$.  Since $P$ has $d+1$ facets, the slice $P \cap H$ is a $(d-1)$-dimensional polytope with at most $d+1$ facets. By \cite[Sec.~6.1]{Grunbaum03:ConvexPolytopes}, any $(d-1)$-polytope with at most $d+1$ facets is combinatorially equivalent to 
    $\pyr^{r} \bigl(\Delta_p\times \Delta_q\bigr),$
    for some $p,q,r\geq 0$ satisfying $p+q+r=d-1$. $P\cap H$ is thus combinatorially equivalent to $\ \pyr^r \bigl(\Delta_p\times \Delta_q\bigr).$ 

    Conversely, suppose that every slice is of this form.
    To show that $P$ is simplicial, let $F$ be a facet of $P$, and let $F_1,\dots,F_m$ be the neighboring facets, i.e., $\dim(F \cap F_i) = d-2$. Let $H'$ be a hyperplane parallel to $F$, which intersects the interior of $P$ close to $F$. To obtain a hyperplane which intersects the relative interiors of $F,F_1,\dots,F_m$, we tilt $H'$ (sufficiently little)
    to cross a single vertex in $F \cap F_1$. 
    Denote this hyperplane by $H$.
    Now, $H$ intersects the relative interiors of $F$ and $F_1,\dots,F_m$.
    Thus, $P \cap H$ is a $(d-1)$-dimensional polytope with $m+1$ facets $F\cap H, F_1\cap H,\dots, F_m \cap H$. By assumption, $P \cap H \simeq \pyr^{r}(\Delta_p \times \Delta_q)$ for some $p,q,r \geq 0$ such that $p+q+r = d-1$. Hence, the number of facets of $P \cap H$ is 
    \[
        m+1 = 
        \begin{Bmatrix} r + ((p+1) + (q+1)) & \text{if } p,q > 0 \\
        r + p + q + 1 & \text{ otherwise}
        \end{Bmatrix} \leq  d+1 \ , 
    \]
     so $F$ has $m\leq d$ neighboring facets. As $F$ is a $(d-1)$-dimensional polytope with at most $d$ facets, $F$ is a simplex. This implies that $P$ is simplicial.

     To show that $P$ is simple, consider a vertex figure $P/v$ at a vertex $v \in \verts(P)$. Since $P$ is simplicial, so is the vertex figure. To see this, note that the facets of a vertex figure $P/v$ are vertex figures $F/v$ of facets $F$ of $P$ incident to $v$, and that the vertex figure of a simplex $F$ is a simplex. We thus have that every proper face of the vertex figure $P/v$ is a simplex. Assume for contradiction that $P/v$ is not a simplex itself. Since every vertex figure arises as a slice, we have that the vertex figure is of the form $\pyr^r (\Delta_p \times \Delta_q)$. If $r>0$, then the vertex figure has a proper $(p+q)$-dimensional face $\Delta_p \times \Delta_q$. Since $P/v$ is simplicial, we have that $\Delta_p \times \Delta_q$ is a simplex, and hence $p=0$ or $q=0$. Thus, we can assume that $q=0$ and we have that the vertex figure is the simplex $\pyr^r(\Delta_p \times \Delta_0) = \Delta_{p+r}$; a contradiction. 
     Next, assume that $r = 0$. Since $P/v$ is assumed not to be a simplex, we have $p,q \geq 1$. However, if $p > 1$ or $q>1$ then $P /v \simeq \Delta_p \times \Delta_q$ is not simplicial. Thus, $p=q=1$, which implies in particular that $d=3$ and that $P/v$ is a $2$-dimensional quadrilateral.      
     Hence, $\deg(v) = 4$. Let $w \in \verts(P)$ be a vertex incident to $v$. Consider the generic hyperplane $H$ separating $v$ and $w$ from the remaining vertices $\verts(P) \setminus \{v,w\}$. Since $\deg(w) \geq 3$, we have that $P \cap H$ intersects $3$ edges incident to $v$ and at least $2$ edges incident to $w$. Thus, $P \cap H$ has at least $5$ vertices. However, note that in dimension $d=3$, the possible slices are of the form $P \cap H \simeq \pyr^r(\Delta_p \times \Delta_q) \in \{\Delta_2, \Delta_1 \times \Delta_1\}$, so $f_0(P \cap H) \leq 4$, yielding a contradiction. It follows that every vertex figure is a simplex, so $P$ is simple. Thus, $P$ is simple and simplicial, and therefore a simplex.
\end{proof}

We now turn to the reconstruction of simple polytopes from the number of vertices of their slices.
For the remainder of this section, we therefore focus on the restriction to central hyperplanes, and assume that the origin is contained in the interior of any polytopes considered. 
A key object associated to central slices is the central hyperplane arrangement
\[
    \mathcal H(P) = \{v^\perp \mid v \text{ is a vertex of $P$ and not the origin} \}  
\]
\cite{Berlow2022,Brandenburg2025}.
To see the connection, recall from \Cref{sec:lattices} that the combinatorial type of the central slice $P \cap u^\perp$ is uniquely determined by the vertices it contains and by the edges it intersects, since both $\mathcal F^\circ$ and $\mathcal F^\subset$ are determined by their minimal elements (see \Cref{rem:minimal-elements}). The hyperplane $u^\perp$ contains the vertex $v \in \verts(P)$ if and only if $u \in v^\perp$, and the edge with vertices $v,w$ is intersected if and only if $u \in (v^\perp)^+ \cap (w^\perp)^-$ or $u \in (v^\perp)^- \cap (w^\perp)^+$. Thus, the combinatorial type of the slice $P\cap u^\perp$ is determined by the relatively open chamber of $\mathcal H(P)$ which contains $u$. Next, we show that for any polytope $P$ such that $0 \in \inter(P)$, this governing hyperplane arrangement is determined by the function $\varphi_P(u) = f_0(P \cap u^\perp)$.

\begin{proposition}\label{prop:reconstruct-arrangement}
    Let $P$ be a $d$-dimensional polytope containing the origin in its interior.
    If $d \geq 3$, then the hyperplane arrangement $\mathcal H(P)$ is uniquely determined by the function $\varphi_P: \Sph^{d-1} \to \N$ given by $ \varphi_P(u) = f_0(P \cap u^\perp)$. 
\end{proposition}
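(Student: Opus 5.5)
Since $0\in\inter(P)$, the hyperplane arrangement $\mathcal H(P)$ is a finite set of linear hyperplanes in $\R^d$. Each one is determined by the direction of $v$, not by $v$ itself. The plan is to recover $\mathcal H(P)$ as the set of hyperplanes $W\subset\R^d$ along which $\varphi_P$ fails to be locally constant on a set of positive $(d-2)$-dimensional measure. Concretely, define the discontinuity set $D=\{u\in\Sph^{d-1}\mid \varphi_P \text{ is not locally constant at } u\}$. We claim that $\mathcal H(P)$ is exactly the set of linear hyperplanes $W$ such that $W\cap D$ has nonempty interior in $W\cap\Sph^{d-1}$. The right-hand side depends only on $\varphi_P$, so the claim suffices.

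\textbf{Step 1 ($D\subseteq\bigcup\mathcal H(P)$).} As observed before the statement, the combinatorial type of $P\cap u^\perp$ depends only on the relatively open cell of $\mathcal H(P)$ containing $u$. In particular, on an open chamber (a cell of full dimension) $\varphi_P$ is constant. Hence $D$ is contained in $\bigcup_{v}(v^\perp\cap\Sph^{d-1})$. Since $D$ is a union of lower-dimensional cells and the hyperplanes are finitely many, any hyperplane $W\notin\mathcal H(P)$ meets $\bigcup\mathcal H(P)$ in a finite union of subspaces of dimension at most $d-2$. So $W\cap D$ has empty interior in $W\cap\Sph^{d-1}$, and only hyperplanes of $\mathcal H(P)$ can satisfy the criterion.

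\textbf{Step 2 (each $v^\perp$ lies in the criterion set).} Fix a vertex $v$ and choose $u$ generic in $v^\perp$. That is, $u$ lies on no other hyperplane $w^\perp$ with $w\neq\pm$ a positive multiple of $v$; vertices on the same line through the origin as $v$ give the same hyperplane and must be handled together. For $u$ in an open subset of $v^\perp$, the slice $P\cap u^\perp$ contains $v$ (and possibly the antipodal-direction vertex $v'$, which lies on the other side of $0$). Moving $u$ slightly to $u_\pm$ on either side of $v^\perp$ replaces the vertex $v$ by the intersection points of $u_\pm^\perp$ with the edges at $v$ going to the corresponding side. Let $a$ be the number of neighbours $w$ of $v$ with $\langle u,w\rangle>0$ and $b$ the number with $\langle u,w\rangle<0$. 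Then $\varphi_P(u_+)-\varphi_P(u)=b-1$ and $\varphi_P(u_-)-\varphi_P(u)=a-1$, or the other way around, plus a similar contribution from $v'$.

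We must show that some jump is nonzero. Because $0\in\inter(P)$, the vertex $v$ is not in $u^\perp$ for a supporting-type reason, and the hyperplane $u^\perp$ properly cuts $P$ through $v$. Since $\deg(v)\ge d\ge 3$, we have $a+b\ge 3$, after discarding neighbours on $u^\perp$, which genericity excludes. So $\varphi_P(u_+)+\varphi_P(u_-)-2\varphi_P(u)=a+b-2\ge1$ at the vertex $v$.

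\textbf{Main obstacle.} The difficulty is the possible cancellation with an antipodal-direction vertex $v'=-\lambda v$. For $v'$ the roles of $a$ and $b$ swap sides, but the second difference $a'+b'-2\ge1$ has the same sign. So the total second difference is at least $1$ and cannot cancel. This uses $d\ge3$ essentially: in $d=2$ one has $a+b=2$ and the jumps vanish. Working with the second difference rather than one-sided jumps is what makes the argument robust. Hence $\varphi_P$ is discontinuous on an open subset of $v^\perp\cap\Sph^{d-1}$, and combined with Step 1 this yields $\mathcal H(P)$.
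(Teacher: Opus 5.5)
Your proposal is correct and is essentially the paper's argument: both recover $\mathcal H(P)$ as the locus where the piecewise-constant $\varphi_P$ is not locally constant, and your second difference $\varphi_P(u_+)+\varphi_P(u_-)-2\varphi_P(u)\ge a+b-2\ge 1$ is exactly the paper's observation that some chamber adjacent to a generic $u\in v^\perp$ has $|E(u')|>k$ for $k\in\{1,2\}$; your nonempty-interior criterion only makes the final reconstruction step a bit more explicit. Two small fixes: the sentence asserting $v\notin u^\perp$ is garbled (you presumably mean that $u^\perp$ is not a supporting hyperplane) and is not used anywhere, and you should state that $v$ and $v'$ are never adjacent, because the segment $[v,v']$ contains the origin, which lies in the interior of $P$; this is what actually guarantees $a+b=\deg(v)$.
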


\begin{proof}
    We show that we can reconstruct the hyperplane arrangement $\mathcal H(P)$ from the locus of non-differentiability of $\varphi_P$. Let $H$ be a hyperplane of $\Hcal(P)$ and $u \in H$ be a vector lying in no other hyperplane of $\mathcal H(P)$.
    Then either we have that for all such $u$, the corresponding hyperplane $u^\perp$ intersects $P$ in precisely $k=1$ vertex, or all such hyperplanes intersect in precisely $k=2$ (antipodal) vertices. 
    Let $E$ be the set of edges which are intersected by $u^\perp$ in their relative interiors. Then $\varphi_P(u) = |E| + k$. Any 
    open full-dimensional cell of $\mathcal H(P)$ that is neighboring $H$ corresponds to a family of central hyperplanes $(u')^\perp$ which intersect the edges $E$ and additionally some edges $E(u')$ incident to the vertex or vertices contained in $u^\perp$. If $k=1$, since $d \geq 3$ there is at least one open full-dimensional cell on which $|E(u')| > 1$, so $\varphi_P(u) = |E| + |E(u')| > |E| + k$. If $k=2$, then there is at least one open full-dimensional cell on which $|E(u')|>2$, so $\varphi_P(u') = |E| + |E(u')| > |E| + k$.
    This implies that piecewise-constant function $\varphi_P$  ``changes value'', i.e., is non-differentiable at the hyperplanes of the arrangement $\mathcal H(P)$ restricted to the sphere $\Sph^{d-1}$. Since $\varphi_P$ is constant along interiors of cells of $\Hcal(P)$, it follows that the locus of non-differentiability equals the hyperplanes contained in $\mathcal H(P) \cap \Sph^{d-1}$. We can thus reconstruct the arrangement $\mathcal H(P)$ from the function $\varphi_P$.
\end{proof}

\begin{definition}
    A set $V \subset \R^d$ is in \emph{3-general linear position} if every $3$-element subset of $V$ is linearly independent. A polytope is in $3$-general linear position if its vertices are in $3$-general linear position.
\end{definition}

 This notion of 3-general linear position forbids \emph{linear} dependence of $3$ points, meaning that no $3$ points lie in a common plane through the origin. In particular, this property is not translation invariant. If a $3$-dimensional polytope $P$ is not in $3$-general linear position then almost all translations of $P$ will put it in 3-general linear position.

\begin{theorem}\label{prop:graph-reconstruction}
    Let $P$ be a simple $d$-dimensional polytope in $3$-general linear position containing the origin in its interior.
    If $d \geq 3$, then the graph of $P$ is uniquely determined by the function $\varphi_P: \Sph^{d-1} \to \N$ given by $ \varphi_P(u) = f_0(P \cap u^\perp)$. 
\end{theorem}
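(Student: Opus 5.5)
By \Cref{prop:reconstruct-arrangement}, $\varphi_P$ determines the arrangement $\mathcal H(P)$, so we know the set of lines $\R v$ for $v \in \verts(P)$, though not the vertices themselves. Since $0 \in \inter(P)$, no two vertices are positive multiples of each other. By $3$-general linear position, no two vertices are linearly dependent at all. Hence the hyperplanes $v^\perp$ are pairwise distinct and in bijection with the vertices. The plan is to decide, for every pair of hyperplanes $v^\perp \neq w^\perp$, whether $vw$ is an edge of $P$, using only the values of $\varphi_P$ near the codimension-$2$ subspace $v^\perp \cap w^\perp$.

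First we localize. Pick $u$ generic in $v^\perp \cap w^\perp$. By $3$-general linear position, $u$ lies on no other hyperplane of $\mathcal H(P)$, because any third vertex $x$ satisfies $\mathrm{span}(v,w,x)$ of dimension $3$, so $v^\perp\cap w^\perp\cap x^\perp$ has codimension $3$. Then $u^\perp$ contains exactly the vertices $v,w$, and properly crosses a fixed set $E$ of edges not incident to $v$ or $w$. Near $u$ there are four open chambers, indexed by signs $(\sigma,\tau)$ with $\sigma = \operatorname{sgn}\langle u',v\rangle$ and $\tau = \operatorname{sgn}\langle u',w\rangle$. By simplicity, $v$ has exactly $d$ neighbours. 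For a neighbour $x \ne w$, the edge $vx$ is crossed iff $\operatorname{sgn}\langle u,x\rangle \ne \sigma$; the signs of $\langle u,x\rangle$ are fixed and nonzero since $x\notin u^\perp$. Let $a_+$ and $a_-$ count the neighbours $x\neq w$ of $v$ with positive and negative sign, and define $b_\pm$ similarly for $w$.

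Now compare the two cases.
\begin{itemize}
\item If $vw$ is not an edge, then
\[
\varphi_P(\sigma,\tau)=|E|+a_{-\sigma}+b_{-\tau},
\]
with $a_++a_-=b_++b_-=d$. Thus $\varphi$ is \emph{additive}, i.e.
\[
\varphi(+,+)+\varphi(-,-)=\varphi(+,-)+\varphi(-,+).
\]
\item If $vw$ is an edge, it is crossed exactly when $\sigma\ne\tau$. Now $a_++a_-=b_++b_-=d-1$, and
\[
\varphi(\sigma,\tau)=|E|+a_{-\sigma}+b_{-\tau}+[\sigma\ne\tau].
\]
Hence
\[
\varphi(+,-)+\varphi(-,+)-\varphi(+,+)-\varphi(-,-)=2\neq 0.
\]
\end{itemize}
So $vw$ is an edge iff this alternating sum of $\varphi_P$ over the four chambers around a generic point of $v^\perp\cap w^\perp$ is nonzero. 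There is one orientation subtlety: the labels $\pm$ depend on the unknown sign of $v$ relative to its line. Swapping $v\to -v$ swaps $\sigma$, which flips the sign of the alternating sum but not whether it vanishes. So the test is intrinsic to the arrangement and $\varphi_P$.

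The main obstacle is making the local chamber analysis rigorous. We must show that all four chambers genuinely appear near $u$ and that no other hyperplanes pass nearby; both follow from codimension $2$ and the genericity of $u$ via $3$-general position. We must also handle the case where the intersection $v^\perp \cap w^\perp$ contains vertices lying in the slice. Simplicity is what makes the degree counts $a_\pm, b_\pm$ sum to $d$ or $d-1$, but the test only needs the crossing indicator of the edge $vw$. I would also double-check that $3$-general position rules out the only degenerate configuration, namely $w$ lying in $\mathrm{span}(v,x)$ for some other vertex $x$.
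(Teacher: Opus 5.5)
Your proof is correct, but your local test differs from the paper's. The paper evaluates $\varphi_P$ at only three points: $u_{vw}$ in the $(d-2)$-cell of $v^\perp\cap w^\perp$, and one point in each of the two $(d-1)$-cells of $w^\perp$ adjacent to it. Since $w$ stays in the slicing hyperplane there, only edges at $v$ change status. Thus $\varphi_P(u^+_{vw})+\varphi_P(u^-_{vw})-2\varphi_P(u_{vw})+2$ counts the edges at $v$ other than $vw$. Deciding whether $vw$ is an edge then requires comparing this count with $\deg(v)=d$, and this is exactly where the paper uses simplicity.

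Your mixed second difference over the four open chambers around the codimension-$2$ cell works differently. It cancels the unknown contributions $a_\pm, b_\pm$ of all other edges at $v$ and $w$, and isolates the crossing indicator $[\sigma\neq\tau]$ of $vw$ itself, so it never needs the degree. Consequently, your argument shows more: the graph is determined by $\varphi_P$ for every $d$-polytope ($d\ge 3$) in $3$-general linear position with the origin in its interior. Simplicity is needed only afterwards, to pass from the graph to the combinatorial type via Blind--Mani/Kalai. It is also precisely the mechanism the paper itself uses in the centrally symmetric case (\Cref{cor:cs-simple-polytope-reconstructible}), where the value is $\pm 4$ instead of $\pm 2$ because edges come in antipodal pairs. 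The paper's version uses fewer evaluations, but pays with the simplicity hypothesis at this step.

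Two of your stated loose ends are vacuous. First, for generic $u\in v^\perp\cap w^\perp$ no vertex other than $v,w$ lies in $u^\perp$, and for $u'$ in the open chambers no vertex lies in $(u')^\perp$, so there is no extra case to handle. Second, the identities $a_++a_-=d$ (respectively $d-1$) play no role in your test.
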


\begin{proof}
    By \Cref{prop:reconstruct-arrangement}, we can reconstruct the arrangement $\mathcal H(P)$ from the function $\varphi_P$. Since $P$ is in 3-general linear position, each hyperplane of $\mathcal H(P)$ corresponds to a unique vertex of $P$. Thus, we have a bijection between hyperplanes of $\mathcal H(P)$ and vertices $V(G)$ of the graph $G$ of $P$.
    Let $H_v,H_w$ be two distinct hyperplanes with corresponding vertices $v,w \in V(G)$.
    We want to recover if there is an edge between $v$ and $w$ or not. For this, consider a $(d-2)$-dimensional open cell $C_{vw}$ of $\mathcal H(P)$ which is supported on $H_v \cap H_w$, and let $u_{vw} \in \Sph^{d-1}$ be contained in this cell. Then $u_{vw}^\perp$ is a hyperplane containing the vertices $v,w$, but the assumption of $3$-general linear position implies that no further vertex of $P$ is contained in $u_{vw}^\perp$. In particular, if $E$ now denotes the edges which are properly intersected by $u_{vw}^\perp$ (and not contained in $u_{vw}^\perp$) then $\varphi_P(u_{vw}) = |E| + 2$. This allows us to compute $|E|$. 
    We will detect whether there is an edge between $v$ and $w$ or not by ``wiggling'' $u_{vw}$ inside $w^\perp$, as follows. Let $C_{vw}^+, C_{vw}^-$
    denote the unique $(d-1)$-dimensional cells which are contained in $w^\perp$ and share a common facet $C_{vw}$. 
    Even though we do not know the precise coordinates of $v$, we can choose the labeling of these cells such that
     \[
       C_{vw}^+ \subset \{ x \in w^\perp \mid \langle x,v \rangle > 0\}\ , \qquad C_{vw}^- \subset \{ x \in w^\perp  \mid \langle x,v \rangle < 0\}\  .
    \]
    Pick $u_{vw}^+ \in C_{vw}^+ \cap\Sph^{d-1}, u_{vw}^- \in C_{vw}^- \cap \Sph^{d-1}$. By construction, $(u_{vw}^+)^\perp$ and $(u_{vw}^-)^\perp$ both contain the point $w$ and do not contain the point $v$. 
    Let $E_{vw}^+$ denote the edges which are properly intersected by $(u_{vw}^+)^\perp$ and are incident to $v$. Similarly, let $E_{vw}^-$ denote the edges which are properly intersected by $(u_{vw}^-)^\perp$ and are incident to $v$. 
    By construction, we have
    \[
        E_{vw}^+ = \{ vn \text{ edge } \mid \langle u^+_{vw}, n \rangle < 0 \}, \qquad E_{vw}^- = \{ vn \text{ edge } \mid \langle u^-_{vw}, n \rangle > 0 \} \ .
    \]
    Note that these sets are disjoint.
    Since $w$ is the only vertex contained in the hyperplanes $(u_{vw}^+)^\perp$ and $(u_{vw}^-)^\perp$, we have 
    \[
        \varphi_P(u_{vw}^+) = |E| + |E_{vw}^+| + 1, \qquad \varphi_P(u_{vw}^-) = |E| + |E_{vw}^-| + 1 \ .
    \]
     We can therefore recover $|E_{vw}^+|$ and $|E_{vw}^-|$ uniquely from $\varphi_{P}(u_{vw}), \varphi_P(u_{vw}^+),$ and $\varphi_P(u_{vw}^-)$. 

    We are left with checking a few cases for the sizes of these sets.
    If $|E_{vw}^+|+|E^-_{vw}| = d$, then they form a partition of all edges incident to the simple vertex $v$. But since 
    $(u_{vw}^+)^\perp$ and $(u_{vw}^-)^\perp$
    contain $w$ but not $v$, this implies that there is no edge between $v$ and $w$.
    On the other hand, if $|E_{vw}^+|+|E^-_{vw}| < d$ then we show that $|E_{vw}^+|+|E^-_{vw}| = d-1$, and that there is an edge between $v$ and $w$. Indeed, if there is no edge then the construction of $E_{vw}^+$ and $E_{vw}^-$ implies that there is some neighbor $n$ of $v$ such that $\langle u_{vw}, n \rangle = 0$, i.e., that $n \in u_{vw}^\perp$. However, this violates that $P$ is in 3-general linear position, i.e., that the only vertices contained in $u_{vw}^\perp$ are $v$ and $w$, which yields a contradiction. Thus, $vw$ forms an edge, and this is the only edge incident to $v$ which is not contained in $E_{vw}^+ \sqcup E_{vw}^-$. We thus have
    \[
        |E^+_{vw}| + |E^-_{vw}| = 
        \varphi_P(u_{vw}^+) + \varphi_P(u_{vw}^-) - 2\varphi_P(u_{vw}) +2 =  
        \begin{cases}
            d-1 & \text{if } vw \text{ is an edge} \\
            d & \text{otherwise} \ ,
        \end{cases}
    \]
    and we
    can uniquely recover from $|E_{vw}^+|$ and $|E_{vw}^-|$ if there is an edge between $v$ and $w$.
\end{proof}

Note that the drums from \Cref{ex:drums} are not simple, nor in 3-general linear position. They have the same vertex-counting function $\varphi:\Sph^{d-1} \to \N$ as in \Cref{prop:graph-reconstruction}, but distinct graphs. Reconstructing the graph of a simple polytope allows us to reconstruct the entire combinatorial type, as follows.

\begin{theorem}\label{th:simple-polytope-reconstructible}
   Let $P$ be a simple $d$-dimensional polytope  in 3-general linear position and containing the origin in its interior. Then the combinatorial type of $P$ is reconstructible from the function $\varphi_P: \Sph^{d-1} \to \N$ given by $ \varphi_P(u) = f_0(P \cap u^\perp)$. 
\end{theorem}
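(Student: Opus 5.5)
The plan is to chain two results. First, \Cref{prop:graph-reconstruction} recovers the graph of $P$ from $\varphi_P$. Then the theorem of Blind and Mani, in the form proved by Kalai \cite{Blind1987,Kalai1988}, says that the combinatorial type of a simple polytope is determined by its vertex-edge graph.

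The first step is to check that the hypotheses of \Cref{prop:graph-reconstruction} hold. We have $d\ge 3$ in the main case, $P$ is simple, $P$ is in $3$-general linear position, and $0\in\inter(P)$. Feeding $\varphi_P$ into that theorem recovers the abstract graph $G(P)$ up to isomorphism. Concretely, the vertices are indexed by the hyperplanes of $\mathcal H(P)$, which are recovered by \Cref{prop:reconstruct-arrangement}. Adjacency is detected by the quantity $\varphi_P(u_{vw}^+)+\varphi_P(u_{vw}^-)-2\varphi_P(u_{vw})+2$.

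The second step is to apply Kalai's argument. In a simple $d$-polytope, every set of $k$ edges at a vertex spans a unique $k$-face. The faces are exactly the vertex sets of those induced subgraphs $H$ that are $k$-regular and connected and satisfy one extra condition: some acyclic orientation has a unique sink on $H$. These are the "good" orientations, the ones minimizing the number of $h$-vector type counts $\sum_v 2^{\mathrm{indeg}(v)}$. This characterization is purely graph-theoretic. It therefore yields the full face lattice as a set system on $V(G)$, ordered by inclusion, so the combinatorial type of $P$ is reconstructed.

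The remaining obstacle is the low-dimensional edge case. The theorem statement, unlike \Cref{prop:graph-reconstruction}, does not assume $d\ge3$. For $d\le 2$ I would treat it directly. When $d=2$, $P$ is a polygon and every central slice has $2$ vertices, so $\varphi_P$ is constant. Combinatorial reconstruction therefore fails unless we use that $3$-general linear position is impossible. Indeed, in $\R^2$ any three vectors are linearly dependent, so a polygon has no admissible configuration and the statement holds vacuously. When $d=1$, simplicity and $3$-general linear position again force a trivial case: a segment has only $2$ vertices, and its type is determined. Beyond this, the substance lies entirely in the cited reconstruction theorem for simple polytopes, which I would quote rather than reprove.
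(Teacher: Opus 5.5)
Your proposal is correct and follows the paper's proof, which is exactly the chain \Cref{prop:graph-reconstruction} followed by the Blind--Mani/Kalai reconstruction of simple polytopes from their graphs; your check that $d\le 2$ is vacuous or trivial (no polygon is in $3$-general linear position) is a small addition, since the paper tacitly assumes $d\ge 3$ here. One caution: your paraphrase of Kalai's criterion is inaccurate, because any connected $H$ admits an acyclic orientation with a unique sink on $H$; the actual condition is that $H$ be an initial set (no edge directed into it from outside) for some \emph{good} acyclic orientation, i.e.\ one with a unique sink on every face, but since you only quote the theorem this does not affect the argument.
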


\begin{proof}
    By \Cref{prop:graph-reconstruction}, the graph of $P$ is reconstructible from $\varphi_P$. Since any simple polytope is reconstructible from its graph \cite{Blind1987,Kalai1988}, the claim follows.
\end{proof}

We now move to centrally symmetric polytopes. Note that if $V \subset \R^d$ is centrally symmetric (about the origin), i.e., $V=-V$, then it is not in general linear position. However, by introducing a symmetric notion of 3-general linear position, we get an analogous reconstruction for centrally symmetric polytopes.

\begin{definition}
    A centrally symmetric set $V \subset \R^d$ is in \emph{symmetric 3-general linear position} if for every centrally symmetric $6$-element subset $S \subseteq V$ it holds that $\dim(\operatorname{span}(S))=3$. A centrally symmetric polytope is in \textit{symmetric 3-general linear position} if its vertices are in symmetric 3-general linear position.
\end{definition}

Equivalently, writing a centrally symmetric set $V$ not containing the origin as a disjoint union $V=V_0 \sqcup -V_0$, $V$ is in symmetric 3-general linear position if and only if $V_0$ is in 3-general linear position.

\begin{theorem}\label{cor:cs-simple-polytope-reconstructible}
    Let $P$ be a centrally symmetric simple $d$-dimensional polytope in symmetric $3$-general linear position. If $d \geq 3$, then the combinatorial type of $P$ is reconstructible from the function $\varphi_P: \Sph^{d-1} \to \N$ given by $ \varphi_P(u) = f_0(P \cap u^\perp)$. 
\end{theorem}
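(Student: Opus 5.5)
The plan is to mimic the proof of \Cref{prop:graph-reconstruction} and then \Cref{th:simple-polytope-reconstructible}, replacing single vertices by antipodal pairs. Since $P$ is centrally symmetric with $0 \in \inter(P)$, no vertex is the origin, and we write $\verts(P) = V_0 \sqcup -V_0$ with $V_0$ in $3$-general linear position. By \Cref{prop:reconstruct-arrangement} (using $d\ge 3$), we recover $\mathcal H(P)$ from $\varphi_P$. Now $v^\perp = (-v)^\perp$, and by $3$-general linear position of $V_0$ the hyperplanes $v^\perp$ for $v \in V_0$ are pairwise distinct. Hence the hyperplanes of $\mathcal H(P)$ are in bijection with antipodal pairs $\{\pm v\}$, and each hyperplane $u^\perp$ with $u$ generic in $v^\perp$ contains exactly the two vertices $\pm v$.

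Next we determine adjacency between pairs. For $v,w \in V_0$ with $w\neq \pm v$, there are four possible edges, namely $v w$, $v(-w)$, $(-v)w$ and $(-v)(-w)$. By central symmetry, $vw$ is an edge iff $(-v)(-w)$ is, and $v(-w)$ is an edge iff $(-v)w$ is. We repeat the wiggling argument. Take $u_{vw}$ in an open $(d-2)$-cell of $H_v\cap H_w$. Symmetric $3$-general position guarantees that $u_{vw}^\perp$ contains exactly the four vertices $\pm v, \pm w$, so $\varphi_P(u_{vw}) = |E|+4$. Then move into the two adjacent $(d-1)$-cells $C^\pm$ inside $w^\perp$, separated by the sign of $\langle x,v\rangle$. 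In these, $\pm w$ stay on the hyperplane while $\pm v$ leave it on opposite sides. The count $\varphi_P(u^+)+\varphi_P(u^-)-2\varphi_P(u_{vw})$ then expresses the number of edges at $v$ and at $-v$ that become properly intersected.

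By the same reasoning as before, using simplicity (degree $d$) and that no neighbour of $\pm v$ other than possibly $\pm w$ lies on $u_{vw}^\perp$, each of $v$ and $-v$ contributes $d$ minus the number of its neighbours among $\{\pm w\}$. Each vertex has at most one neighbour among $\{w,-w\}$; otherwise central symmetry would force $v,w,-w$ (hence $0$) into an edge-adjacency pattern. I will need to verify this, and otherwise just allow $0,1,2$. The resulting expression therefore tells us whether the pair $\{\pm v\}$ is adjacent to $\{\pm w\}$, but not whether the edges are $vw$ or $v(-w)$.

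This orientation ambiguity is the main obstacle. My first attempt is to resolve it using the sign data: the cell $C^+$ is labelled by $\langle x,v\rangle>0$, and the edge to $w$ versus $-w$ is distinguished by which side the far endpoint lies on. I would instead wiggle within $v^\perp$ and compare the two one-sided counts $\varphi_P(u^+)$ and $\varphi_P(u^-)$ individually, rather than only their sum. If that fails, an alternative is to recover the quotient graph on antipodal pairs together with which pairs are adjacent, and fix the lift by choosing $v\mapsto$ representative consistently along cycles. Then argue that any two lifts compatible with central symmetry give isomorphic graphs, or are forced by simplicity since each vertex has exactly $d$ neighbours and the 2-faces (cycles) must close up. Once the graph of $P$ is determined up to isomorphism, Blind–Mani/Kalai \cite{Blind1987,Kalai1988} yields the combinatorial type, as in \Cref{th:simple-polytope-reconstructible}.
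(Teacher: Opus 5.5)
\textbf{There is a genuine gap: your proposal does not decide whether the edges run $v w$ or $v(-w)$.}

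The parts you set up are correct: recovering $\mathcal H(P)$, the bijection between its hyperplanes and antipodal pairs, and the symmetric wiggle inside $w^\perp$. That wiggle gives $\varphi_P(u^+)+\varphi_P(u^-)-2\varphi_P(u_{vw})=2(d-a)-4$, where $a$ is the number of neighbours of $v$ in $\{\pm w\}$. So it tells you whether the pairs $\{\pm v\}$ and $\{\pm w\}$ are adjacent, which determines only the quotient graph on antipodal pairs. Neither of your proposed fixes recovers the lift.

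The first fix cannot work. For every $u\in v^\perp\cup w^\perp$ near $u_{vw}$, the four edges $vw$, $v(-w)$, $(-v)w$, $(-v)(-w)$ meet $u^\perp$ only in endpoints, or lie in it. They therefore never contribute a slice vertex beyond $\pm v,\pm w$. The one-sided counts only involve edges from $\pm v$ (resp.\ $\pm w$) to the remaining vertices. Replacing $\{vw,(-v)(-w)\}$ by $\{v(-w),(-v)w\}$ leaves every value of $\varphi_P$ on $v^\perp\cup w^\perp$ near $u_{vw}$ unchanged.

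The second fix is unsupported, and false as a statement about graphs. The graph of $P$ is a $2$-fold cover of the quotient graph, determined by a $\Z/2$-valued class on its cycles, and that class is exactly the missing information. A graph typically has non-isomorphic connected double covers. For example, $K_4$ is covered both by the cube graph and by a cubic graph on $8$ vertices containing triangles. Every lift has all degrees equal to $d$, so simplicity imposes no constraint, and the $2$-faces are not known at this stage.

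The missing idea, which is the paper's argument, is to leave both hyperplanes $H_v$ and $H_w$ at once.
\begin{itemize}
\item Fix arbitrary orientations of $H_v,H_w$; this is the same as choosing representatives $v,w$ of the two pairs.
\item Pick $u^{\epsilon\delta}$ in the full-dimensional cell adjacent to $C_{vw}$ with $\operatorname{sgn}\langle u,v\rangle=\epsilon$ and $\operatorname{sgn}\langle u,w\rangle=\delta$.
\item By symmetric $3$-general position, every other vertex $n$ satisfies $n\notin\operatorname{span}(v,w)$, so it has the same sign at all four points. Hence every edge not joining $\{\pm v\}$ to $\{\pm w\}$ contributes $0$ to $\varphi_P(u^{++})+\varphi_P(u^{--})-\varphi_P(u^{+-})-\varphi_P(u^{-+})$.
\item The edges $vw,(-v)(-w)$ are cut exactly for $u^{+-},u^{-+}$, and $v(-w),(-v)w$ exactly for $u^{++},u^{--}$.
\end{itemize}
So this alternating sum equals $-4$, $+4$ or $0$, and it determines the edges between the chosen representatives. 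Which of the four cells are opposite is visible from the arrangement alone, so no coordinates are needed.

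For the value $0$ to be unambiguous, you need the fact you left open: $v$ is not adjacent to both $w$ and $-w$. In a simple polytope two edges at $v$ lie in a common $2$-face. That face would contain $\pm w$ and hence the origin, contradicting that the origin is interior. After this, the Blind--Mani/Kalai theorem finishes the proof as you say.
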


\begin{proof}
    By \Cref{prop:reconstruct-arrangement}, we can reconstruct the arrangement $\mathcal H(P)$ from the function $\varphi_P$. Since $P$ is in symmetric 3-general linear position, each hyperplane of $\mathcal H(P)$ corresponds to a unique pair of antipodal vertices of $P$. Thus, we have a bijection between hyperplanes of $\mathcal H(P)$ and antipodal pairs of vertices $V(G)$ of the graph $G$ of $P$.
    Let $H_v,H_w$ be two distinct hyperplanes with corresponding vertices $\pm v, \pm w \in V(G)$.
    We assign an orientation to $H_v$ such that $H_v^+ = \{u \mid \langle u,v \rangle > 0\}$, and similarly for $H_w^+$.

    First, we show that $vw$ and $v(-w)$ cannot both be edges simultaneously. Assume for contradiction that $vw$ and $v(-w)$ are both edges. Since $P$ is simple, the two edges are contained in a $2$-dimensional face $F$ of $P$. Since $F$ has vertices $w,-w$, $F$ contains the origin. By assumption, $P$ contains the origin in its relative interior, so $P = F$, a contradiction to $\dim(P) \geq 3$.

    Thus, it remains to determine whether there are edges $vw$ and $(-v)(-w)$, or $v(-w)$ and $(-v)w$, or no edge between any of these vertices. 
    For this, consider a $(d-2)$-dimensional open cell $C_{vw}$ of $\mathcal H(P)$ which is supported on $H_v \cap H_w$.
    Let $u^{++}, u^{+-}, u^{-+}, u^{--} \in \Sph^{d-1}$ be contained in full-dimensional adjacent cells of $C_{vw}$, such that \[
    u^{++} \in H_v^+ \cap H_w^+, \ u^{+-} \in H_v^+ \cap H_w^-, \ u^{-+} \in H_v^- \cap H_w^+, \ u^{--} \in H_v^- \cap H_w^- \ .
    \]
    Note that for each of these choices $u \in \{u^{++}, u^{+-}, u^{-+}, u^{--}\}$, the hyperplane $u^\perp$ does not contain any vertices. Hence, $\varphi_P(u)$ is the number of edges which are intersected by $u^\perp$. We compute the value of 
    \begin{align}\label{eqn:varphi_ppmm}
         \varphi_P(u^{++}) + \varphi_P(u^{--}) - \varphi_P(u^{+-}) - \varphi_P(u^{-+}) \ .
    \end{align}
    First, suppose that $vn$ is an edge and $n \not \in \{w,-w\}$. Since all $u \in \{u^{++}, u^{+-}, u^{-+}, u^{--}\}$ lie in maximal cells adjacent to $C_{vw}$, and $P$ is in symmetric $3$-general linear position, we have that they all lie in the same open halfspace defined by $H_n$. 
    Indeed, since $P$ is in symmetric $3$-general linear position, the points $v,w,n$ are linearly independent, and hence $H_v \cap H_w \not \subseteq H_n$. Thus, $C_{vw}$ is contained in one of the open half-spaces defined by $H_n$, and $H_n$ is not a bounding hyperplane of the closure of $C_{vw}$. Since $C_{vw}$ is a cell of the arrangement, it follows that the neighboring maximal cells of $C_{vw} \subset H_v \cap H_w$ are contained in the same open half-space of $H_n$.
    
    Thus, 
    $\sgn(\langle u, n\rangle) = c $ for all $u \in \{u^{++}, u^{+-}, u^{-+}, u^{--}\}$ and some fixed $c \in \{-1,1\}$. 
    Since $u^\perp$ intersects the edge $vn$ if and only if $\sgn(\langle u,v\rangle) = - \sgn(\langle u,n\rangle)$, the edge $vn$ contributes exactly to one of the numbers $\varphi_P(u^{++}),\varphi_P(u^{--})$, and exactly to one of the numbers $\varphi_P(u^{+-}),\varphi_P(u^{-+})$. A analogous argument applies to edges incident to $-v, w$ or $-w$. Hence, whether there exist any of the edges $vn$, $wn$, $(-v)n$, $(-w)n$ or not, does not influence the value of \eqref{eqn:varphi_ppmm}. Similarly, if $nm$ is an edge with $n,m\notin\{\pm v,\pm w\}$, then the signs of both $\langle u,n\rangle$ and $\langle u,m\rangle$ are constant for all $u\in\{u^{++},u^{+-},u^{-+},u^{--}\}$. Hence, $nm$ is either intersected by all four hyperplanes or by none of them, and therefore its contribution to \eqref{eqn:varphi_ppmm} is zero.
    
    Now,
    if $vw$ is an edge, then so is $(-v)(-w)$, and both edges are intersected by $(u^{+-})^\perp$ and $(u^{-+})^\perp$, but not by $(u^{++})^\perp$ or $(u^{--})^\perp$. On the other hand, if $v(-w)$ is an edge, then so is $(-v)w$, and both edges are intersected by $(u^{++})^\perp$ and $(u^{--})^\perp$, but not by $(u^{+-})^\perp$ or $(u^{-+})^\perp$. We thus have
    \[ 
    \varphi_P(u^{++}) + \varphi_P(u^{--}) - \varphi_P(u^{+-}) - \varphi_P(u^{-+}) = 
    \begin{cases}
        -4 &\text{if } vw, (-v)(-w) \text{ are edges} \\
        4 &\text{if } (-v)w, v(-w) \text{ are edges} \\
        0 &\text{otherwise} \ ,
    \end{cases}
    \]
    
    so we can uniquely recover the graph of $P$ from $\varphi_P$. Since any simple polytope is reconstructible from its graph \cite{Blind1987,Kalai1988}, the claim follows.
\end{proof}

\section{Hierarchy of slice equivalences}\label{sec:hierarchies}

We now compare several notions of equivalence between polytopes that are defined in terms of their slices. More specifically, we ask how similar or different two polytopes $P, Q$ need to be in order to have similar or different behavior with respect to their slices. There are several natural ways to make this question precise. One may compare slices \emph{pointwise}, requiring that corresponding hyperplanes define combinatorially equivalent slices, or only \emph{setwise}, requiring that the same combinatorial types of slices occur. Moreover, one may consider either all affine hyperplanes or only central hyperplanes.

The resulting notions of equivalence interact with other combinatorial and geometric data of the polytope, such as its face lattice, its oriented matroid, and the hyperplane arrangements determined by its vertices. The purpose of this section is to organize these relations into two hierarchies: one for central slices and one for affine slices. Before doing so, we collect several examples that will be used throughout the section to show that many of the possible implications fail.

\subsection{Pathological examples}\label{sec:pathological-examples}

We collect examples of pairs of polytopes that differ only by minor geometric modifications, resulting in combinatorially different polytopes, while leaving the behavior of the slices unchanged. 
Recall from \Cref{sec:reconstruction-central} that an important object governing central slices is the hyperplane arrangement $\mathcal H(P) = \{v^\perp \mid v \in \verts(P) \text{ and $v$ is not the origin}\}$. Among other properties, we present examples in relation to this arrangement.
All examples presented in this subsection will serve as examples and counterexamples in the hierarchies presented in \Cref{sec:hierarchy-affine,sec:hierarchy-central}. The previously presented \Cref{ex:perturbed_bipyramid} should also be considered as part of the list of examples in this section.

\begin{example}[Same sets of affine and central slices]\label{ex:0sym-setwise-cse-not-comb-equiv}
 This example demonstrates that two polytopes can admit the same set of combinatorial types of slices while being combinatorially distinct, also when we enforce central symmetry of both the polytopes and the slices. Let $P \subset \R^3$ be the bipyramid over a square with a pair of stacked faces and $Q \subset 
 \R^3$ be obtained from the cube by slight perturbation of its vertices, see \Cref{fig:0symm-setwise-cse-not-comb-equiv}.
 One can check, for example computationally, that for both polytopes the set of combinatorial types of affine slices is all $n$-gons for $n\leq 10$, and the set of combinatorial types of central slices is $4,6$ and $8$-gons. It is not clear how, or if, this example generalizes to higher dimensions.
\end{example}
    \begin{figure}[H]
    \centering
    \hspace{8em}%
    \begin{minipage}{0.4\textwidth}
            \includegraphics[width=.7\textwidth]{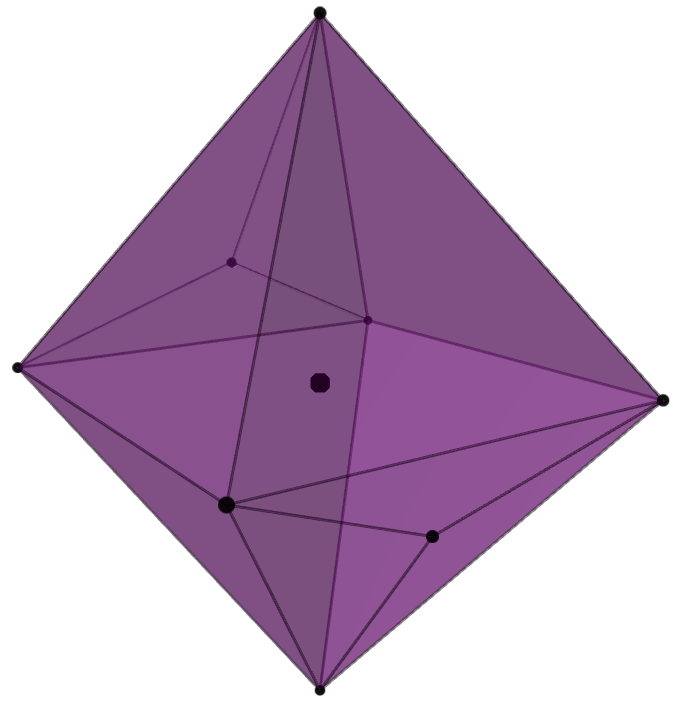}
        \end{minipage}
        \hfill%
        \begin{minipage}{0.4\textwidth}
        \includegraphics[width=.6\textwidth]{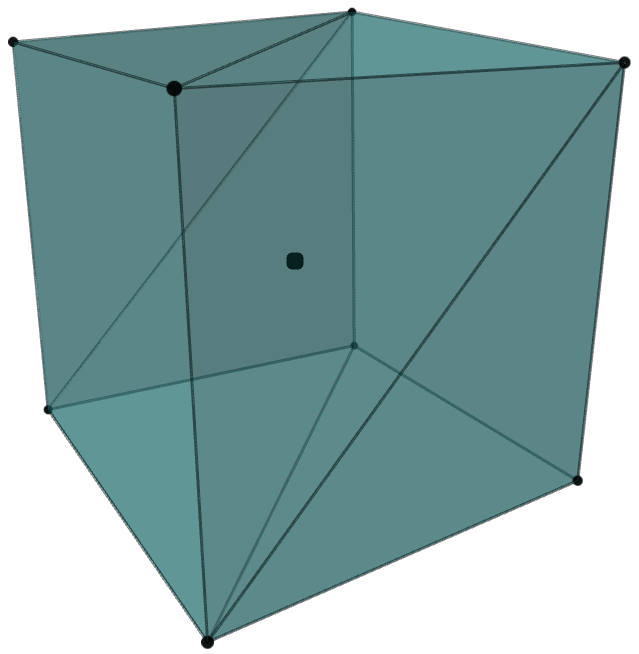}
        \end{minipage}
        \hspace{8em}
    \caption{(Centrally symmetric) polytopes that are combinatorially distinct, but whose combinatorial types of central and affine slices agree (\Cref{ex:0sym-setwise-cse-not-comb-equiv}).}
\label{fig:0symm-setwise-cse-not-comb-equiv}
\end{figure}

\begin{example}[Drums]\label{ex:drums}
    Let $N_{12} = \conv(v_1,\dots,v_{12}) \subset \R^2$ denote a regular $12$-gon centered at the origin, and let $R = \conv(N_{12} \times \{1\}, N_{12} \times \{-1\})$. 
    We modify $N_{12}$ as
    \begin{align*}
        T &= \conv(v_1, v_2, (1+\varepsilon) v_3, v_4, v_5, (1+\varepsilon) v_6, v_7, v_8, (1+\varepsilon) v_9, v_{10}, v_{11}, (1+\varepsilon) v_{12}) \ , \\ 
         T' &= \conv(v_1, v_2, v_3, v_4, v_5, (1+\varepsilon) v_6, v_7, v_8, v_9, v_{10}, v_{11}, (1+\varepsilon) v_{12}) \ , \\
         T'' &= \conv(v_1, v_2, (1+\varepsilon) v_3, v_4, v_5,  v_6, v_7, v_8, (1+\varepsilon) v_9, v_{10}, v_{11},  v_{12}) \ ,
     \end{align*}
     for sufficiently small $\varepsilon > 0$.
    The drums $P$ and $Q$ (as depicted in \Cref{fig:drums}) are obtained as
    \[
        P = \conv \left( T \times \{1\}, N_{12} \times \{-1\} \right) \ , \qquad  Q = \conv \left( T' \times \{1\}, T'' \times \{-1\} \right) \ .
    \]
   Then $P$ and $Q$ are not combinatorially equivalent, however their hyperplane arrangements $\Hcal(P) = \Hcal(Q)$ agree, and pointwise  their central slices agree, that is, $P \cap u^\perp \simeq Q \cap u^\perp$ for every $u \in \Sph^2$.
To see this, note that the combinatorial type of a $2$-dimensional slice is determined by its number of vertices, that is, the number of vertices of $P$ contained in $u^\perp$, and the number of edges of $P$ that are properly intersected by $u^\perp$. $P$ and $Q$ only differ by the direction of diagonals that break certain quadrilateral side faces, ensuring that any central hyperplane $u^\perp$ intersects the same number of vertices and edges in their relative interiors. This construction is special to the non-centrally symmetric setting: in a centrally symmetric polytope, opposite quadrilaterals necessarily have compatible diagonals. In higher dimensions, the combinatorial type is not solely determined by the number of intersected edges. Thus, this construction cannot be generalized to higher dimensions (at least not in a direct way).
\end{example}
    \begin{figure}[H]
        \centering
        \begin{subfigure}[b]{0.4\textwidth}
            \centering
            \includegraphics[width=.8\textwidth]{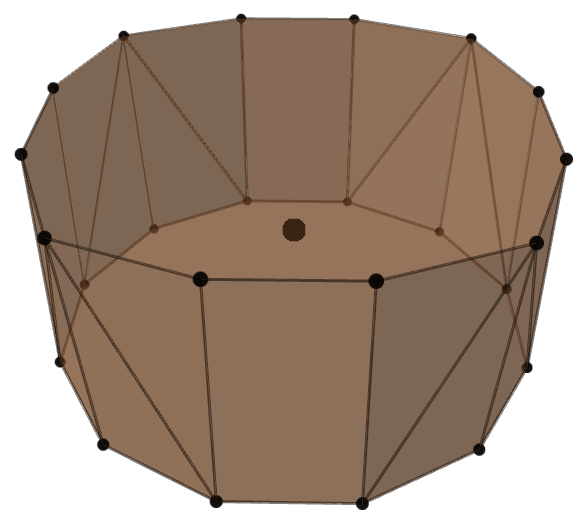}
        \end{subfigure}
        \begin{subfigure}[b]{0.4\textwidth}
            \centering
            \includegraphics[width=.8\textwidth]{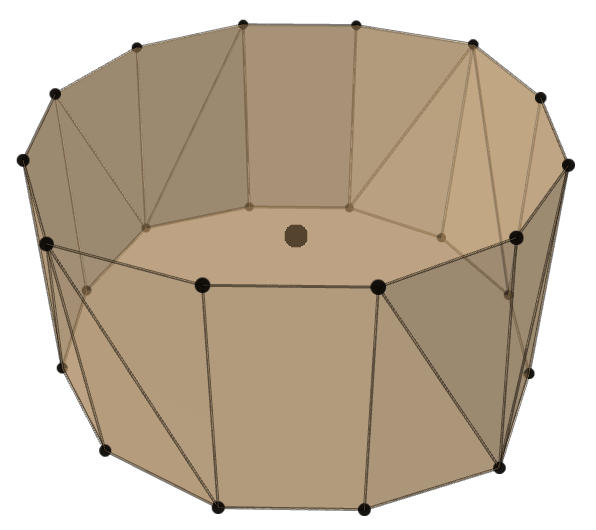}
        \end{subfigure}
        \caption{The drum polytopes $P$ and $Q$ (\Cref{ex:drums}).}
        \label{fig:drums}
    \end{figure}

\begin{example}[Cube variants]\label{ex:pointwise_cse}
    Consider the following variant of the cube $[-1,1]^3$:
    $$
        \begin{aligned}
        &v_1=\sma 1\\1\\-1\strix,\quad
        v_2=\sma 1\\-1\\-1\strix,\quad
        v_3=\sma -1\\1\\-1\strix,\quad
        v_4=\sma -1\\-1\\-1\strix,\quad \\[.2ex]
        v_5&=\frac{3}{2} \sma 1\\1\\0\strix,\quad
        v_6=\frac54\sma 1\\-1\\0\strix,\quad 
        v_7=\frac54\sma -1\\1\\0\strix,\quad
        v_8=\frac54\sma -1\\-1\\0\strix,\quad \\[.2ex]
    v_9&=\frac67\sma 1\\1\\1\strix,\quad
        v_{10}=\sma 1\\-1\\1\strix,\quad
        v_{11}=\sma -1\\1\\1\strix,\quad
        v_{12}=\frac65 \sma -1\\-1\\1\strix,
        \end{aligned}
        $$
    and $R = \conv(v_1,\dots,v_{12})$. We modify $R$ in two distinct ways. Let $\lambda = \frac{9}{10}$ and define
    \[
        P = \conv(v_1,\dots,v_{11},\lambda v_{12}), \qquad Q = \conv(\lambda v_1, v_2,\dots,v_{12}) \,
    \]
    see \Cref{fig:pointwise_cse}.
    By construction, the hyperplane arrangements $\mathcal H(R) = \mathcal H (P) = \mathcal H(Q)$ coincide. However, they have distinct combinatorial types, as can be seen as follows.
    $R$ is the convex hull of $3$ quadrilaterals $\conv(v_1,\dots,v_4), \ \conv(v_5,\dots,v_8)$, and $ \conv(v_9,\dots,v_{12})$.
    Combinatorially, $P$ is obtained from $R$ by ``breaking'' the upper facet into two triangles, while $Q$ is obtained by ``breaking'' the lower facet. This creates new edges, namely $v_{10}v_{11}$ in $P$, and $v_2v_3$ in $Q$. 
    As a result, $P$ has a triangular face (on the top) whose vertices all have degree $5$, but $Q$ has no such triangular face.
    Note that the new edges are antipodal, i.e., a central hyperplane $u^\perp$ intersects $v_2v_3$ if and only if it intersects $v_{10} v_{11}$. It follows that $P \cap u^\perp \simeq Q \cap u^\perp$ for every $u \in \Sph^{d-1}$, but $P \cap u^\perp \not \simeq R \cap u^\perp \not \simeq Q \cap u^\perp$ whenever $u^\perp$ intersects these edges.
    Similar to \Cref{ex:drums}, the main obstruction that guarantees combinatorial equivalence of slices, but not of the polytopes is to break two opposite faces into two triangles. 
    Any central slice intersecting the top edge in $P$ intersects the bottom edge in $Q$, and vice versa. Similar to \Cref{ex:drums}, this mechanism does not transfer to a centrally symmetric construction or to a construction in higher dimensions (at least not in a direct way).
\end{example}
\begin{figure}[H]
    \centering
    \hspace{1em}%
    \begin{minipage}{0.32\textwidth}
            \includegraphics[width=.74\textwidth]{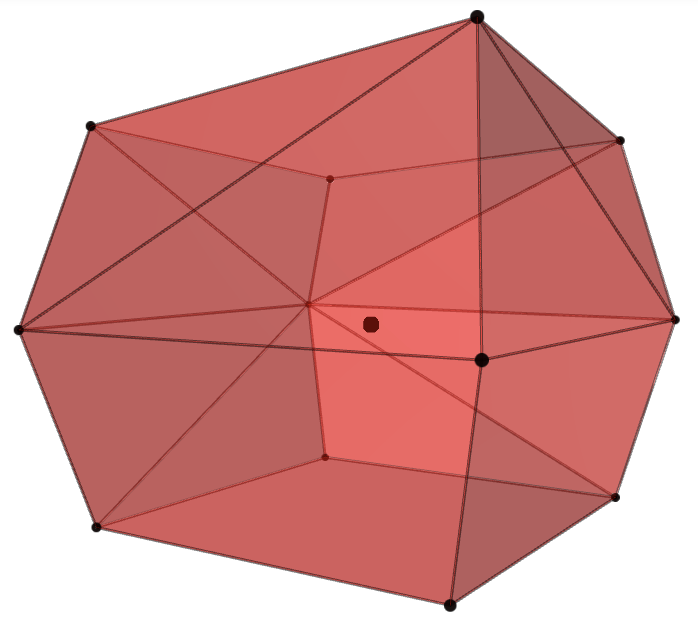}
        \end{minipage}
        \hfill%
        \begin{minipage}{0.32\textwidth}
            \includegraphics[width=.74\textwidth]{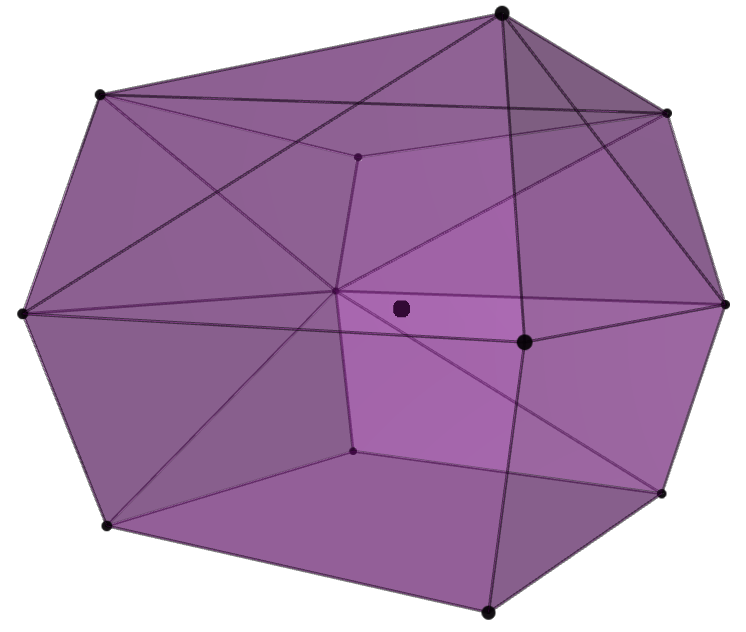}
        \end{minipage}
        \hfill%
        \begin{minipage}{0.32\textwidth}
        \includegraphics[width=.76\textwidth]{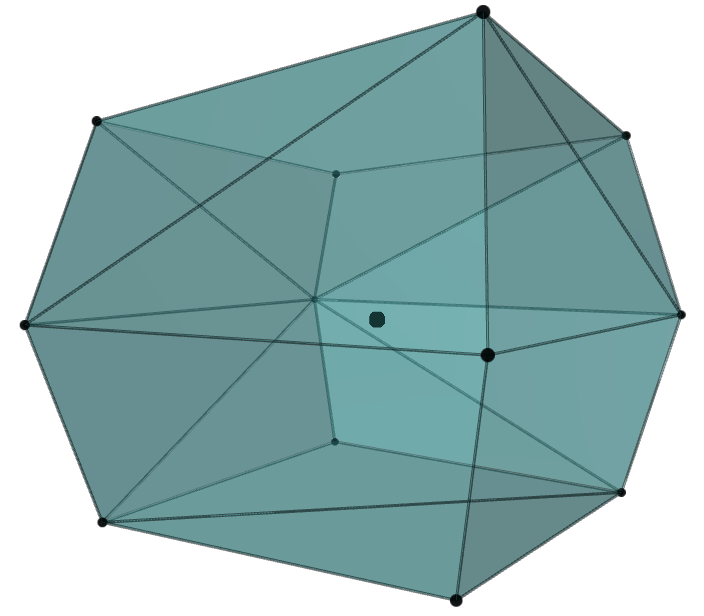}
        \end{minipage}
        \hspace{1em}
    \caption{Polytopes $R,P,Q$ (from left to right) with identical central hyperplane arrangement $\Hcal(\cdot)$, distinct combinatorial types, and where $P$ and $Q$ are pointwise c.s.e., but not to $R$ (\Cref{ex:pointwise_cse}).}
    \label{fig:pointwise_cse}
\end{figure}

\subsection{Hierarchy for central slices} \label{sec:hierarchy-central}

We now study which combinatorial and geometric properties of a polytope determine its central slices up to the following two definitions of ``same slices''.

\begin{definition}[central-slice-equivalence]
    Let $P,Q \subset \R^d$ be polytopes containing the origin in their interior. $P$ and $Q$
        are \emph{pointwise central-slice-equivalent (pointwise c.s.e)}
        if for every $u \in \Sph^{d-1}$ we have $P \cap u^\perp \simeq Q \cap u^\perp$.
        $P$ and $Q$ are \emph{setwise central-slice-equivalent (setwise c.s.e)} if the set of combinatorial types of central slices of $P$ agrees with that of $Q$.
\end{definition}

What does pointwise, or setwise, central-slice-equivalence say about polytopes $P,Q$, and conversely, what do polytopes $P,Q$ need to satisfy to be pointwise, or setwise, central-slice-equivalent?
Recall from \Cref{sec:reconstruction-central} that the combinatorics of central slices are governed by relatively open cells of the arrangement $\Hcal(P) = \{v^\perp \mid v \in \verts(P) \text{ and $v$ is not the origin}\}$.
However, if $v,w \in \verts(P) $ satisfy $w = \alpha v$ for some $\alpha <0$, then $v^\perp = w^\perp$ define the same hyperplane in $\Hcal(P)$. To retain this information, we also consider the \textit{signed} hyperplane arrangement $\Hcaltilde(P)$,
where each hyperplane $H_v := v^\perp$ comes with a positive side $H_v^+ := \{x \in \R^d \mid \langle v,x \rangle > 0\}$, and a negative side $H_v^- := \{x \in \R^d \mid \langle v,x \rangle < 0\}$. In particular, if $w = \alpha v$ with $\alpha <0$, then the hyperplanes $H_w$ and $H_{v}$ are distinct signed hyperplanes in the arrangement $\widetilde{\Hcal}(P)$.

We fix an ordering $v_1,\dots,v_n$ of the vertices of $P$. Then every relatively open cell $C$ of $\widetilde{\mathcal{H}}(P)$ can be labeled by the \emph{signed covector} $(\sgn(\langle u, v_i \rangle))_{i \in [n]} \in \{-,0,+\}^n$, which is independent of the choice of $u \in C$. The signed covector records on which side the cell $C$ is for each of the hyperplanes $v_i^\perp$, or, equivalently, on which side the vertex $v_i$ is for each of the hyperplanes $u^\perp$, for $u \in C$. We identify the \emph{oriented matroid} $\OM(\widetilde{\Hcal}(P))$ with the set of all signed covectors of the arrangement. 

If $P\simeq Q$ are combinatorially equivalent via an isomorphism $i:\Lcal(P) \to \Lcal(Q)$, then this induces a bijection $i^*:\Hcaltilde(P) \to \Hcaltilde(Q)$ given by $i^*(v^\perp) :=i(v)^\perp$, where the positive side of the hyperplanes is determined by $v \in \verts(P)$, and $i(v) \in \verts(Q)$, respectively.

\begin{definition}\label{def:LPinducingHP}
    Let $P \simeq Q$ be polytopes containing the origin in the interior. Let $i:\Lcal(P) \to \Lcal(Q)$ be an isomorphism of their face lattices, and  $i^*:\Hcaltilde(P) \to \Hcaltilde(Q)$ the corresponding bijection on the signed hyperplane arrangements.
    If $v^\perp = i^*(v^\perp)$ agree as signed hyperplanes for every $v \in \verts(P)$, then we say that $P \simeq Q$ \emph{induces (the equality)} $\Hcaltilde(P)=\Hcaltilde(Q)$. Similarly, if the oriented matroids of the signed hyperplane arrangements agree under the bijection $i^*$, then we say that $P \simeq Q$ \emph{induces (the equality)} $\OM(\Hcaltilde(P)) = \OM(\Hcaltilde(Q))$. 
\end{definition}

We will show that such a combinatorially induced equality of hyperplane arrangements will be sufficient to provide an equality of central slices. The following example shows that merely a combinatorial equivalence $P \simeq Q$ and an equality $\widetilde{\Hcal}(P) = \widetilde{\Hcal}(Q)$ is not sufficient, if the equality is not induced by the combinatorial isomorphism.

\begin{example}\label{ex:comb_equiv_and_H_not_induced2}
\label{ex:comb_equiv_not_ase}
    We construct two (centrally symmetric) polytopes obtained from the bipyramid over two different hexagons, by adding the antipodal pair of points $\pm (2/3, -2/3,1)$. Let
    \[
    P:= \conv \left\{  
        \pm \sma 1 \\ 0 \\ 0\strix, 
        \pm \sma 0 \\ 1 \\ 0\strix, 
        \pm \sma 1 \\ 1 \\ 0\strix, 
        \pm \sma 0 \\ 0 \\ 2\strix, 
        \pm \sma 2/3 \\ -2/3 \\ 1\strix\right\}
    \]
    and 
    \[
    Q := \conv \left\{  
        \pm \sma 3/2 \\ 0 \\ 0\strix, 
        \pm \sma 0 \\ 1 \\ 0\strix, 
        \pm \sma 1 \\ 1 \\ 0\strix, 
        \pm \sma 0 \\ 0 \\ 2\strix, 
        \pm \sma 2/3 \\ -2/3 \\ 1\strix\right\}.
    \]
    The base hexagon of $Q$ is different than that of $P$, by pushing out the vertices $\pm(1,0,0)$ to $\pm(3/2,0,0)$. This makes the extra points $\pm(2/3,-2/3,1)$ create different edge incidences at these vertices respectively. It yields a combinatorial isomorphism between $P$ and $Q$ which sends the apex $(0,0,2)$ in $P$ to $(3/2, 0,0)$ in $Q$. Moreover, $\widetilde{\mathcal H}(P) = \widetilde{\mathcal H}(Q)$. However, this equality is not \emph{induced} by the combinatorial isomorphism. To see this, note that the extra vertex $(2/3, -2/3, 1)$ has degree $5$ in $P$ and degree $4$ in $Q$. It can be checked that the slices of $P$ are $n$-gons up to $n=12$, whereas $Q$ also has a $14$-gon as a slice.
\end{example}
     \begin{figure}[H]
    \centering
    \hspace{8em}%
    \begin{minipage}{0.4\textwidth}
            \includegraphics[width=.6\textwidth]{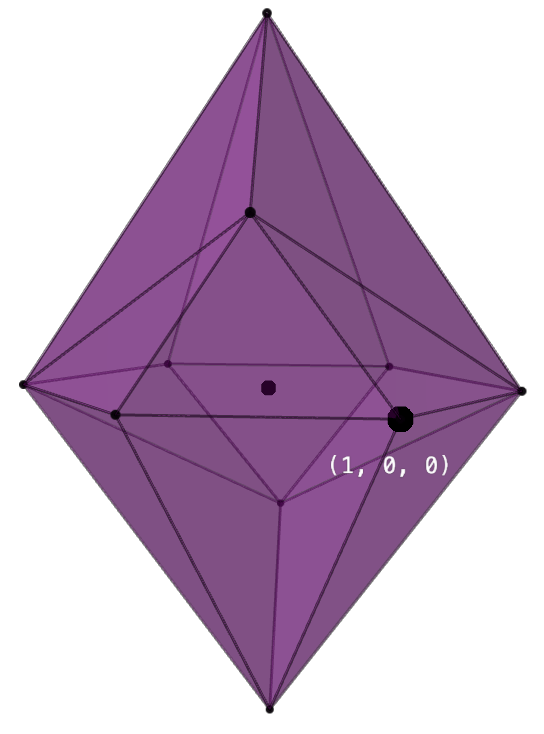}
        \end{minipage}
        \hfill%
        \begin{minipage}{0.4\textwidth}
            \includegraphics[width=.7\textwidth]{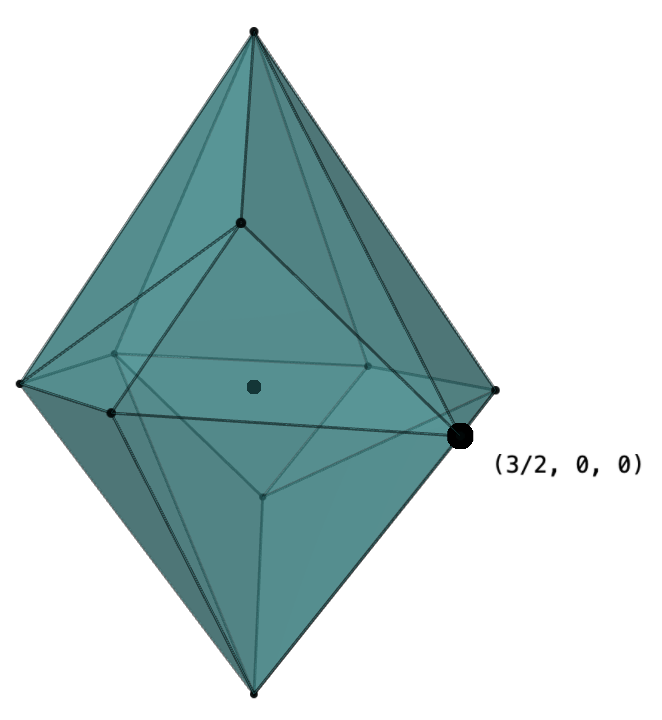}
        \end{minipage}
        \hspace{8em}
    \caption{Combinatorially equivalent polytopes $P,Q$ with $P \simeq Q$ not inducing  $\widetilde{\Hcal}(P)=\widetilde{\Hcal}(Q)$.}
\end{figure}

\begin{theorem}\label{thm:hierarchy-central}
    Let $d \geq 3$, and let $P,Q \subset \R^d$ be $d$-dimensional polytopes containing the origin in their interior.
    The following implications hold for every $d\ge3$. The counterexamples for the reverse non-implications are constructed in dimension $3$.
    \begin{center}
    \begin{tikzpicture}[
        every node/.style={align=center},
        bigprop/.style={align=center},
        prop/.style={text width=4.5cm, align=center}
    ]
    
    \node[bigprop] at (0,0) {$P \simeq Q$ inducing $\Hcaltilde (P) = \Hcaltilde (Q)$};
    
    \node at (-1.65,-0.65) {\rotatebox{-50}{$\Downarrow$}};
    \node at (-1.3,-0.65) {\rotatebox{-50}{$\not\Uparrow$}};
    \node at (1.3,-0.65) {\rotatebox{50}{$\Downarrow$}};
    \node at (1.65,-.65) {\rotatebox{50}{$\not\Uparrow$}};
    
    \node[prop] at (-2.6,-1.45) {$P\simeq Q$ inducing \\ $\OM(\Hcaltilde(P))=\OM(\Hcaltilde(Q))$};
    \node[prop] at (2.6,-1.45) {$P,Q$ pointwise c.s.e.};
    
    \node at (-3.7,-2.4) {\rotatebox{-25}{$\Downarrow $}};
    \node at (-3.4,-2.4) {\rotatebox{-25}{$\not\Uparrow$}};
    \node at (-1.85,-2.4) {\rotatebox{25}{$\Downarrow$}};
    \node at (-1.5,-2.4) {\rotatebox{25}{$\not\Uparrow$}};
    \node at (1.5,-2.4) {\rotatebox{-25}{$\Downarrow$}};
    \node at (1.85,-2.4) {\rotatebox{-25}{$\not\Uparrow$}};
    \node at (3.55,-2.4) {\rotatebox{35}{$\Downarrow$}};
    \node at (3.9,-2.4) {\rotatebox{35}{$\not\Uparrow$}};
    
    \node[prop] at (-4,-3.) {$P \simeq Q$};   
    \node[prop] at (-0,-3.) {$P,Q$ setwise c.s.e.};
    \node[prop] at (4,-3.) {$\mathcal H (P) = \mathcal H (Q)$};
    
    \end{tikzpicture}
    \end{center}
    When no arrows are depicted, then
    neither direction of implication holds $(\substack{\Large \not \Rightarrow \\ \Large \not \Leftarrow})$.
\end{theorem}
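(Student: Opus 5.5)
The plan is to prove the five downward implications directly from the description of slices in \Cref{sec:lattices}, and then to refute every reverse or diagonal implication with one of the $3$-dimensional examples from \Cref{sec:pathological-examples}, or with a simple rotation or perturbation.

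\textbf{Key lemma: the slice is determined by a covector.} Let $u \in \Sph^{d-1}$ have covector $\sigma=(\sgn\langle u,v_i\rangle)_i$. A face $F$ lies in $\Fcal^\subset_P(u^\perp)$ iff $\sigma$ vanishes on all vertices of $F$. It lies in $\Fcal^\circ_P(u^\perp)$ iff $\sigma$ takes both values $+$ and $-$ on the vertices of $F$. (If the nonzero signs on $F$ all agree, then $u^\perp$ supports $F$.) By \Cref{prop:face-lattice-of-slice}, the slice $P\cap u^\perp$ is the induced subposet on $\Fcal^\subset\cup\Fcal^\circ$. Hence its combinatorial type is a function of the pair $(\Lcal(P),\sigma)$, and this function commutes with lattice isomorphisms that relabel vertices.

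\textbf{Implications.} Each follows from the key lemma or is immediate.
\begin{itemize}
\item Top $\Rightarrow$ pointwise c.s.e.: if $P\simeq Q$ via $i$ induces $\Hcaltilde(P)=\Hcaltilde(Q)$, then for every $u$ the covector of $u$ for $P$ equals the covector of $u$ for $Q$ after relabelling by $i$. So the slices are combinatorially equivalent.
\item Top $\Rightarrow$ OM-induced: trivial.
\item OM-induced $\Rightarrow$ $P\simeq Q$: trivial.
\item OM-induced $\Rightarrow$ setwise c.s.e.: the bijection of covectors gives a bijection between the sets of slice types.
\item Pointwise $\Rightarrow$ setwise: trivial.
\item Pointwise $\Rightarrow$ $\Hcal(P)=\Hcal(Q)$: pointwise equivalence gives $\varphi_P=\varphi_Q$, and \Cref{prop:reconstruct-arrangement} recovers the arrangement from $\varphi$ (here $d\ge 3$ is used).
\end{itemize}

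\textbf{Non-implications, all in dimension $3$.}
\begin{itemize}
\item OM-induced $\not\Rightarrow$ top, and OM-induced $\not\Rightarrow$ pointwise: take a polytope with generic vertices and move one vertex $v$ slightly off its ray. The combinatorial type and the oriented matroid are stable, but $v^\perp$ tilts. Some fixed $u$ then changes covector, and one checks that the slice at that $u$ changes.
\item Pointwise $\not\Rightarrow$ top, pointwise $\not\Rightarrow$ OM-induced, and pointwise $\not\Rightarrow$ $P\simeq Q$: use \Cref{ex:pointwise_cse} ($P$ versus $Q$), or \Cref{ex:drums}. These pairs are pointwise c.s.e.\ but not combinatorially equivalent.
\item $P\simeq Q$ $\not\Rightarrow$ OM-induced: use \Cref{ex:comb_equiv_not_ase}. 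Its slice sets differ ($14$-gon), so by the setwise implication above no isomorphism can induce equal oriented matroids.
\item $P\simeq Q$ $\not\Rightarrow$ setwise: use \Cref{ex:perturbed_bipyramid}.
\item Setwise $\not\Rightarrow$ OM-induced and setwise $\not\Rightarrow$ $P\simeq Q$: use \Cref{ex:0sym-setwise-cse-not-comb-equiv}.
\item Setwise $\not\Rightarrow$ pointwise, setwise $\not\Rightarrow$ $\Hcal(P)=\Hcal(Q)$, and $P\simeq Q$ $\not\Rightarrow$ $\Hcal(P)=\Hcal(Q)$: take $Q=AP$ for a generic rotation $A$. The slices agree as sets by \Cref{lem:affine-transformations}, but not pointwise, and the arrangements differ.
\item $\Hcal(P)=\Hcal(Q)$ $\not\Rightarrow$ pointwise, and $\not\Rightarrow$ $P\simeq Q$: use $R$ versus $P$ in \Cref{ex:pointwise_cse}.
\item $\Hcal(P)=\Hcal(Q)$ $\not\Rightarrow$ setwise: I also plan to use $R$ versus $P$. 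In $P$ the new edge $v_{10}v_{11}$ is crossed by some central plane that meets the maximal number of edges. This should produce a polygon with more vertices than any slice of $R$; I will verify it by a direct count.
\end{itemize}

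\textbf{Main obstacles.} The first is making the key lemma precise, in particular the sign criterion for $\Fcal^\circ$ when some vertex signs are zero. The second is certifying the example-based claims, especially that $R$ and $P$ have different sets of slices. I would first try the explicit count of intersected edges plus contained vertices for a chosen $u$. If that fails, I would perturb $v_{12}$ further to create a clear extremal slice.
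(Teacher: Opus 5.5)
Your proposal is correct. The implications are argued as in the paper: sign vectors with \Cref{prop:face-lattice-of-slice}, transfer of covectors for the oriented-matroid condition, and \Cref{prop:reconstruct-arrangement}. Most non-implications also use the same examples, and where you deviate your choices are cleaner.

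The paper refutes the reverse of ``top $\Rightarrow$ OM-induced'' with two triangles, which are $2$-dimensional, and it never justifies that OM-induced does not imply pointwise c.s.e. Your rotation $Q=AP$ settles both, so you can drop the off-ray perturbation and its unproved ``one checks''. The isomorphism $F\mapsto AF$ induces equal oriented matroids, since the covectors of $\{Av\}$ at $u$ are those of $\{v\}$ at $A^\top u$. On the other hand, $\mathcal H(AP)=A\,\mathcal H(P)\neq\mathcal H(P)$ for generic $A$. By the proven implication ``pointwise c.s.e.\ $\Rightarrow$ $\mathcal H(P)=\mathcal H(Q)$'', the pair is therefore not pointwise c.s.e., and hence also fails the top condition. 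The same rotation replaces the computer-checked \Cref{ex:0sym-setwise-cse-not-comb-equiv} for setwise versus pointwise and setwise versus equal arrangements.

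You also do not need \Cref{ex:comb_equiv_not_ase}. The non-implication $P\simeq Q\not\Rightarrow$ setwise (\Cref{ex:perturbed_bipyramid}) together with ``OM-induced $\Rightarrow$ setwise'' already gives $P\simeq Q\not\Rightarrow$ OM-induced, which is the paper's route.

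Finally, the count you defer for $R$ versus $P$ in \Cref{ex:pointwise_cse}, which the paper only asserts, does go through. The vertices of $R$ lie on six lines through the origin. For generic $u=(a,b,c)$, the crossed edges of $R$ are as follows:
\begin{itemize}
\item the middle quadrilateral contributes $2$;
\item the six lateral diagonals contribute exactly $3$, one from each of the pairs $v_2v_5/v_5v_{11}$, $v_3v_5/v_5v_{10}$, $v_6v_{12}/v_7v_{12}$;
\item all remaining edges come in antipodal pairs that are crossed simultaneously.
\end{itemize}
This gives $f_0(R\cap u^\perp)=5+2(B+V)$. Here $B=2$ if $|c|<|a|+|b|$ and $B=0$ otherwise, and $V$ is the number of the inequalities $|c|>|a+b|$, $|c|>|a-b|$ that hold. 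Hence $B+V\le 3$, so generic central slices of $R$ have at most $11$ vertices. Non-generic slices have no more vertices than nearby generic ones, because each vertex of a limit polygon is a limit of vertices. At $u=(1,-\tfrac35,1)$ the slice of $R$ is an $11$-gon, and $v_{10}$, $v_{11}$ lie on opposite sides of $u^\perp$, so $P\cap u^\perp$ is a $12$-gon. Thus $R$ and $P$ are not setwise c.s.e.
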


\begin{proof}
    First we show the implications that hold true.
    Clearly $\Hcaltilde(P)=\Hcaltilde(Q)$ implies $\OM(\Hcaltilde(P))=
    \OM(\Hcaltilde(Q))$.
    Suppose $P\simeq Q$ via a combinatorial isomorphism $i$, that induces $\Hcaltilde(P)=\Hcaltilde(Q)$. Let $u^\perp$ be some central slice for $u \in \Sph^{d-1}$, and let
    $\sigma(P,u) = (\sgn v^\top u )_{v \in \verts(P)} \in \{ +,-,0 \}^{\verts(P)}$ be the sign vector of how $u$ separates vertices of $P$.
    Then $\sigma(Q,u) := (\sgn i(v)^\top u )_{v \in \verts(P)} = \sigma(P,u)$, since $i$ just scales each vertex of $P$ with a non-zero scalar. Hence $u^\perp$ intersects the same set of edges of $P$ and $Q$ (under the combinatorial isomorphism). By \Cref{prop:face-lattice-of-slice} we have $P \cap u^\perp \simeq Q \cap u^\perp$, hence $P$ and $Q$ are pointwise c.s.e. Similarly, suppose that the oriented matroids of $\Hcaltilde(P)$ and $\Hcaltilde(Q)$ agree. Let $u \in \Sph^{d-1}$, then the sign vector $\sigma(P,u)$
    is a covector of $\OM(\Hcaltilde(P))$, and corresponds to a non-empty, relatively open cell of $\Hcal(P)$. Since the oriented matroids agree, the same covector corresponds to a non-empty relatively open cell of $\Hcal(Q)$, i.e., there exists a $u'$ in this cone with $\sigma(Q,u')=\sigma(P,u)$. In particular $u^\perp$ intersects the same set of edges in $P$ as $(u')^\perp$ does in $Q$ (under the combinatorial isomorphism).
    Clearly pointwise c.s.e. implies setwise c.s.e. That pointwise c.s.e. recovers the hyperplane arrangements follows from \Cref{prop:reconstruct-arrangement}.

    For the implications that are false: Let $P$ and $Q$ be two distinct triangles, then generically $\Hcaltilde(P) \neq \Hcaltilde(Q)$, but the oriented matroids of their signed hyperplane arrangements agree. \Cref{ex:drums} (drums) as well as \Cref{ex:pointwise_cse} give examples of $P,Q$ that are pointwise c.s.e., but not combinatorially equivalent. The pair of polytopes $R,P$ or $R,Q$ in \Cref{ex:pointwise_cse} have the same hyperplane arrangements, but they are not setwise c.s.e. (hence also not pointwise c.s.e.).
    The regular and perturbed bipyramid in \Cref{ex:perturbed_bipyramid} are combinatorially equivalent, but their central hyperplane arrangements have distinct oriented matroids, and they are also not setwise c.s.e.
    Lastly, \Cref{ex:0sym-setwise-cse-not-comb-equiv}, a bipyramid over a square with a pair of stacked faces, and the three-cube, with two pairs of ``broken faces'', both have central slices with only $4,6$ and $8$ vertices, hence they are setwise c.s.e., but they are not pointwise c.s.e., nor combinatorially equivalent, and they have distinct central hyperplane arrangements (with distinct oriented matroids).
\end{proof}

\begin{remark}
 Note that, in particular, all positive implications hold for centrally symmetric examples, since they hold for all polytopes. All counterexamples proving the non-implications are centrally symmetric, except for \Cref{ex:drums} and \Cref{ex:pointwise_cse}. Thus, in principle, pointwise central-slice-equivalence may still imply $P \simeq Q$ when we restrict to the class of centrally symmetric polytopes. \Cref{cor:cs-simple-polytope-reconstructible} shows this implication for simple centrally symmetric polytopes whose vertices are in symmetric 3-general linear position.
\end{remark}

\subsection{Hierarchy for affine slices} \label{sec:hierarchy-affine}

In parallel to \Cref{sec:hierarchy-central}, 
we study which combinatorial and geometric properties of a polytope $P$ determine its affine slices. As above, we consider two notions of affine-slice-equivalence.

\begin{definition}[affine-slice-equivalence]
    Let $P,Q \subset \R^d$ be polytopes. $P$ and $Q$
        are \emph{pointwise affine-slice-equivalent (pointwise a.s.e)}
        if for every affine hyperplane $H$ one has $P \cap H \simeq Q \cap H$.
        $P$ and $Q$
        are \emph{setwise affine-slice-equivalent (setwise a.s.e)}
    if the set of combinatorial types of affine slices of $P$ agrees with that of $Q$.
\end{definition}

\begin{proposition}\label{prop:ase-properties}
Let $d \geq 3$, and let $P,Q \subset \R^d$ be $d$-dimensional polytopes.
Two polytopes are pointwise affine-slice-equivalent if and only if they are equal. Moreover, for every $d \geq 3$, combinatorially equivalent polytopes need not be setwise affine-slice-equivalent. Conversely, if $d=3$, setwise affine-slice-equivalent polytopes can be combinatorially distinct and can even have different $f$-vectors.
\end{proposition}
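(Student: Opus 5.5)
The proposition bundles three separate claims, and I will treat them in turn.

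\textbf{Pointwise a.s.e.\ means equality.} Pointwise a.s.e.\ means $P\cap H\simeq Q\cap H$ for every affine hyperplane $H$. In particular $P\cap H=\emptyset$ if and only if $Q\cap H=\emptyset$, since the empty polytope is combinatorially equivalent only to itself. So the set of affine hyperplanes missing $P$ equals the set missing $Q$, which is the observation about $\varphi_P^{-1}(0)$ made in \Cref{sec:reconstruction-central}. A convex body is the complement of the union of all hyperplanes disjoint from it: if $x\notin P$, strict separation yields a hyperplane through $x$ that misses $P$. Hence $P=Q$. The converse is trivial.

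\textbf{Combinatorially equivalent polytopes need not be setwise a.s.e., for every $d\ge3$.} I would use \Cref{ex:perturbed_bipyramid}. There $P\simeq Q$, every slice of $P$ has at most $8$ vertices, and $Q$ has a (central) slice with $12$ vertices. So $Q$ has an affine slice type, namely the $12$-gon, that does not occur for $P$.

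For $d>3$ I would pass to $\bipyr^{d-3}(P)\simeq\bipyr^{d-3}(Q)$. The slice $\bipyr^{d-3}(Q)\cap (u,0)^\perp\simeq\bipyr^{d-3}(12\text{-gon})$ has $12+2(d-3)$ vertices. It remains to show that no affine slice of $\bipyr^{d-3}(P)$ has that many vertices. I would prove this by bounding vertex counts inductively through the bipyramid layers: an affine slice of $\bipyr(R)$ has at most $\max(\text{slice bound for } R,\ \ldots)+2$ vertices, with the extra vertices coming from edges to the two apices. This is the main obstacle, because affine slices of bipyramids are not described in \Cref{sec:constructions}.

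If that bound fails, the fallback is a facet-based invariant: count the maximum number of vertices of a facet of a slice. Facets of slices of $\bipyr^{d-3}(P)$ are themselves slices of facets, and these are iterated pyramids over triangles or over slices of $P$, so they have few vertices. A slice containing $\bipyr^{d-4}$ of a $12$-gon as a facet then cannot occur.

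\textbf{In $d=3$, setwise a.s.e.\ does not force the same combinatorics or $f$-vector.} I would cite \Cref{ex:0sym-setwise-cse-not-comb-equiv}. Both polytopes have exactly the $n$-gons with $n\le10$ as their affine slice types, together with the degenerate faces: points, segments and polygons with fewer vertices, all of which appear anyway. They are combinatorially distinct.

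For different $f$-vectors, I would compare their vertex counts. The stacked square bipyramid has $6+2=8$ vertices, but it has $18$ edges against the cube's $12$ once its facets are broken into triangles, so the $f$-vectors differ. I would make this precise by listing the vertices of $Q$: the perturbed cube has $8$ vertices and more edges coming from the broken faces, so I would verify that $f_1$ differs, for instance $18$ versus $14$. The slice sets would be certified computationally, as the example states.
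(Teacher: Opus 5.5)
Your first part is exactly the paper's argument. Your third part also matches the paper, which simply cites \Cref{ex:0sym-setwise-cse-not-comb-equiv}. One small correction there: the proof of \Cref{thm:hierarchy-central} describes the second polytope as the cube with \emph{two pairs} of broken faces, so the $f$-vectors are $(8,18,12)$ and $(8,16,10)$. If all cube facets were triangulated the two $f$-vectors would coincide, so your ``$18$ versus $14$'' should be replaced by this count.

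For the second claim with $d>3$, you correctly notice something the paper glosses over. The paper only cites \Cref{ex:perturbed_bipyramid}, whose iterated-bipyramid argument exhibits a single hyperplane with non-equivalent slices. That is a failure of \emph{pointwise}, not setwise, equivalence. However, neither of your two fixes works.

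\textbf{The vertex bound is false already for $d=4$.} Take $H=\{x\in\R^4 \mid x_1+x_2+\tfrac1{10}x_3+2x_4=\tfrac12\}$.
\begin{itemize}
\item No vertex of $\bipyr(P)$ lies on $H$. The vertices $e_1,e_2,e_1+e_2,e_4$ are on the positive side and all others on the negative side.
\item Since $\pm e_4$ are separated, each of the $8$ vertices of $P$ contributes exactly one crossed apex edge.
\item $H$ also crosses $8$ edges of $P$: two hexagon edges and six edges at $\pm e_3$.
\end{itemize}
So this slice has $16>14$ vertices. The apices add up to $f_0$ of the base, not $2$.

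\textbf{The facet invariant also fails.} The facets of $\bipyr(R)$ are the pyramids $\conv(F,\pm e)$ over facets $F$ of $R$, never $R$ itself. So $\bipyr^{d-4}(12\text{-gon})$ is not a facet of $\bipyr^{d-3}(12\text{-gon})$; the latter is simplicial, and ``largest facet of a slice'' distinguishes nothing. Your description of the faces (``iterated pyramids over triangles or over slices of $P$'') is the one for pyramids, not bipyramids.

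That observation is in fact the repair: replace bipyramids by pyramids. Since $P\simeq Q$, also $\pyr^{d-3}(P)\simeq\pyr^{d-3}(Q)$. The hyperplane $\{x\in\R^d \mid \langle u,(x_1,x_2,x_3)\rangle=0\}$ contains all apices. Hence $\pyr^{d-3}(Q)$ has the slice $\pyr^{d-3}(Q\cap u^\perp)$, which for $d\ge4$ has a $12$-gon as a $2$-face.

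On the other side, by \Cref{lem:types-of-faces-in-slices}, every $2$-face of a slice of $\pyr^{d-3}(P)$ is one of two things:
\begin{itemize}
\item a $2$-face of $\pyr^{d-3}(P)$, hence a triangle; or
\item a slice of a $3$-face of $\pyr^{d-3}(P)$.
\end{itemize}
The $3$-faces are the copy of $P$ and tetrahedra, so every $2$-face of every slice has at most $8$ vertices. Hence $\pyr^{d-3}(P)$ and $\pyr^{d-3}(Q)$ are not setwise affine-slice-equivalent for every $d\ge4$, and $d=3$ is the example itself.

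If you want to keep bipyramids, you need a genuinely finer, link-type invariant, and that requires its own proof. For $d=4$, for instance, one would bound vertex degrees in slices of $\bipyr(P)$ and compare with the two degree-$12$ apices of $\bipyr(12\text{-gon})$.
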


\begin{proof}
    For the first statement, note that if $P,Q$ are pointwise affine-slice-equivalent, then $P \cap H = \emptyset$ if and only if $Q \cap H = \emptyset$. Thus, $\R^d \setminus P = \R^d \setminus Q$, and hence $P = Q$.
    For every $d \geq 3$, \Cref{ex:perturbed_bipyramid} gives combinatorially equivalent polytopes that are not setwise affine-slice-equivalent. Conversely, \Cref{ex:0sym-setwise-cse-not-comb-equiv} gives, in dimension $3$, setwise affine-slice-equivalent polytopes that are combinatorially distinct and have different $f$-vectors.
\end{proof}

Recall from \Cref{sec:lattices} that the combinatorial type of the slice is uniquely determined by the set of edges of $P$ which are intersected by the affine hyperplane $H_u(\beta) = \{x \mid \langle x,u\rangle + \beta = 0\}$, defining the slice $P \cap H_u(\beta) = \{ x \in P \mid \langle (x,1), (u,\beta)\rangle = 0\}$. It follows, e.g., from \cite{Brandenburg2025} that the set of hyperplanes intersecting the same sets of edges are characterized by those normals $(u,\beta)$ contained in the same relatively open cell of the hyperplane arrangement $\mathcal H(P \times \{1\}) = \{(v,1)^\perp \mid v \text{ vertex of } P\}$. In contrast to the last section, $\Hcaltilde(P \times \{1\})$ can be determined from $\Hcal(P \times \{1\})$ since no two vertices of $P \times \{1\}$ define the same hyperplane, and each hyperplane is oriented such that $e_{d+1}$ is on its positive side, and $-e_{d+1}$ on its negative side.

Recall that the \emph{oriented matroid of a polytope} $P$, denoted $\OM(P)$, is the oriented matroid of the affine point configuration $\verts(P)$, and that the oriented matroid determines the combinatorial type, but not vice versa. We prove the following.

\begin{theorem}\label{th:affine-hierarchy} Let $P,Q \subset \R^d$ be $d$-dimensional polytopes. The following implications and non-implications hold for every $d\ge3$, unless stated otherwise.
\begin{center}
    \centering
    $P = Q$ $\iff$
    $P,Q$ pointwise a.s.e
    $\iff$
    $\mathcal H(P \times \{1\}) = \mathcal H(Q \times \{1\})$ \\
    $\Downarrow \ \not\Uparrow$ \\
    $\OM(P) = \OM(Q)$ $\iff$
    $ \OM (\mathcal H(P \times \{1\})) = \OM(\mathcal H(Q \times \{1\}))$ \\
    $\Downarrow \ \not\Uparrow$ \hspace{15em}  $\Downarrow \ \not\Uparrow$ \\
    $P, Q$ 
    setwise a.s.e 
    \hspace{4em} $\Large\substack{\Large \not \Rightarrow \text{ if } d=3\\ \hspace{-2.4em} \Large \not \Leftarrow}$ \hspace{1.5em} $P\simeq Q$ \hspace*{1em}
    \end{center}
\end{theorem}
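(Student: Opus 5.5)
The proof goes through the diagram row by row, proving each positive implication and then giving a counterexample for each non-implication.

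\emph{Top row.} The equivalence $P=Q \iff$ pointwise a.s.e.\ is the first part of \Cref{prop:ase-properties}. For $P=Q \Rightarrow \mathcal H(P\times\{1\})=\mathcal H(Q\times\{1\})$ there is nothing to show. For the converse, note that the hyperplanes $(v,1)^\perp$ for distinct $v$ are pairwise distinct, since $(v,1)$ and $(w,1)$ are never parallel for $v\neq w$. Each such hyperplane determines its normal line, and this line meets $\R^d\times\{1\}$ exactly in $(v,1)$. Hence the arrangement recovers the vertex set, and so $P=\conv(\verts(P))=Q$.

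\emph{Middle row.} I plan to show that the oriented matroid of the vector configuration $\{(v,1)\}$ is by definition $\OM(P)$. Its covectors are the sign vectors $(\sgn\langle (v,1),(u,\beta)\rangle)_v$, and these are exactly the covectors of the arrangement $\mathcal H(P\times\{1\})$, with the orientation fixed so that $e_{d+1}$ lies on the positive side. So the two equalities coincide, possibly after matching a vertex labeling. The vertical implication from the top row is then immediate.

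\emph{Bottom row.} The key steps are the two downward arrows.
\begin{itemize}
\item $\OM(P)=\OM(Q)\Rightarrow P\simeq Q$. This is the classical fact that the oriented matroid determines the face lattice: a set of vertices is a face iff it is the zero set of a covector whose other entries are all $+$.
\item $\OM(\mathcal H(P\times\{1\}))=\OM(\mathcal H(Q\times\{1\}))\Rightarrow$ setwise a.s.e. Here I follow the argument in the proof of \Cref{thm:hierarchy-central}. For any $(u,\beta)$, the sign vector $\sigma=(\sgn(\langle u,v\rangle+\beta))_v$ is a covector, and it is realized in $Q$ by some $(u',\beta')$. Equal oriented matroids also give $P\simeq Q$ by the previous item. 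So under this isomorphism the same vertices lie in the hyperplane and the same edges are properly intersected. By \Cref{lem:upward-downward-closure}, \Cref{rem:minimal-elements} and \Cref{prop:face-lattice-of-slice}, $P\cap H_u(\beta)\simeq Q\cap H_{u'}(\beta')$. By symmetry the sets of slice types agree.
\end{itemize}

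\emph{Non-implications.}
\begin{itemize}
\item $\OM\not\Rightarrow$ equality: take two distinct simplices, or any nontrivial translate.
\item $\simeq\not\Rightarrow\OM$ and $\simeq\not\Rightarrow$ setwise a.s.e.: the perturbed bipyramids of \Cref{ex:perturbed_bipyramid} work in every $d\ge3$ via iterated bipyramids. They are combinatorially equivalent but not setwise a.s.e., so their oriented matroids must differ by the implication just proved.
\item Setwise a.s.e.\ $\not\Rightarrow\simeq$ for $d=3$: this is \Cref{ex:0sym-setwise-cse-not-comb-equiv}. It also shows that setwise a.s.e.\ does not imply equal oriented matroids.
\end{itemize}

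The main obstacle is the non-implication ``setwise a.s.e.\ $\not\Rightarrow \OM(P)=\OM(Q)$'' for all $d\ge3$, not just $d=3$. I would try to lift a $3$-dimensional pair to higher dimensions so that slice types are preserved setwise, for example via pyramids and \Cref{prop:pyramid-slices}. Since slabs appear there, it would be simpler to find a pair of realizations of the same combinatorial type whose oriented matroids differ but whose sets of slice types agree. Before constructing anything, I would check whether the stated diagram only requires this non-implication in $d=3$, as the label ``if $d=3$'' suggests.
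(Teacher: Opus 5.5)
Your argument follows the paper's proof step by step. The shared steps are: recovering $v$ from the normal line of $(v,1)^\perp$; identifying $\OM(\mathcal H(P\times\{1\}))$ with the oriented matroid of the homogenized vertices; using covectors for both downward arrows; and using the same examples for the non-implications. In places you are more careful than the paper. Your translate example for $\OM(P)=\OM(Q)\not\Rightarrow P=Q$ works in every $d$, whereas the paper uses intervals in $\R^1$ and a perturbed simplicial polytope. You also justify, via the contrapositive of ``$\OM\Rightarrow$ setwise a.s.e.'', that the perturbed bipyramids have different oriented matroids, which the paper only asserts.

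The step you flag is not resolved in the paper either. The paper derives ``setwise a.s.e.\ $\not\Rightarrow \OM(P)=\OM(Q)$'' only from \Cref{prop:ase-properties}, i.e.\ from the $3$-dimensional \Cref{ex:0sym-setwise-cse-not-comb-equiv}. It explicitly says it is unclear whether that example generalizes. The qualifier ``if $d=3$'' in the diagram belongs only to the horizontal arrow setwise a.s.e.\ $\not\Rightarrow P\simeq Q$, not to the vertical $\not\Uparrow$ below $\OM(P)=\OM(Q)$. So for $d\ge 4$ this non-implication is stated but not proved, and your proposal is exactly as complete as the paper's argument on this point.

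Your alternative strategy does work in $d=3$, even within one combinatorial type. Let $H_1$ be the regular hexagon with vertices $v_1,\dots,v_6$ in cyclic order, so $v_4=-v_1$ and $v_5=-v_2$. Let $H_2$ be a hexagon in which no two segments between vertices are parallel. By (the proof of) \Cref{prop:prism-slices}, the slice types of $H_i\times[-1,1]$ are exactly the slab types of $H_i$ together with prisms over slices of $H_i$. For every convex hexagon the slab types are the empty set, a point, a segment, and $n$-gons for $3\le n\le 8$. Hence the two prisms are setwise a.s.e.\ and combinatorially equivalent.

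The oriented matroids nevertheless differ. The points $(v_1,1),(v_2,1),(v_4,-1),(v_5,-1)$ are coplanar in the regular prism. In the other prism, $(a,1),(b,1),(c,-1),(d,-1)$ are coplanar only if $ab\parallel cd$. So the regular prism has strictly more coplanar $4$-subsets of vertices. Lifting such a pair to $d\ge 4$ by prisms or pyramids again requires comparing slab types of $3$-polytopes, which is the difficulty you anticipated.
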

    
\begin{proof}
    \Cref{prop:ase-properties} implies the equivalence between $P = Q$ and
    $P,Q$ being pointwise a.s.e. If $P=Q$, then $\mathcal H(P \times \{1\}) = \mathcal H(Q \times \{1\})$. To show the converse, note that every hyperplane in $\mathcal H(P \times \{1\})$ has normal vector with non-zero last coordinate, and that no two vertices of $P \times \{1\}$ can define the same hyperplane. We can thus recover the vertices of $P$ uniquely by the normal vectors of each hyperplane scaled such that their last coordinates are $1$.
    If $P = Q$, then they have the same oriented matroid. For an example illustrating that the converse does not hold, consider, e.g., intervals $P = \conv(0,1), Q = \conv(0,2) \subset \R^1$. A counterexample in general dimension is given by any simplicial polytope $P$, where $Q$ is a generic perturbation of its vertices. For the equivalence in the second line: the oriented matroid of $\mathcal H(P \times \{1\})$ is the oriented matroid of the vector configuration $\{(v,1) \colon v \in \verts(P) \}$. Linear dependencies of $\{ (v,1) \colon v \in \verts(P) \}$ are in one-to-one correspondence with affine dependencies of the points $\{v \colon v \in \verts P\}$, hence the two oriented matroids agree.
    If $P,Q$ have the same oriented matroid, then, in particular, their covectors agree. The covectors are sign vectors, encoding the possible separations of vertices by hyperplanes into sides $\{+,-,0\}$. On the one hand, this recovers the combinatorial type of the polytope. On the other hand, it encodes the linear separations of the vertices of the polytope. These linear separations determine, combinatorially, which edges can be intersected simultaneously by a hyperplane. If $P,Q$ have the same oriented matroid, the set of covectors determines the combinatorial types of their slices, so $P,Q$ are setwise affine-slice-equivalent. 
    By \Cref{prop:ase-properties}, if $P,Q$ are setwise affine-slice-equivalent, then this does not imply that $P,Q$ are combinatorially equivalent. In particular, $P,Q$ do not have to have the same oriented matroid. The bipyramid and perturbed bipyramid in \Cref{ex:perturbed_bipyramid} are an example of polytopes with the same combinatorial type but distinct oriented matroids.
\end{proof}

\section{Combinatorial versions of classical slicing problems}\label{sec:slicing-problems}

In this section we study whether inequalities between the face numbers of slices imply inequalities between the face numbers of the original polytopes. We formulate this question in analogy with the Busemann-Petty problem and Bourgain's slicing problem. Our main result is negative: for general polytopes, these combinatorial analogues fail not only in their exact form, but also up to constants depending on the dimension. The failure persists for both central and affine sections.
We first identify classes of polytopes for which the combinatorial Busemann-Petty statement (\Cref{problems}\labelcref{prob:BP})~holds.

\begin{proposition}\label{prop:affirmative-BP-pair}
    Let $P,Q \subset \R^d$ be $d$-dimensional polytopes containing the origin in their interiors, let $k \in [d-1]$ and assume that $f_{k-1}(Q \cap u^\perp) \leq f_{k-1}(P \cap u^\perp)$ holds for all $u \in \Sph^{d-1}$. If there exists a central hyperplane which properly intersects all $k$-dimensional faces of $Q$ then $f_{k}(Q) \leq f_k(P)$. 
\end{proposition}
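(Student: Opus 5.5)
The plan is a chain of three inequalities obtained from a single well-chosen central hyperplane. Let $u_0 \in \Sph^{d-1}$ be such that $u_0^\perp$ properly intersects every $k$-dimensional face of $Q$. We will show
\[
f_k(Q) \leq f_{k-1}(Q \cap u_0^\perp) \leq f_{k-1}(P \cap u_0^\perp) \leq f_k(P).
\]
The middle inequality is exactly the hypothesis evaluated at $u = u_0$. The last inequality is \Cref{lemma:fkmin1_of_slice_leq_fk_of_poly} applied to $P$ and the hyperplane $H = u_0^\perp$.

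So the only real work is the first inequality. We construct an injection from the $k$-faces of $Q$ into the $(k-1)$-faces of $Q \cap u_0^\perp$ via $G \mapsto G \cap u_0^\perp$.

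For each $k$-face $G$ of $Q$, the assumption gives $G \in \Fcal^\circ_Q(u_0^\perp)$. By \Cref{lem:types-of-faces-in-slices}(ii), $G \cap u_0^\perp$ is then a face of the slice of dimension $\dim G - 1 = k-1$.

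For injectivity we use the canonical embedding $\varphi_{u_0^\perp}$. The face $G \cap u_0^\perp$ meets $\relint(G)$. Hence the inclusion-minimal face of $Q$ containing it is $G$ itself: any face of $Q$ containing a relative interior point of $G$ must contain $G$. Thus $\varphi_{u_0^\perp}(G \cap u_0^\perp) = G$. This recovers $G$ from its image, so distinct $k$-faces give distinct $(k-1)$-faces of the slice. (Equivalently, this is the bijection of \Cref{prop:face-lattice-of-slice} restricted to $\Fcal^\circ$.)

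The step needing care is precisely this identification of $G$ as the minimal face containing $G \cap u_0^\perp$, i.e.\ that two different $k$-faces cannot cut out the same $(k-1)$-face of the slice. Once that is settled, the three inequalities combine to give $f_k(Q) \leq f_k(P)$.
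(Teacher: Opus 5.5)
Your proof is correct and follows the paper's argument. You fix the hyperplane $u_0^\perp$, bound $f_k(Q)$ by $f_{k-1}(Q\cap u_0^\perp)$ via \Cref{lem:types-of-faces-in-slices}, apply the hypothesis at $u_0$, and finish with \Cref{lemma:fkmin1_of_slice_leq_fk_of_poly}. The only difference is that the paper asserts the equality $f_{k-1}(Q\cap u^\perp)=f_k(Q)$, noting that $u^\perp$ contains no $(k-1)$-face of $Q$. You prove just the inequality you need, with an explicit injectivity argument.
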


\begin{proof}
    Let $u \in \Sph^{d-1}$ such that $u^\perp$ properly intersects all $k$-dimensional faces of $Q$. In particular $u^\perp$ cannot contain any $(k-1)$-dimensional face of $Q$. Then, by \Cref{lem:types-of-faces-in-slices}  we have $f_{k-1}(Q \cap u^\perp) = f_k(Q)$.
    On the other hand, \Cref{lemma:fkmin1_of_slice_leq_fk_of_poly} gives $f_{k-1}(P \cap u^\perp) \leq f_k(P)$.
\end{proof}

We point out that the class of $Q$ satisfying the assumption of \Cref{prop:affirmative-BP-pair} is non-empty. For $k=d-1$, examples are the $d$-dimensional cube $[-1,1]^d $ (where all central hyperplanes which do not separate two opposite facets intersect all facets) or pyramids (where a hyperplane which cuts off an edge incident to the apex satisfies the assumptions).

\begin{remark}
    Khovanskii \cite[Corollary~4.5]{Khovanskii06:SectionsPolytopes} shows that for a simple $d$-dimensional polytope $P$ and a generic hyperplane $H$ there exists a $\lfloor d/2 \rfloor$-face of $P$ that $H$ does not intersect.  That means, by \Cref{lem:upward-downward-closure}, that for all $k \leq \lfloor d/2 \rfloor$ there does not exist a hyperplane $H$ (generic or non-generic) that intersects all $k$-faces of $P$ in their relative interior. In view of this, Khovanskii calls a generic slice of a simple $d$-polytope \textit{successful} if it intersects all $\lfloor d/2\rfloor +1$-faces in their relative interior. (By \Cref{lem:upward-downward-closure} this is equivalent to intersecting all $>d/2$-dimensional faces in their relative interior.)
\end{remark}

We now turn to constructing counterexamples to each of the problems in \Cref{problems}. We do this by constructing two sequences of polytopes, a sequence of polytopes with few faces, whose slices have relatively many faces (a \emph{big-slice sequence}), and a sequence of polytopes with many faces, whose slices have relatively few faces (a \emph{small-slice sequence}). Any such sequence will provide a counterexample. We then provide an explicit small-slice sequence in dimension $2$, and give a construction to obtain a small-slice sequence of dimension $d$ from a small-slice sequence of dimension $d-1$ (the \emph{globe construction}). For the big-slice sequence, we provide a sequence of polytopes in dimension $3$, and show that bipyramids over a big-slice sequence in dimension $d-1$ yield a big-slice sequence in dimension $d$. 

Recall that for two functions $f,g:\N \to \N$ we write $f \in \Ocal(g)$, if there exists a constant $c>0$, and an $N_0 \in \N$ such that $f(n) \leq c\cdot g(n)$ for all $n \geq N_0$. Analogously, we write $f \in \Omega(g)$ if there exists a constant $c>0$, and $N_0 \in \N$ such that $f(n) \geq c \cdot g(n)$ for all $n \geq N_0$. Then, $ \Theta(g) = \Ocal(g) \cap \Omega(g)$. 

We define our big-slice sequence and small-slice sequence at the level of generality that our upcoming constructions satisfy.

\begin{definition}[big-slice sequence and small-slice sequence]
    Let $(P_n)_{n \in \N}$ be a sequence of $d$-dimensional polytopes in $\R^d$. 
    We call $(P_n)_{n \in \N}$ a \emph{big-slice sequence} if the following conditions hold:
    \begin{enumerate}[label=(\roman*)]
        \item For every $0 \leq k \leq d-1$ we have $f_k(P_n) \in \Theta(n)$, and
        \item for every $0 \leq k \leq d-2$ we have $\displaystyle \min_{u \in \Sph^{d-1}} f_k(P_n \cap u^\perp) \in \Omega(n)$.
    \end{enumerate}
    We call $(P_n)_{n \in \N}$ a \emph{small-slice sequence} if 
    \begin{enumerate}[label=(\roman*)]
        \item for every $0 \leq k \leq d-1$ we have $f_k(P_n) \in \Theta(n^{d-1})$, and
        \item for $0 \leq k \leq d-2$ we have $\displaystyle \max_{\substack{H \subset \R^d \\ \text{aff. hyperplane}}} f_k(P_n \cap H) \in \mathcal{O}(n^{d-2})$.
    \end{enumerate}
\end{definition}

We now show that a pair of small-slice and big-slice sequences are enough to disprove \Cref{problems}. This serves as the blueprint for what follows in this section.

\begin{theorem}[Slice-sequences are counterexamples to combinatorial Bourgain]
    For every dimension $d \geq 3$, every $k \in [d-1]$, for every constant $C > 0$, every big-slice sequence $(P_n)_{n \in \N}$, and every small-slice sequence $(Q_m)_{m \in \N}$, there exist $n,m \in \N$ such that $f_{k-1}(Q_m \cap u^\perp) \leq f_{k-1}(P_n \cap u^\perp)$ for every $u \in \Sph^{d-1}$, and $f_k(Q_m) > C \cdot f_k(P_n)$.
    \label{prop:bp_counterexample}
\end{theorem}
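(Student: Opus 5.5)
The argument is a pure asymptotic comparison: couple the two indices so that the slice inequality holds uniformly in $u$, while the face numbers of the polytopes differ by a growing factor. Fix $d\ge 3$, $k\in[d-1]$, $C>0$, a big-slice sequence $(P_n)$ and a small-slice sequence $(Q_m)$. From the definitions we extract explicit constants:
\begin{itemize}
\item $c_1>0$ and $N_1$ with $\min_{u} f_{k-1}(P_n\cap u^\perp)\ge c_1 n$ for $n\ge N_1$. This uses condition (ii) of big-slice with index $k-1\in\{0,\dots,d-2\}$, valid since $k\le d-1$.
\item $c_2$ with $f_k(P_n)\le c_2 n$ for large $n$, from condition (i) with $k\le d-1$.
\item $c_3$ with $\max_H f_{k-1}(Q_m\cap H)\le c_3 m^{d-2}$ for large $m$. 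In particular this bounds $f_{k-1}(Q_m\cap u^\perp)$ for every central hyperplane, since central hyperplanes are affine hyperplanes.
\item $c_4>0$ with $f_k(Q_m)\ge c_4 m^{d-1}$ for large $m$.
\end{itemize}

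The key step is the coupling of $n$ to $m$. For each large $m$, set
\[
n=n(m)=\max\bigl(N_1,\lceil (c_3/c_1)\, m^{d-2}\rceil\bigr).
\]
Then for every $u\in\Sph^{d-1}$,
\[
f_{k-1}(Q_m\cap u^\perp)\le c_3 m^{d-2}\le c_1 n\le f_{k-1}(P_n\cap u^\perp),
\]
which is the required slice inequality. For this choice we have $n\le (c_3/c_1)m^{d-2}+1$, so
\[
f_k(P_n)\le c_2 n\le c_2(c_3/c_1)m^{d-2}+c_2.
\]
Meanwhile $f_k(Q_m)\ge c_4 m^{d-1}$. The ratio $f_k(Q_m)/f_k(P_n)$ is therefore at least of order $m$, which tends to infinity. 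Choosing $m$ large enough gives $f_k(Q_m)>C\cdot f_k(P_n)$.

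The only delicate points are bookkeeping. First, the $\Theta$/$\Omega$/$\mathcal O$ bounds hold only beyond thresholds $N_0$. We handle this by taking $m$ large, which makes $n(m)$ large as well because $d-2\ge 1$, so every threshold is eventually exceeded. Second, the index shift $k-1$ in the slice conditions must lie in the allowed range $0\le k-1\le d-2$, which holds because $k\in[d-1]$.

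There is no real obstacle. The gap in exponents ($n^{1}$ versus $m^{d-1}$ after substituting $n\sim m^{d-2}$) is exactly what makes the argument work, since it leaves a surplus factor of $m$ in the ratio.
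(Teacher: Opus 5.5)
Your proposal is correct and follows essentially the same argument as the paper: it extracts the four asymptotic constants and takes $n$ of order $m^{d-2}$, so that the slice inequality holds uniformly in $u$ while $f_k(Q_m)/f_k(P_n)$ grows like $m$. Your explicit choice $n(m)=\max\bigl(N_1,\lceil (c_3/c_1)m^{d-2}\rceil\bigr)$, together with your treatment of the thresholds, is a slightly more careful version of the paper's choice of $n$ in the interval $\frac{C_2'}{C_1'}m^{d-2}\le n<\frac{C_2}{C C_1}m^{d-1}$.
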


\begin{proof}
    By definition, there exist constants $C_1, C_2, C_1', C_2'$ such that for all central hyperplanes $u^\perp$, for all $k \in [d-1]$ and for sufficiently large $n,m \in \N$ we have
    \begin{center}\vspace{-1.5em}
    \begin{minipage}{0.4\textwidth}
        \begin{align*}
        f_k(P_n) &\leq C_1 n \\
        f_k(Q_m) &\geq C_2 m^{d-1}
    \end{align*}
    \end{minipage}
    \begin{minipage}{0.4\textwidth}
        \begin{align*}
        f_{k-1}(P_n \cap u^\perp) &\geq C_1' n \\
        f_{k-1}(Q_m \cap u^\perp) &\leq C_2' m^{d-2} \ .
    \end{align*}
    \end{minipage}
    \end{center}

    In order to find $P_n$ and $Q_m$ where  $C \cdot f_k(P_n) < f_k(Q_m)$ and $f_{k-1}(P_n \cap u^\perp) \geq f_{k-1}(Q_m \cap u^\perp)$ for all central hyperplanes $u^\perp$, it suffices to choose $n, m \in \N$ such that $C C_1 n < C_2 m^{d-1}$ and $C_2' m^{d-2} \leq C_1' n$. This is equivalent to 
    \[ \frac{C_2'}{C_1'} m^{d-2} \leq n < \frac{C_2}{C C_1} m^{d-1} \ . \]
    Since $d \geq 3$, the expression on the right-hand side grows asymptotically faster in $m$ than the expression on the left-hand side. Hence, for sufficiently large $m$ there exists a choice of $n$ satisfying these constraints.
\end{proof}

It thus suffices to construct a pair of such sequences.
In the following, we show that for a big-slice sequence $(P_n)_{n \in \N}$ and a small-slice sequence $(Q_m)_{m \in \N}$ of $d$-dimensional polytopes, the sequences $(\bipyr (P_n))_{n \in \N}$ and $(\globe(m, Q_m))_{m \in \N}$ are, respectively, a big-slice sequence and a small-slice sequence of $(d+1)$-dimensional polytopes. We will also give explicit examples of a big-slice sequence and a small-slice sequence in $\R^3$. Combining this with \Cref{prop:bp_counterexample} and induction yields a counterexample to the combinatorial versions of the Busemann-Petty problem and Bourgain's slicing problem in every dimension $d \geq 3$.

\subsection{Small-slice sequences and globes}\label{sec:small-slice}

To make the following definition more intuitive, note that for $R \in (-1,1)$ defining the affine hyperplane
$H = \{x \in \R^{d+1} \mid x_{d+1} = R\}$, the slice $\Sph^d \cap H$ of the $d$-dimensional unit sphere is a $(d-1)$-dimensional sphere of radius $\sqrt{1-R^2}$.

\begin{definition}[Globes]\label{def:globes}
    For a polytope $P \subset \R^d$ with the origin in its interior, and $n \geq 2$, we define the \emph{$n$-globe over $P$} as 
    \[ \globe(n,P) := \conv\left( \sqrt{1-\left(\frac{k}{n}\right)^2} \cdot P \times \Bigl\{\frac{k}{n}\Bigr\} \mid k \in \Z, -n+1 \leq k \leq n-1\right) \subset \R^{d+1} \ . \]

    For $P_n \subseteq \R^2$, the regular $n$-gon with vertices on the unit circle, we call
    \[ G^d(n) := \underbrace{\globe(n, \globe(\ldots (n, \globe(n, P_n)) \ldots ))}_{d-2 \text{ times}} \subseteq \R^d \]
    the \emph{$d$-dimensional globe}, i.e., it is obtained by taking $d-2$ iterative $n$-globes over the regular $n$-gon.
\end{definition}

In \Cref{fig:3-globe} we depict the three-dimensional globe over the $14$-gon, $G^3(14)$. We begin with examining the facial structure of a globe.
\begin{figure}[ht!]
    \centering
    \includegraphics[width=0.3\linewidth]{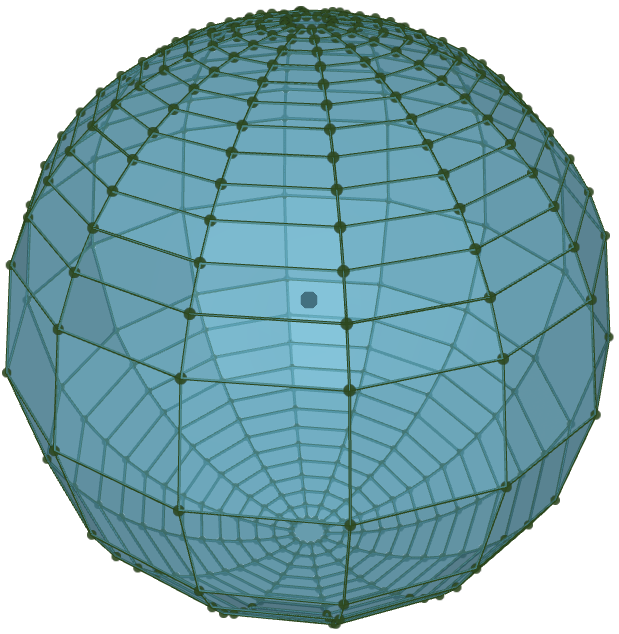}
    \caption{Three-dimensional globe over a regular $14$-gon.}
    \label{fig:3-globe}
\end{figure}

\begin{proposition}\label{prop:fvector_globe}
    Let $P \subset \R^d$ be a polytope containing the origin in its interior.
    The faces of $\globe(n,P)$ are of the following form: 
    \begin{enumerate}[label=\textup{(}\roman*\textup{)}]
        \item (faces at the ``poles'') $P_{n-1} = \sqrt{1 - (\frac{n-1}{n})^2}P \times \{\frac{n-1}{n}\}, \ P_{-n+1} = \sqrt{1 - (\frac{n-1}{n})^2}P \times \{\frac{-n+1}{n}\}$
        \item (copies of proper faces of $P$) $F_i = \sqrt{1 - (\frac{i}{n})^2}F \times \{\frac{i}{n}\}$ for $i \in \Z$, $-n+1 \leq i \leq n-1$, and for all proper faces $F$ of $P$,
        \item (generalized prisms of proper faces of $P$) $F_{i,i+1} = \conv(F_i, F_{i+1})$ for $i \in \Z$, $-n+1 \leq i \leq n-2$, and for all proper faces $F$ of $P$.
    \end{enumerate} 
    
    The $f$-vector of $\globe(n,P) \subset \R^{d+1}$ is given by
    \[ f_k(\globe(n,P)) = \begin{cases}
        (2n-1)f_k(P)  & \text{if } k=0\\
        (2n-1)f_k(P) + (2n-2)f_{k-1}(P) & \text{if }  1 \leq k \leq d-1 \\
        (2n-2)f_{k-1}(P) + 2 & \text{if } k=d \ .
    \end{cases}\]
\end{proposition}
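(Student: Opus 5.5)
The plan is to view $\globe(n,P)$ as a stack of generalized prisms and reduce everything to the structure of one such prism. Write $P_i = \lambda_i P \times \{\frac in\}$ with $\lambda_i = \sqrt{1-(i/n)^2}$ for $-n+1\le i\le n-1$, and $S_i = \conv(P_i,P_{i+1})$ for $-n+1\le i\le n-2$. Each $S_i$ is, after an affine change of the last coordinate, the generalized prism $\prism(P,\lambda_i,\lambda_{i+1})$ of \Cref{prop:generalized-prism-slices}. Hence it is a permissible projective image of $\prism(P)$, and its faces are $F_i$, $F_{i+1}$ and $\conv(F_i,F_{i+1})$ for faces $F$ of $P$. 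The first step is to show that
\[
\globe(n,P)=\bigcup_i S_i ,
\]
with consecutive layers meeting exactly in $P_i$. Every point of the globe has last coordinate in $[\frac{-n+1}{n},\frac{n-1}{n}]$. The key observation is that $t\mapsto\sqrt{1-t^2}$ is strictly concave. Consequently, on each interval $[\frac in,\frac{i+1}n]$ the "radius function" of the convex hull is the linear interpolation of $\lambda_i,\lambda_{i+1}$. In other words, the horizontal section of the globe at height $t$ is $r(t)P$, where $r$ is the piecewise-linear interpolation of the $\lambda_i$. This $r$ is concave, so the union of the layers is convex, and it contains all the generating points. The union is therefore the convex hull.

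Next I determine the facets. A facet is a maximal face of one layer that lies on the boundary of the union. The candidates are the facets of the layers: $P_{\pm(n-1)}$ (the caps), the inner copies $P_i$ with $|i|<n-1$, and the side facets $\conv(G_i,G_{i+1})$ for facets $G$ of $P$. The inner copies $P_i$ are not facets, since they separate two layers and meet the interior. For a side facet $\conv(G_i,G_{i+1})$, I need that it lies in a supporting hyperplane of the whole globe, and that this hyperplane cuts the globe exactly in it. If $G=\{x\in P: \langle a,x\rangle=1\}$, this hyperplane is
\[
\{(x,t): \langle a,x\rangle = \ell_i(t)\},
\]
where $\ell_i$ is the linear interpolation on the $i$-th interval. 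Strict concavity gives $r(t)\le \ell_i(t)$ everywhere, with equality only on $[\frac in,\frac{i+1}n]$. This is the main obstacle: it must be checked carefully, namely that $r\le \ell_i$ with the correct equality set, which again comes down to strict concavity of $\sqrt{1-t^2}$ at the grid points.

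Given that, every face of the globe is a face of some side facet $\conv(G_i,G_{i+1})$ or a face of a cap. Faces of these generalized prisms are exactly the types (i)–(iii): the caps, the copies $F_i$, and the segments-of-prisms $F_{i,i+1}$. The face $F_i$ arises for every $i$, since it is a face of neighboring side facets, and no two of the listed faces coincide. Counting then gives the $f$-vector.

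Vertices come only from copies of vertices of $P$ in the $2n-1$ levels, giving $(2n-1)f_0(P)$. For $1\le k\le d-1$, the $k$-faces are the $(2n-1)f_k(P)$ copies $F_i$ with $\dim F=k$, plus the $(2n-2)f_{k-1}(P)$ prisms $F_{i,i+1}$ with $\dim F=k-1$. No cap counts here, since a cap has dimension $d$. For $k=d$, the $d$-faces are the $(2n-2)f_{d-1}(P)$ side prisms together with the $2$ caps.
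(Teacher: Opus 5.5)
Your proof is correct, but it takes a different route from the paper's.

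\textbf{The paper's route.} The paper never decomposes the globe into layers. It takes an arbitrary functional $(a,\lambda)$ and notes that its maximum on the $i$-th copy is $\lambda_i h_P(a)+\lambda\tfrac{i}{n}$. After dividing by $h_P(a)>0$, it reads this as a strictly concave function $\varphi_a$ evaluated at the grid points $t=\tfrac in$. So every face is supported on a single level or on two consecutive levels, giving $F_i$ or $F_{i,i+1}$ with $F$ the $a$-maximal face of $P$. It then writes down explicit supporting hyperplanes for every $F_{i,i+1}$, with $F$ an arbitrary face of $P$.

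\textbf{Your route.} You first show that $\globe(n,P)=\bigcup_i S_i$ with horizontal sections $r(t)P$. You then identify the facets as those facets of layers whose affine hull supports the union. Finally, you obtain all lower-dimensional faces from the face lattice of the generalized prisms $\conv(G_i,G_{i+1})$.

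\textbf{Your ``main obstacle'' is the paper's key computation.} Choose $\lambda/h_P(a)$ equal to minus the slope of $\ell_i$. Then
\[
\ell_i(\tfrac jn)-\lambda_j=\varphi_a(\tfrac in)-\varphi_a(\tfrac jn).
\]
Hence ``$r\le \ell_i$ with equality exactly on $[\tfrac in,\tfrac{i+1}n]$'' is, on grid points and hence everywhere by piecewise linearity, the statement that $\varphi_a$ is maximized on the grid exactly at $i$ and $i+1$. Both reduce to strictly decreasing secant slopes of $\sqrt{1-t^2}$.

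\textbf{What each route buys.} The paper's argument is shorter. It handles all faces at once, and it needs neither convexity of the union nor the face lattice of generalized prisms. Yours yields additional structure: the slab of the globe between heights $\tfrac in$ and $\tfrac{i+1}n$ is exactly the generalized prism $S_i$. The paper only asserts this informally and later uses it ``by construction'' in \Cref{prop:bound_globeslices}.

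\textbf{A small point in the count.} State explicitly that every $F_{i,i+1}$ with $F$ a proper face of $P$ actually occurs. This holds because $F$ lies in some facet $G$ of $P$, so $F_{i,i+1}$ is a face of the side facet $\conv(G_i,G_{i+1})$.
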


\begin{proof}
  The polytope $\globe(n,P)$ is the convex hull of $2n-1$ copies of $P$, which are embedded on parallel hyperplanes. The scaling of each of the copies ensures that all vertices are in convex position, so that $\globe(n,P)$ can be thought of as a collection of generalized prisms of $P$ on top of each other. 
  More precisely, let $H_u(\beta)$ define a supporting hyperplane of $\globe(n,P)$, and write $u = (a,\lambda)$. On the $i$-th copy of $P$, the maximum of the functional $\langle \cdot,u\rangle$ is
  The copies of $P$ on which the supporting hyperplane $H_u(\beta)$ attains its maximum are exactly those indices $i$ for which the above expression is maximal. If $a\neq 0$, then
$\max_{x\in P}\langle x,a\rangle>0$,
because $0\in\inter(P)$. Therefore, dividing the expression by $\max_{x\in P}\langle x,a\rangle>0$ leaves the set of maximizers invariant.
This modified expression is obtained by evaluating the strictly concave function
$$
\varphi_a : t\longmapsto \sqrt{1-t^2}
+
\frac{\lambda}{\max_{x\in P}\langle x,a\rangle}t
$$
at the points $t=\frac{i}{n}$, where $-n+1\leq i\leq n-1$. Hence, its maximum on the discrete set
$
\left\{\frac{i}{n}\mid -n+1\leq i\leq n-1\right\}
$
is attained either at a single index $i$, or at two consecutive indices $j$ and $j+1$. Thus every exposed face of $\globe(n,P)$ is supported either on one copy of $P$, or on two consecutive copies of $P$, i.e., at faces of the form $F_i$ or $F_{j,j+1}$. The case $a = 0$ yields the faces at the poles.

Next we show that indeed all $F_i,F_{i,i+1}$ arise as a face of $\globe(n,P)$. Let $a \in \R^{d}, \alpha \in \R$, and $F$ be a face of $P$ be such that $F = P \cap H_a(-\alpha) = \{x \in P \mid \langle a, x \rangle = \alpha\}$. Let $i \in \{-n+1, \ldots, n-2\}$ and define $r_i = {\textstyle{\sqrt{1-\lp\frac{i}{n}\rp^2}}}$. We consider $\lambda$ and $\beta$ such that with $u=(a,\lambda)$, the hyperplane $H_u(-\beta)$ is supporting for $\globe(n,P)$ at $F_{i,i+1}$. For this, let $\lambda = n\alpha(r_i-r_{i+1})$ and $\beta = \alpha (r_i + i(r_i - r_{i+1})) = \alpha r_i + i\tfrac{\lambda}{n}$. 
First, we show that $H_u(-\beta)$ is supporting for $\globe(n,P)$.
Write any point in $\globe(n,P)$ as $x_j := (r_jx,\frac{j}{n})$ for $x \in P$ and $j \in \{-n+1, \ldots, n-1\}$. Then since $H_a(-\alpha)$ is supporting for $P$, i.e., $\langle x,a\rangle \leq \alpha$ for all $x \in P$ we get 
\begin{align}\label{eq:xju_value}
    \langle x_j, u \rangle = r_j \langle x, a \rangle + \lambda \frac{j}{n}  = r_j \langle x,a\rangle + j\alpha (r_i - r_{i+1}) \leq \alpha (r_j + j(r_i - r_{i+1}))
\end{align}
Next, we show that $F_i, F_{i+1} \subset H_u(-\beta)$. For $x \in F$ and $j \in \{i, i+1\}$, indeed 
\begin{align*}
    \langle x_j, u \rangle = r_j \alpha + j\alpha (r_i - r_{i+1}) =  \alpha (r_i + i(r_i - r_{i+1})) = \alpha (r_{i+1} + (i+1)(r_i - r_{i+1})) = \langle x_{i+1}, u \rangle \ .
\end{align*} Thus, $F_i,F_{i+1} \subset H_u(-\beta)$, and so $F_{i,i+1}\subset H_u(-\beta)$. To show that $H_u(-\beta)$ is a supporting hyperplane of $\globe(n,P)$ with exposed face $F_{i,i+1}$, recall that $\varphi_a$, defined above, is a strictly concave function. We get $\varphi_a(\tfrac{i}{n})= \varphi_a(\tfrac{i+1}{n})= \tfrac{\beta}{\alpha}$.
By strict concavity, for all $\tfrac{j}{n}<\tfrac{i}{n}$ and for all $\tfrac{j}{n} > \tfrac{i+1}{n}$ it is $\varphi_a(\frac{i}{n}) > \varphi_a(\tfrac{j}{n})$. 
Moreover, since $0 \in \inter(P)$ and $\alpha = \max_{x \in P} \langle x, a \rangle$, we have $\alpha > 0$. 
Thus, for any $x_j \in P$, by \eqref{eq:xju_value} 
\begin{align*}
   \tfrac{\beta}{\alpha} = \varphi_a(\tfrac{i}{n}) \geq \varphi_a(\tfrac{j}{n}) = r_j + \tfrac{\lambda}{\alpha} \cdot \tfrac{j}{n}
    =r_j  + j(r_i - r_{i+1}) 
    \geq \tfrac{1}{\alpha} \langle x_j,u \rangle 
\end{align*}
and the first inequality is strict exactly when $j \not \in \{i,i+1\}$, the second inequality is strict exactly when $x \not \in F$.
This proves that $H_u(-\beta)$ is a supporting hyperplane for $\globe(n,P)$ at $F_{i,i+1}$. Finally, $F_i$ and $F_{i+1}$ are each faces of $F_{i,i+1}$, hence also a face of $\globe(n,P)$.

  By construction, the vertices of $\globe(n,P)$ are exactly the vertices of the individual copies of $P$, so $f_0(\globe(n,P)) = (2n-1)f_0(P)$.
  For each $k \in \left\{1, \ldots, d-1 \right\}$, any $k$-face is either a $k$-face $F_i$ for some $k$-face $F$ of $P$, or the convex hull $F_{j,j+1}$ of two copies of the same $(k-1)$-face $F$ of $P$. This yields $f_k(\globe(n,P)) = (2n-1)f_k(P) + (2n-2)f_{k-1}(P)$.
  The $d$-dimensional faces are of the form $F_{j,j+1}$ for facets $F$ of $P$, and the two copies of $P$ at the poles. Thus $f_d(\globe(n,P)) = (2n-2)f_{d-1}(P)+2$.
\end{proof}

We now show that the globe construction yields a small-slice sequence. For this, we need to understand how many $k$-faces can be intersected simultaneously by a single hyperplane, for every $k \in [d]$. The following technical lemma shows that for $\globe(n,P)$ and a fixed face $F \preceq P$, a hyperplane can only intersect a small number of associated faces $F_{i,i+1}$ without simultaneously intersecting the faces $F_i$ as well.

\begin{lemma}\label{lem:globe-intersections}
    Let \(P \subset \R^d\) be a polytope containing the origin in its interior, let \(F\) be a \(k\)-face of \(P\), $k\geq 0$, and let \(H \subset \R^{d+1}\) be a hyperplane defining the open halfspaces $H^+,H^-$. 
    Then the following statements hold for faces $F_i$ and $F_{i,i+1}$ of $\globe(n,P)$ (as defined in \Cref{prop:fvector_globe}).
    \begin{enumerate}[label=\textup{(}\arabic*\textup{)}]
    \item Let $i<j$. If $F_i,F_j \subset H^+$ and $F_{j,j+1} \cap H \neq \emptyset$, then $F_{l,l+1} \subset H^+$ for all $i \leq l \leq j-1$. \label{item:globe-intersections-step1}
    \item There are at most two indices \(i\) such that
        $\dim(F_{i,i+1} \cap H)=k$ and $F_i \cap H=\emptyset.$\label{item:globe-intersections-punchline}
    \end{enumerate}
\end{lemma}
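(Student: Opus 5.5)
We will work with coordinates adapted to the globe. Write $H=\{(x,s)\in\R^d\times\R \mid \langle a,x\rangle+\lambda s+\beta=0\}$ with $H^+$ the side where this expression is positive, and set $r_l=\sqrt{1-(l/n)^2}>0$. Every point of $F_l$ has the form $(r_l x, l/n)$ with $x\in F$, where its value is
\[
h_x(l)= r_l\langle a,x\rangle+\lambda\tfrac{l}{n}+\beta .
\]
The key observation is that, since $r_l>0$ for all $l$, the minimum of $h_x(l)$ over $x\in F$ is attained at the \emph{same} point for every level $l$, namely a minimizer $y\in F$ of $\langle a,\cdot\rangle$. Likewise a maximizer $z\in F$ of $\langle a,\cdot\rangle$ gives the maximum at every level. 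Hence $F_l\subset H^+$ iff $h_y(l)>0$, and $F_l\subset H^-$ iff $h_z(l)<0$. Since $F_{l,l+1}=\conv(F_l,F_{l+1})$, we get $F_{l,l+1}\subset H^+$ iff $h_y(l),h_y(l+1)>0$, and $F_{l,l+1}\cap H\neq\emptyset$ forces $\min(h_y(l),h_y(l+1))\le 0$. This reduces everything to the single real function $t\mapsto c\sqrt{1-t^2}+\lambda t+\beta$ with $c=\langle a,y\rangle$. It is concave if $c\ge 0$ and convex if $c\le 0$; the case $c=0$ is affine and is covered by either argument.

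For \ref{item:globe-intersections-step1}: we have $h_y(i)>0$ and $h_y(j)>0$. Because $F_j\subset H^+$ and $F_{j,j+1}$ meets $H$, we get $h_y(j+1)\le 0$. We show $h_y(l)>0$ for all $i\le l\le j$, which gives $F_{l,l+1}\subset H^+$ for $i\le l\le j-1$. If $h_y$ is concave, positivity at the endpoints $i,j$ propagates to the whole interval. If $h_y$ is convex, note that $h_y(j)>0\ge h_y(j+1)$, so the slope of the secant on $[j,j+1]$ is negative. By convexity, all secant slopes to the left of $j$ are also negative, so $h_y$ is decreasing on $\{i,\dots,j\}$, and thus $h_y(l)\ge h_y(j)>0$.

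For \ref{item:globe-intersections-punchline}: take an index $i$ with $\dim(F_{i,i+1}\cap H)=k$ and $F_i\cap H=\emptyset$. Then $F_i$ lies strictly on one side of $H$ while $H$ meets $F_{i+1}$.
\begin{itemize}
\item If $F_i\subset H^+$, then $h_y(i)>0\ge h_y(i+1)$; call this a \emph{down-step of $y$}.
\item If $F_i\subset H^-$, then $h_z(i)<0\le h_z(i+1)$; call this an \emph{up-step of $z$}.
\end{itemize}
Suppose $y$ has two down-steps at $i<j$. Then $F_i,F_j\subset H^+$ and $F_{j,j+1}\cap H\neq\emptyset$, so \ref{item:globe-intersections-step1} gives $F_{i,i+1}\subset H^+$. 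This contradicts $h_y(i+1)\le 0$. Applying the mirrored version of \ref{item:globe-intersections-step1} (with $H^-$, $z$, and reversed inequalities, proved identically) shows that $z$ has at most one up-step. Hence there are at most two such indices.

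The only step I expect to need care is \ref{item:globe-intersections-step1} in the convex case, which is handled by the decreasing-secant argument above. Also, the dimension hypothesis in \ref{item:globe-intersections-punchline} is used only to guarantee that $H$ actually meets $F_{i,i+1}$.
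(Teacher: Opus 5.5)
Your proof is correct, and its skeleton matches the paper's. Both arguments fix a minimizer of $\langle a,\cdot\rangle$ on $F$ (the paper takes a vertex $v^*$) and use $r_l>0$ to see that this single point decides the position of every level $F_l$ relative to $H$. Both then deduce (2) from (1).

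The difference lies in the one-dimensional step. The paper notes that the points $v^*_l$ lie on the degree-2 curve $\gamma^{v^*}$ in the plane spanned by $(v^*,0)$ and $e_{d+1}$. It then counts intersections of arcs of this conic with $H$ (at most two points, plus a parity argument on the arcs $\gamma^{v^*}_{i,j}\subset\gamma^{v^*}_{i,j+1}$). You instead observe that $h_y(l)=g(l/n)$ with $g(t)=c\sqrt{1-t^2}+\lambda t+\beta$, which is concave or convex according to the sign of $c$, and you work only with the grid values. This sidesteps the tangency and multiplicity issues implicit in the arc counting. It also shows that each case uses only part of the hypotheses: the concave case needs only $F_i,F_j\subset H^+$, and the convex case needs only $F_j\subset H^+$ together with $F_{j,j+1}\cap H\neq\emptyset$.

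For (2), the paper runs a case analysis on three indices $i_1<i_2<i_3$. You argue directly that at most one qualifying index has $F_i\subset H^+$ (a down-step of $y$) and at most one has $F_i\subset H^-$ (an up-step of $z$). This is a more direct pigeonhole, applying (1) once per side, and gives the same bound.

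One small slip: the sentence ``$H$ meets $F_{i+1}$'' is false in general. $F_{i+1}$ can lie entirely in the open halfspace opposite to $F_i$ while $H$ still cuts $F_{i,i+1}$ in dimension $k$. What you actually use, and what is true, is $F_{i+1}\not\subset H^+$ (resp.\ $F_{i+1}\not\subset H^-$). This follows from $F_i\subset H^+$ and $F_{i,i+1}\not\subset H^+$, and it gives exactly the inequalities $h_y(i+1)\le 0$ and $h_z(i+1)\ge 0$ stated in your bullets.
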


\begin{proof} 
    To prove \labelcref{item:globe-intersections-step1}, let $v$ be a vertex of $F$, and denote by 
    $$\textstyle v_i = \left(\sqrt{1-(\frac{i}{n})^2}\ v, \frac{i}{n}\right)$$ 
    the corresponding vertex of $F_i$.
    By the construction of the globe, all vertices $v_{i}$, $i \in [-n+1,n-1]$ lie on a common curve of degree $2$, namely $\gamma^v :=  \{ (\alpha v, \mu) \mid \alpha,\mu \in \R, \ \alpha^2 + \mu^2 =1\}$. 
     We denote by $\gamma^v_{ij} \subset \gamma^v$ the arc with endpoints $v_i$ and $v_j$.
     We show that for all $v \in \verts(F)$ and $l = i, \ldots j$ we have $v_l \in H^+$. Then, convexity implies that $F_l \subset H^+$ for all $i \leq l \leq j$ and $F_{l,l+1} \subset H^+$ for all $i \leq l \leq j-1$, as desired.
     Let $H = H_{(u,\lambda)}(\beta) = \{x \in \R^{d+1} \mid \langle x, (u,\lambda) \rangle + \beta = 0\}$, and $H^- = \{x \mid \langle x,(u,\lambda)\rangle + \beta < 0\}$. By assumption, $F_{j,{j+1}} \cap H \neq \emptyset$ and $F_j \subset H^+$. Thus, there exists some $w \in \verts(F)$ such that $w_{j+1} \in \overline{H^-}$. Let $v^* \in \verts(F)$ be the vertex minimizing $\langle \cdot,u\rangle$ among all vertices of $F$. Then, in particular, $\langle v^*_{j+1}, (u,\lambda) \rangle + \beta \leq \langle w_{j+1}, (u, \lambda) \rangle + \beta \leq 0$, so $v^*_{j+1} \in \overline{H^-}$. 
     Since $F_i,F_j \subset H^+$, we have that $v^*_i, v^*_j \in H^+$. Since $\gamma^v{^*}$ is degree $2$, and $v_i^*,v_j^* \in H^+$, the arc $\gamma_{i,j}^{v^*}$ intersects $H$ either $0$ or $2$ times. 
    Furthermore, $v_j^* \in H^+$ and $v_{j+1}^* \in \overline{H^-}$ imply that the arc $\gamma_{i,j+1}^{v^*}$ intersects $H$ exactly once. 
     Thus, $\gamma^{v^*}_{i,j} \subset \gamma^{v^*}_{i,j+1}$ implies that $\gamma^{v^*}_{i,j}$ intersects $H$ at most once, and hence not at all.

     This implies that  $\gamma^{v^*}_{ij} \subset H^+$, and hence $v^*_i, v^*_{i+1}, \dots v^*_j \in  H^+$. Equivalently, for all $l = i,\dots,j$, we have $\langle v^*_l,(u,\lambda)\rangle + \beta > 0$.
     Now, recall that $v^* \in \verts(F)$ was chosen so that $\langle v^*,u\rangle \leq \langle v,u \rangle$ for all $v \in \verts(F)$. Thus,
    for each $l=i,\dots,j$ and $v \in \verts(F)$ we have that 
    $$0 \leq \langle v^*_l, (u,\lambda) \rangle + \beta
    = \sqrt{1 - (\tfrac{l}{n})^2} \ \langle v^*,u \rangle + \lambda \tfrac{l}{n} + \beta
    \leq \sqrt{1 - (\tfrac{l}{n})^2} \ \langle v,u \rangle + \lambda \tfrac{l}{n} + \beta = \langle v_l,(u,\lambda)\rangle + \beta
    $$
    and hence $v_l \in H^+$ and $F_{l,l+1} \subset H^+$.

    To show \labelcref{item:globe-intersections-punchline}, assume for contradiction there are three indices $i_1 < i_2 < i_3$ with $\dim(F_{i_j,i_j+1} \cap H)=k$ 
    and $F_{i_j} \cap H = \emptyset $ for all 
    $j \in [3]$.
    Note that for each $j=1,2,3$, $F_{i_j} \cap H = \emptyset$ implies that $F_{i_j}$ is contained in one of the open halfspaces $H^+,H^-$. First, suppose $F_{i_1},F_{i_3}$ lie in the same open halfspace, without loss of generality in $H^+$. Then \labelcref{item:globe-intersections-step1} implies that $F_{i_2,i_2+1} \subset H^+$, contradicting that $\dim(F_{i_2,i_2+1} \cap H) = k$.
    Thus, $F_{i_1}, F_{i_3}$ are contained in distinct halfspaces, so we have $F_{i_1} \subset H^-$ and $F_{i_3}\subset H^+$ (up to reorientation of $H$). 
    If $F_{i_2} \subset H^+$, then applying \labelcref{item:globe-intersections-step1} to $F_{i_2}$ and $F_{i_3}$ gives
    $F_{i_2,i_2+1}\subset H^+$, contradicting
    $\dim(F_{i_2,i_2+1}\cap H)=k$. Hence $F_{i_2}\subset H^-$.
    We therefore have that $F_{i_1},F_{i_2} \subset H^-$. In this case, \labelcref{item:globe-intersections-step1} (after replacing $H^+$ with $H^-$ in the statement) implies that $F_{i_1,i_1+1} \subset H^-$, yielding a contradiction. Thus, the existence of $i_1,i_2,i_3$ implies that $F_{i_1},F_{i_3}$ can neither lie in the same halfspace, nor in different ones. Thus, there exist at most two such indices.
\end{proof}

We continue by proving another technical lemma, where we give an upper bound for the number of $k$-dimensional faces of a polytope $P$ that have a nonempty intersection with a hyperplane $H$. We will use this lemma as well as the prior one in the following proposition to give an upper bound for the f-vector of a slice of $\globe(n,P)$.

\begin{lemma}\label{lem:bound_globeslices-hilfslemma}
    Let $P \subset \R^d$ be a polytope, let $H \subset \R^d$ be an affine hyperplane, and let $0 \leq k \leq d$. Then
    \[
        \#\{F \in \mathcal L(P) \mid \dim(F)=k,\ F \cap H \neq \emptyset\}
        \leq
        3 \max_{\substack{j \in [0,d-1] \\ H' \subset \R^d}} f_j(P \cap H'),
    \]
    where the maximum is taken over all affine hyperplanes $H' \subset \R^d$ such that $ P \cap H' \neq \emptyset$.
\end{lemma}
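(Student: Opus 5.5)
We split the $k$-faces of $P$ meeting $H$ according to the partition of \Cref{lem:upward-downward-closure}: faces in $\Fcal^\subset$, in $\Fcal^\circ$, and in $\Fcal^\partial$. Each class will be bounded by $M := \max_{j,H'} f_j(P\cap H')$, which gives the factor $3$. Throughout we assume $P\cap H\neq\emptyset$; otherwise the left-hand side is $0$.

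\emph{Faces in $\Fcal^\circ$.} By \Cref{lem:types-of-faces-in-slices}, each $k$-face $F\in\Fcal^\circ$ yields the $(k-1)$-face $F\cap H$ of $P\cap H$. This assignment is injective, since the canonical embedding $\varphi_H$ recovers $F$ from $F\cap H$. Hence the count is at most $f_{k-1}(P\cap H)\le M$. For $k=0$ the set $\Fcal^\circ$ contains no vertices, so the count is $0$.

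\emph{Faces in $\Fcal^\subset$.} Such a face is itself a $k$-face of $P\cap H$. Again by \Cref{lem:types-of-faces-in-slices}, distinct faces give distinct faces of the slice, so the count is at most $f_k(P\cap H)$. If $k\le d-1$ this is at most $M$. For $k=d$ the face would be $P$ itself, which cannot lie in $H$, so the count is $0$.

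\emph{Faces in $\Fcal^\partial$.} This is the main obstacle. Such an $F$ meets $H$ only in its boundary, so $H$ supports $F$ and $F\cap H=G$ is a proper nonempty face of $F$ lying in $\Fcal^\subset$. To handle this, translate $H$ slightly to a parallel hyperplane $H'$, choosing the side on which $F$ lies. Then $H'$ passes through the relative interior of $F$, and any face $F'$ meeting $H$ only in its boundary from the same side is also properly cut, giving a $(k-1)$-face $F\cap H'$ of $P\cap H'$.

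The difficulty is that faces in $\Fcal^\partial$ may lie on both sides of $H$. To deal with this, use two translates $H'_+$ and $H'_-$, one on each side. Both move by an $\varepsilon$ smaller than the distance from $H$ to any vertex of $P$ not on $H$. Then every face in $\Fcal^\partial$ lying in $\overline{H^+}$ is properly cut by $H'_+$, and injectivity follows as before.

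This gives a bound of $2M$ for $\Fcal^\partial$ and a total of $4M$. To reach $3M$, we instead merge the $\Fcal^\circ$ and $\Fcal^\partial$ counts. Every $k$-face in $\Fcal^\circ$, together with every $k$-face of $\Fcal^\partial$ on the $+$ side, is properly cut by $H'_+$ for $\varepsilon$ small. Indeed, a face in $\Fcal^\circ$ has points strictly on both sides of $H$, so it still meets $H'_+$ in its relative interior. Symmetrically, $H'_-$ properly cuts every face of $\Fcal^\partial$ on the $-$ side.

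Putting this together:
\[
\#\{F : \dim F=k,\ F\cap H\neq\emptyset\} \;\le\; f_k(P\cap H) + f_{k-1}(P\cap H'_+) + f_{k-1}(P\cap H'_-) \;\le\; 3M.
\]
The boundary cases $k=0$ and $k=d$ are handled as in the first two steps, where the corresponding terms vanish.

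It remains to check that $P\cap H'_\pm$ is nonempty whenever it is needed. This holds because such a term is nonzero only when some face lies on that side.
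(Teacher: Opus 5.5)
Your proposal is correct and matches the paper's proof. Both bound the $k$-faces contained in $H$ by $f_k(P\cap H)$, and the $k$-faces meeting but not contained in $H$ by $f_{k-1}(P\cap H(\varepsilon))+f_{k-1}(P\cap H(-\varepsilon))$, using two parallel translates of $H$ and the fact that each such face is properly cut by at least one translate. Your intermediate $4M$ bound and the explicit assignment of $\Fcal^\circ$-faces to $H'_+$ are not needed for this union bound, but they do no harm.
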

\begin{proof}
First, all distinct $k$-faces of $P$ contained in $H$ are distinct $k$-faces of $P \cap H$ and so
\begin{align*}
    \#\{F \in \Lcal(P) \mid \dim(F) =k, \ F \subset H \} \leq f_k(P \cap H).
\end{align*}
If $k=0$, then every $0$-face $F$ with $F\cap H\neq\emptyset$ satisfies $F\subset H$, so the claim follows from the previous inequality. We may therefore assume that \(k\geq1\).

    Next, we bound the number of $k$-faces of $P$ intersected by $H$ but not contained in $H$.
    Since $H$ is an affine hyperplane, we have $
        H = \{x \in \R^d \mid \langle u,x\rangle + \beta = 0\}
    $ for some $u \in \R^d, \beta \in \R$.
    For $\varepsilon \in \R$, define
    \[
        H(\varepsilon) = \{x \in \R^d \mid \langle u,x\rangle + \beta = \varepsilon\}.
    \]
    Then $H(\varepsilon)$ and $H(-\varepsilon)$ are parallel to $H$ and contained in opposite halfspaces defined by $H$. Since there are only finitely many faces of $P$, we can choose $\varepsilon$ sufficiently small such that every $k$-face $F$ with $F \cap H \neq \emptyset$ and $F \not\subset H$ is properly intersected by at least one of the hyperplanes $H(\varepsilon)$ and $H(-\varepsilon)$. Distinct $k$-faces properly intersected by either hyperplane define distinct $(k-1)$-faces of the corresponding slice. Thus,
    \[
    \begin{aligned}
        \#\{F \in \mathcal L(P) \mid \dim(F)=k,\ F \cap H \neq \emptyset\}
        &\leq f_k(P \cap H) + f_{k-1}(P \cap H(\varepsilon)) + f_{k-1}(P \cap H(-\varepsilon)) \\
        &\leq 3 \max_{\substack{j \in [0,d-1] \\ H' \subset \R^d}} f_j(P \cap H') \ .
    \end{aligned} \qedhere
    \]
\end{proof}

In the following statement, we bound the number of $k$-dimensional faces of an affine slice of a globe in terms of the base polytope. This is the main ingredient in showing that the globe construction yields a small-slice sequence.

\begin{proposition}
    For a $d$-dimensional polytope $P \subset \R^d$ containing the origin in its interior, an integer $0 \leq k \leq d-1$ and $n \in \N$, we have 
    \[ f_k(\globe(n,P) \cap H) \leq 2f_k(P) + 6(2n-1) \cdot \max_{j,H'}f_j(P\cap H')\ , \]
    where the maximum is taken over all $j \in [0,d-1]$ and all affine hyperplanes $H'$.
    \label{prop:bound_globeslices}
\end{proposition}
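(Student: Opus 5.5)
The plan is to count the $k$-faces of the slice $\globe(n,P)\cap H$ through the canonical embedding. By \Cref{lem:types-of-faces-in-slices}, every $k$-face of the slice is either a $k$-face of $\globe(n,P)$ contained in $H$, or $G\cap H$ for a $(k+1)$-face $G$ properly intersected by $H$. Moreover, distinct $k$-faces of the slice correspond to distinct such faces $G$. By \Cref{prop:fvector_globe}, the candidate faces are of three types: the two pole faces $P_{\pm(n-1)}$, the copies $F_i$, and the generalized prisms $F_{i,i+1}$. So it suffices to bound, for each type, how many faces of that type of the appropriate dimension can meet $H$.

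\textbf{Pole faces.} These are $d$-dimensional. They contribute only when $k=d-1$ (a pole face properly cut by $H$) or $k=d$ (excluded). If $H$ is one of the pole hyperplanes, the whole slice is a copy of $P$; that case is covered by the $2f_k(P)$ term, or handled separately. I would account for the two poles by the term $2f_k(P)$. Indeed, the slice of a pole face is a $(d-1)$-face of the slice and $1\le f_{d-1}(P)$; the cleaner way is to note that when $H$ equals a horizontal hyperplane $x_{d+1}=i/n$ the slice is $r_iP$ with $f_k(P)$ faces, and a horizontal hyperplane strictly between two levels gives a slab/generalized-prism slice. I will treat horizontal $H$ separately via \Cref{prop:generalized-prism-slices}, and for non-horizontal $H$ let the poles add at most $2$.

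\textbf{Copies $F_i$.} For each level $i$, the faces $F_i$ with $F_i\cap H\neq\emptyset$ are in bijection with faces $F$ of $P$ meeting the affine hyperplane $H_i:=\{x: (r_ix,i/n)\in H\}$ of $\R^d$, provided $H$ is not horizontal. By \Cref{lem:bound_globeslices-hilfslemma} applied to $P$ and $H_i$, the number of $k$- or $(k+1)$-dimensional such $F$ is at most $3\max_{j,H'}f_j(P\cap H')$. Summing over the $2n-1$ levels gives $3(2n-1)\max f_j(P\cap H')$.

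\textbf{Generalized prisms $F_{i,i+1}$.} A $k$-face of the slice coming from a prism $F_{i,i+1}$ has either $\dim F = k$ with $F_{i,i+1}\subset H$ (then $F_i\subset H$, already counted) or $\dim F = k$ with $F_{i,i+1}$ properly cut, i.e.\ $\dim(F_{i,i+1}\cap H)=k$. If $F_i\cap H\neq\emptyset$, charge this face to $F_i$. Each $F_i$ receives at most one charge from $F_{i,i+1}$, so these are bounded by the previous count again, giving another $3(2n-1)\max$. If $F_i\cap H=\emptyset$, \Cref{lem:globe-intersections}\labelcref{item:globe-intersections-punchline} says that for fixed $F$ there are at most two such indices $i$. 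These uncharged faces number at most $2\#\{F: F_{i,i+1}\cap H\ne\emptyset\text{ for some }i\}$.

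The main obstacle is this last uncharged term, which must be bounded by $2f_k(P)$ rather than something larger. The naive bound is $2f_{k}(P)$ since $F$ ranges over $k$-faces of $P$, and that matches the statement exactly. Combining gives $f_k(\globe(n,P)\cap H)\le 2f_k(P)+6(2n-1)\max_{j,H'}f_j(P\cap H')$. I expect care is needed with the boundary indices $i=\pm(n-1)$ and with absorbing the pole contributions and the horizontal-$H$ case into the stated constants. For horizontal $H$, the slice is either $r_iP$ or a slab of $P$ (via \Cref{prop:generalized-prism-slices}), hence bounded by $f_k(P)+3\max$, which fits.
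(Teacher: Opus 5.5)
There is a genuine gap. Starting from the bijection of \Cref{lem:types-of-faces-in-slices} forces you to count the $k$-faces of $\globe(n,P)$ that lie inside $H$, and you mishandle these.

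A prism face $F_{i,i+1}$ has dimension $\dim F+1$. So a prism contained in $H$ is a $k$-face of the slice only when $\dim F=k-1$; your case ``$\dim F=k$, $F_{i,i+1}\subset H$'' gives a $(k+1)$-face of the slice. The associated $F_i$ is then a $(k-1)$-face. Your copies step counts only $k$- and $(k+1)$-dimensional copies meeting $H$, so it never counts this $F_i$. Hence ``already counted'' is false, and a whole class of faces is missing from your tally.

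This class is not negligible. Take $d=2$, $k=1$, $P$ a regular $2m$-gon, and $H$ the plane spanned by $e_3$ and an antipodal pair of vertices $\pm v$. The slice has $4n-2$ edges. All but two are meridian edges $v_iv_{i+1}$, $(-v)_i(-v)_{i+1}$ lying in $H$. Your accounting sees only the two edges coming from the poles.

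Separately, the constants do not close as written. Set $M:=\max_{j,H'}f_j(P\cap H')$. \Cref{lem:bound_globeslices-hilfslemma} bounds faces of one fixed dimension, so using it for both $k$- and $(k+1)$-dimensional copies costs $6M$ per level, not $3M$. Also, the $+2$ you allot to the poles disappears from the final sum.

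The paper avoids faces inside $H$ altogether by using the injection from the proof of \Cref{lemma:fkmin1_of_slice_leq_fk_of_poly}. Each $k$-face of the slice, including one lying in $H$, is assigned injectively to a $(k+1)$-face $G$ of $\globe(n,P)$ with $G\not\subset H$ and $\dim(G\cap H)=k$. Only two types of $G$ then occur: $F_{i,i+1}$ with $\dim F=k$, and $F_i$ with $\dim F=k+1$ (the poles are $F=P$ at $i=\pm(n-1)$). For the first type, your charging argument via \Cref{lem:globe-intersections} is exactly the paper's and yields $2f_k(P)+3(2n-1)M$. The second type costs $3(2n-1)M$ by \Cref{lem:bound_globeslices-hilfslemma} applied in dimension $k+1$ at each level.

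Alternatively, you can keep your decomposition if you tighten two per-level bounds:
\begin{itemize}
\item At each level $i$, bound the $k$-dimensional copies in $H$ together with the $(k+1)$-dimensional copies cut by $H$ by $M$. By \Cref{lem:types-of-faces-in-slices} applied to $P$ and $H_i$, they give distinct $k$-dimensional faces of $P\cap H_i$.
\item Bound the missing prisms $F_{i,i+1}\subset H$ by $f_{k-1}(P\cap H_i)\le M$, charging each to the $(k-1)$-face $F\subset H_i$.
\end{itemize}
Together with your prism charging, this gives $2f_k(P)+5(2n-1)M$, which implies the statement.
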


\begin{proof}
We first cover the case where $H$ is horizontal, i.e., $H = \{x \in \R^{d+1} \mid x_{d+1} = c\}$ for some $c \in \R$. If $c < \frac{-n+1}{n}$ or $c > \frac{n-1}{n}$, then $\globe(n,P) \cap H = \emptyset$, and the claim holds trivially. Otherwise, there exists some $i \in \{-n+1,\dots,n-2\}$ such that $\tfrac{i}{n} \leq c \leq \frac{i+1}{n}$, so $\globe(n,P) \cap H$ is a slice of the slab $\globe(n,P) \cap \{x \mid \tfrac{i}{n} \leq x_{d+1} \leq \frac{i+1}{n} \}$. By construction of the globe, this slab is a generalized prism over $P$, and hence $\globe(n,P) \cap H$ is combinatorially equivalent to $P$ (see \Cref{prop:generalized-prism-slices} and the proof of \Cref{prop:prism-slices}), and we have $f_k(\globe(n,P) \cap H) = f_k(P)$. Thus, the claim holds for such horizontal hyperplanes $H$.

Let $H$ be a hyperplane which is not horizontal. Recall (by the proof of \Cref{lemma:fkmin1_of_slice_leq_fk_of_poly}) that every $k$-face of $\globe(n,P) \cap H$ can be expressed as the intersection of $H$ with a $(k+1)$-face of $\globe(n,P)$, and that such an assignment from $k$- to $(k+1)$-faces can be made injectively. 
    Thus,
    \begin{align*}
        f_k(\globe(n,P) \cap H) \leq \# \{F \in \mathcal{L}(\globe(n,P)) \mid \dim F = k+1, \ \dim(F \cap H) = k\}
    \end{align*}
    By \Cref{prop:fvector_globe}, if $k \leq d-2$, then there are exactly two types of $(k+1)$-faces of $\globe(n,P)$, namely, the faces of the form $F_{i,i+1}$ (where $F$ is a face of dimension $k$) and those of the form $F_{i}$ (where $F$ is a face of dimension $k+1$). Note that for $i = n-1$, $F_{n-1,n}$ is not defined. If $k = d-1$, then there are again two types of $(k+1)$-faces of $\globe(n,P)$, namely, the faces of the form $F_{i,i+1}$ (where $F$ is a face of dimension $d-1$) and the two poles $P_{n-1},P_{-n+1}$. Note that for $P$ itself, $P_{i}$ for $-n+1 < i < n-1$ are not defined.
    Thus, in either case, with $I = \{-n+1,\dots,n-1\}$ and $J = I \setminus \{n-1\}$ the last expression from above equals
    \begin{align*}
        = \sum_{\substack{F \in \mathcal{L}(P), \\ \dim F = k}} \# \{ i \in J \mid \dim(F_{i,i+1} \cap H) = k \} + \sum_{\substack{F \in \mathcal{L}(P), \\ \dim F = k+1}} \# \{ i \in I \mid \dim(F_i \cap H) = k \} 
    \end{align*}
    where for $k = d-1$ we restrict the sums to those indices where the faces are defined.

    We focus on the first sum.
    By \Cref{lem:globe-intersections} \labelcref{item:globe-intersections-punchline}, for each $k$-dimensional face $F$ of $P$, at most two indices $i \in J$ with $\dim(F_{i,i+1} \cap H) = k$ satisfy $F_i \cap H = \emptyset$. Therefore, we obtain the following comparison to the last equation.

    \begin{align*}
    &\leq \sum_{\substack{F \in \mathcal{L}(P), \\ \dim F = k}} \lp \# \{i \in J \mid F_i \cap H \neq \emptyset \} + 2 \rp + \sum_{\substack{F \in \mathcal{L}(P), \\ \dim F = k+1}} \# \{ i \in I \mid \dim(F_i \cap H) = k \}\\
    &\leq \sum_{\substack{F \in \mathcal{L}(P), \\ \dim F = k}} \lp \# \{i \in I \mid F_i \cap H \neq \emptyset \} + 2 \rp + \sum_{\substack{F \in \mathcal{L}(P), \\ \dim F = k+1}} \# \{ i \in I \mid \dim(F_i \cap H) = k \}\\
    &= 2f_k(P) + \sum_{\substack{F \in \mathcal{L}(P), \\ \dim F = k}} \# \{ i \in I \mid F_i \cap H \neq \emptyset \} + \sum_{\substack{F \in \mathcal{L}(P), \\ \dim F = k+1}} \# \{ i \in I \mid \dim(F_i \cap H) = k \}
    \end{align*}
    Changing the order of summation yields
    \begin{equation}
    \begin{aligned}\label{eq:long-proof}
    &= 2f_k(P) + \sum_{i \in I} \# \{F_i \in \mathcal L(\globe(n,P)) \mid \dim(F_i) = k, F_i \cap H \neq \emptyset \} 
    \\ 
    & \hspace{4.3em} 
    + \sum_{i \in I} \# \{F_i \in \mathcal L(\globe(n,P)) \mid \dim(F_i) = k+1, \dim(F_i \cap H) = k \} \ .
    \end{aligned}
    \end{equation}

    We proceed by bounding each of the sums separately. For $i \in I$, define 
    \[
        S_i = \globe(n,P) \cap \{x \in \R^{d+1} \mid x_{d+1} = \frac{i}{n} \}
    \]
    Then $S_i$ is affinely isomorphic to $P$ via the affine map $T_i(x) = ( \sqrt{1 - (\tfrac{i}{n})^2} x , \tfrac{i}{n})$. Note that for any $F \in \mathcal L(P)$ we have $F_i \preceq S_i$. If $H \subset \R^{d+1}$ has nonempty intersection with $F_i$, then it has nonempty intersection with $S_i$. Since $H$ is not horizontal, we have that $H$ does not contain $S_i$, so we can define $H_i = T_i^{-1}(H \cap \{x \mid x_{d+1} = \tfrac{i}{n}\})$ to be the corresponding hyperplane in $\R^d$. By construction, we have that $F_i \cap H \neq \emptyset$ if and only if $F \cap H_i \neq \emptyset$, and $\dim(F_i \cap H) = \dim(F \cap H_i)$. Thus, for fixed $i \in I$, and $k \in \{0,\dots,d-1\}$, we have
    \begin{align*}
        \# \{F_i \in \mathcal L(\globe(n,P)) \mid \dim(F_i) = k, F_i \cap H \neq \emptyset \} &= \# \{F \in \mathcal L(P) \mid \dim(F) = k, F \cap H_i \neq \emptyset \} \ .
    \end{align*}
    By \Cref{lem:bound_globeslices-hilfslemma}, this is bounded from above by $3 \max_{j,H'} f_j(P \cap H')$, which bounds the first sum of \eqref{eq:long-proof}. 
    In the second sum, for fixed $i \in I$ and $k \in \{0,\dots,d-1\}$, we have
    \begin{align*}
     &\# \{F_i \in  \mathcal L(\globe(n,P)) \mid \dim(F_i) = k+1, \dim(F_i \cap H) = k \} \\
    &\leq \# \{F \in \mathcal L(P) \mid \dim(F) = k+1, \dim(F \cap H_i) =k \} \\
    &\leq \# \{F \in \mathcal L(P) \mid \dim(F) = k+1, F \cap H_i \neq \emptyset \} \ .
    \end{align*}
    Again, \Cref{lem:bound_globeslices-hilfslemma} yields an upper bound by $3 \max_{j,H'} f_j(P \cap H')$. 
    Altogether, we thus obtain
    \begin{align*}
        f_k(\globe(n,P) \cap H) 
        &\leq
        2f_k(P) + \sum_{i \in I} \# \{F_i \in \mathcal L \mid \dim(F_i) = k, F_i \cap H \neq \emptyset \} 
    \\ 
    & \hspace{4.3em} 
    + \sum_{i \in I} \# \{F_i \in \mathcal L \mid \dim(F_i) = k+1, \dim(F_i \cap H) = k \} \\
    &\leq 2 f_k(P) + |I| \Big( 3 \max_{\substack{j \in [0,d-1] \\ H' \subset \R^d}} f_j(P \cap H') \Big) + |I| \Big( 3 \max_{\substack{j \in [0,d-1] \\ H' \subset \R^d}} f_j(P \cap H') \Big)  \\
    &\leq 2 f_k(P) + 6(2n-1) \max_{\substack{j \in [0,d-1] \\ H' \subset \R^d}} f_j(P \cap H') \ . \qedhere
    \end{align*}
\end{proof}

We are now ready to show that the globe over a small-slice sequence in dimension $d$ yields a small-slice sequence in dimension $d+1$.

\begin{proposition}
    If $(P_n)_{n \in \N}$ is a small-slice sequence of $d$-dimensional polytopes containing the origin in their relative interiors, then $(\globe(n,P_n))_{n \in \N}$ is a small-slice sequence of $(d+1)$-dimensional polytopes.
    \label{prop:smallslice_induction}
\end{proposition}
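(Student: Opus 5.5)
We verify the two defining conditions of a small-slice sequence directly for $Q_n := \globe(n,P_n) \subset \R^{d+1}$. Here "dimension" shifts from $d$ to $d+1$: we must show $f_k(Q_n) \in \Theta(n^{d})$ for $0 \le k \le d$, and $\max_H f_k(Q_n \cap H) \in \Ocal(n^{d-1})$ for $0 \le k \le d-1$. By hypothesis, $f_k(P_n) \in \Theta(n^{d-1})$ for $0 \le k \le d-1$, and $M_n := \max_{j \in [0,d-2],\, H'} f_j(P_n \cap H') \in \Ocal(n^{d-2})$. We also note that $\globe(n,P_n)$ contains the origin in its interior, as needed for later iterations. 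This holds because the copy of $P_n$ at height $0$ contains the origin in its relative interior, and there are copies above and below it.

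For the face numbers, we use the $f$-vector formula of \Cref{prop:fvector_globe}. For $k=0$, we get $f_0(Q_n) = (2n-1)f_0(P_n) \in \Theta(n)\cdot\Theta(n^{d-1}) = \Theta(n^d)$. For $1 \le k \le d-1$, we get $f_k(Q_n) = (2n-1)f_k(P_n) + (2n-2)f_{k-1}(P_n)$. This is a sum of two nonnegative $\Theta(n^d)$ terms, so it lies in $\Theta(n^d)$. For $k=d$, we get $f_d(Q_n) = (2n-2)f_{d-1}(P_n)+2 \in \Theta(n^d)$.

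For the slices, we apply \Cref{prop:bound_globeslices} with $P = P_n$ to get
\[ f_k(Q_n \cap H) \le 2f_k(P_n) + 6(2n-1)\max_{j,H'} f_j(P_n\cap H'). \]
The main obstacle is a mismatch of index ranges. In \Cref{prop:bound_globeslices}, the maximum runs over $j \in [0,d-1]$, whereas the small-slice hypothesis only controls $j \le d-2$. Moreover, the term $2f_k(P_n)$ is of order $n^{d-1}$, which is acceptable, but only because $k \le d-1$ keeps it in $\Ocal(n^{d-1})$.

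To resolve the mismatch, observe that $j = d-1$ contributes nothing. A slice $P_n \cap H'$ has dimension at most $d-1$, so $f_{d-1}(P_n\cap H') \le 1$. Hence $\max_{j\in[0,d-1],H'} f_j(P_n\cap H') \le \max(M_n,1) \in \Ocal(n^{d-2})$. It follows that $f_k(Q_n\cap H) \le 2f_k(P_n) + 6(2n-1)\Ocal(n^{d-2}) \in \Ocal(n^{d-1})$ uniformly in $H$, which is exactly condition (ii) for dimension $d+1$.

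Two points need care. First, the constants in the $\Theta$ and $\Ocal$ bounds must hold for all sufficiently large $n$, and they do, since $P_n$ is indexed by the same $n$ as the globe parameter. Second, \Cref{prop:bound_globeslices} requires $P_n$ to be full-dimensional with the origin in its interior, which is part of the hypothesis.
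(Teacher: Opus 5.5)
Your proof is correct and follows the paper's argument: condition (i) comes from the $f$-vector formula in \Cref{prop:fvector_globe}, and condition (ii) comes from the slice bound in \Cref{prop:bound_globeslices}. Two of your additions are valid and make the argument slightly more complete than the paper's. First, $f_{d-1}(P_n\cap H')\le 1$, which closes the $j=d-1$ index gap in that bound; the paper passes over this silently. Second, $\globe(n,P_n)$ again contains the origin in its interior, which is needed to iterate in \Cref{cor:small-slice-iterated-globes}.
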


\begin{proof}
    By assumption, $f_k(P_n) \in \Theta(n^{d-1})$ for all $k=0,\dots,d-1$. 
    From the description of the $f$-vector of $\globe(n,P_n)$ in  \Cref{prop:fvector_globe}, we obtain $f_k(\globe(n,P_n)) \in \Theta(n^d)$.
    Moreover, as $(P_n)_{n \in \N}$ is a small-slice sequence, we also have $\max_{k,H}f_k(P_n \cap H) \in \mathcal{O}(n^{d-2})$.  Thus, \Cref{prop:bound_globeslices} implies that 
    \[ f_k(\globe(n,P_n) \cap H) \leq 2f_k(P_n) + 6(2n-1) \max_{k,H}f_k(P_n\cap H) \in \mathcal{O}(n^{d-1}),  \]
    so $(\globe(n,P_n))_{n \in \N}$ is a small-slice sequence.
\end{proof}

In light of the previous \namecref{prop:smallslice_induction}, we are left with finding a small-slice sequence in low dimension. Repeatedly applying the $\globe$ construction to each element of the sequence we obtain a small-slice sequence in general dimension. Any sequence of regular polygons with $m \in \N$ vertices suffices as such a sequence. Recall from \Cref{def:globes} that a $d$-dimensional globe $G^{d}(m)$ is the $(d-2)$-fold globe over a regular $m$-gon.

\begin{corollary}\label{cor:small-slice-iterated-globes}
    For any $d \geq 2$,  $(G^{d}(m))_{m \in \N}$
    is a small-slice sequence of $d$-dimensional polytopes.
\end{corollary}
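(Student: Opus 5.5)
The plan is induction on $d$, starting from the base case $d=2$ and then applying \Cref{prop:smallslice_induction} at each step. There is a bookkeeping issue here. \Cref{prop:smallslice_induction} turns a small-slice sequence $(P_n)_{n}$ into $(\globe(n,P_n))_n$, where the globe parameter matches the sequence index. However, $G^d(m)$ is defined as the $(d-2)$-fold globe with parameter $m$ over the regular $m$-gon. Setting $P_m := G^{d}(m)$ gives $G^{d+1}(m) = \globe(m, G^d(m)) = \globe(m, P_m)$, which is exactly the form the proposition requires. So the induction step is immediate, provided $G^d(m)$ contains the origin in its interior. This holds because the $m$-gon is centered at the origin, and $\globe(m,P)$ contains the copy $P\times\{0\}$ (the index $k=0$) together with points above and below it, so the origin is interior.

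For the base case $d=2$, take $Q_m$ to be the regular $m$-gon, with $f_0(Q_m)=f_1(Q_m)=m\in\Theta(m^{1})$, matching $\Theta(m^{d-1})$ for $d=2$. For the slice condition we need $\max_H f_0(Q_m\cap H)\in\Ocal(m^0)=\Ocal(1)$. Any affine line meets a convex polygon in a segment, a point, or the empty set, so $f_0(Q_m\cap H)\le 2$ uniformly. Hence $(Q_m)$ is a small-slice sequence. Since $G^2(m)$ is the $0$-fold globe, it equals $Q_m$.

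The induction then gives the claim for all $d\ge 2$. The only point I would double-check is the compatibility of \Cref{prop:smallslice_induction} with the indexing, namely that its proof really uses the same $n$ for the globe parameter and the sequence index. The proof shows $f_k(\globe(n,P_n))\in\Theta(n^d)$ from \Cref{prop:fvector_globe} and the slice bound $\Ocal(n\cdot n^{d-2})$ from \Cref{prop:bound_globeslices}, which is precisely our situation. I expect no real obstacle beyond this verification and the trivial base case.
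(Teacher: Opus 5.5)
Your proof is correct and takes the same approach as the paper: the regular $m$-gons form a small-slice sequence in dimension $2$, and \Cref{prop:smallslice_induction} applied with $P_m := G^d(m)$ gives the induction step. You also check explicitly that $G^{d+1}(m) = \globe(m, G^d(m))$ matches the indexing in that proposition and that each $G^d(m)$ contains the origin in its interior; the paper leaves both points implicit.
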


\begin{proof}
    Let $(P_m)_{m \in \N}$ be a sequence of regular $m$-gons in $\R^2$.
    We observe that $(P_m)_{m \in \N}$ is a small-slice sequence, as $f_0(P_m) = f_1(P_m) = m \in \Theta(m^{2-1})$. For any hyperplane $H$, $P_m \cap H$ has at most two vertices and one edge, so $f_0(P_m \cap H), f_1(P_m \cap H) \in \mathcal{O}(1) = \mathcal{O}(m^{2-2})$. By \Cref{prop:smallslice_induction}, globes over small-slice sequences are small-slice sequences, which implies the statement.
\end{proof}

\subsection{Big-slice sequences and bipyramids}

Having constructed a $d$-dimensional small-slice sequence in \Cref{sec:small-slice}, it is now time to construct a big-slice sequence. We begin by showing that, given a big-slice sequence in dimension $d$, taking bipyramids yields a big-slice sequence in dimension $d+1$.

\begin{proposition}\label{prop:big-slice-pyramids}
    Let $(P_n)_{n \in \N}$ be a big-slice sequence of $d$-dimensional polytopes, containing the origin in their relative interiors. Then $(\bipyr(P_n))_{n \in \N}$ is a big-slice sequence of $(d+1)$-dimensional polytopes.
\end{proposition}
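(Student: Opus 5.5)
We verify the two defining conditions of a big-slice sequence for $(\bipyr(P_n))_{n\in\N}$ in dimension $d+1$. Throughout, the $f$-vector formula for bipyramids from \Cref{sec:constructions} and \Cref{prop:slices_bip} do the work.

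\emph{Step 1: face numbers.} For $0\le k\le d-1$ we have $f_k(\bipyr(P_n)) = f_k(P_n)+2f_{k-1}(P_n)$, and for $k=d$ we have $f_d(\bipyr(P_n))=2f_{d-1}(P_n)$. For $1\le k\le d$ every term on the right lies in $\Theta(n)$ by assumption, so the sum lies in $\Theta(n)$. The case $k=0$ needs a separate look: $f_0(\bipyr(P_n)) = f_0(P_n)+2f_{-1}(P_n) = f_0(P_n)+2$, which is again in $\Theta(n)$. This gives condition (i) for all $0\le k\le d$.

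\emph{Step 2: slices.} We must show $\min_{u\in\Sph^{d}} f_k(\bipyr(P_n)\cap u^\perp)\in\Omega(n)$ for $0\le k\le d-1$. We apply \Cref{prop:slices_bip} with the optimal constants
\[
s_k^{(n)} := \min_{v\in\Sph^{d-1}} f_k(P_n\cap v^\perp)\quad (0\le k\le d-2), \qquad s_{-1}=1,\quad s_{d-1}=0 .
\]
The hypothesis $0\in\inter(P_n)$ holds because a $d$-polytope containing the origin in its relative interior contains it in its interior. \Cref{prop:slices_bip} then yields
\[
f_k(\bipyr(P_n)\cap u^\perp)\ \ge\ \min\bigl(f_k(P_n),\ s_k^{(n)}+2s_{k-1}^{(n)}\bigr).
\]
The first entry is in $\Theta(n)$ by (i) for $P_n$. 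For the second entry we split by $k$. If $0\le k\le d-2$, then $s_k^{(n)}\in\Omega(n)$ by (ii) for $P_n$, and the added term $2s_{k-1}^{(n)}$ is nonnegative. If $k=d-1$, the second entry is $0+2s_{d-2}^{(n)}\in\Omega(n)$. In every case the second entry is in $\Omega(n)$. Since the minimum of two functions in $\Omega(n)$ is in $\Omega(n)$ (take the smaller constant and the larger threshold), condition (ii) follows.

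\emph{Main obstacle.} The only delicate point is the boundary indices. For $k=0$ in Step 1, the term $f_{-1}=1$ must not break the $\Theta(n)$ bound, and it does not. For $k=d-1$ in Step 2, the convention $s_{d-1}=0$ forces the bound to rest on $s_{d-2}$, which still lies in $\Omega(n)$ because $d-2\le d-2$ falls within the range of condition (ii) for $P_n$. When $d=2$ this requires $s_0^{(n)}\in\Omega(n)$, which holds by assumption. No other difficulty arises, since \Cref{prop:slices_bip} already contains all the geometric content.
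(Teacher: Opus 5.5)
Your proof is correct and follows the paper's argument. The bipyramid $f$-vector formula gives condition (i), and \Cref{prop:slices_bip}, applied with $s_k = \min_{v} f_k(P_n\cap v^\perp)$, gives condition (ii). Your explicit handling of the boundary indices $k=0$ (where $f_{-1}=1$) and $k=d-1$ (where $s_{d-1}=0$) simply spells out what the paper's proof leaves implicit.
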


\begin{proof}
    We know that 
    \[f_k(\bipyr(P_n)) = \begin{cases}
    f_k(P_n) + 2f_{k-1}(P_n) & \text{for } 0 \leq k \leq d-1 \\
    2f_{k-1}(P_n) & \text{for } k=d \ . 
    \end{cases}\] 
    Since by assumption $f_k(P_n) \in \Theta(n)$ for $0 \leq k \leq d-1$, it follows that $f_k(\bipyr(P_n)) \in \Theta(n)$ for all $0 \leq k \leq d$.
    
    By \Cref{prop:slices_bip} we know that $f_k(\bipyr(P_n) \cap u^\perp)  \geq \min(f_k(P_n), s_k+2s_{k-1})$ for all $u \in \Sph^{d}$, where $s_k:= \min _{u \in \Sph^{d-1}}f_k(P_n \cap u^\perp)$ for all $0 \leq k \leq d-2$, and $s_{-1} = 1, s_{d-1} = 0$. Since $(P_n)_{n \in \N}$ is a big-slice sequence, we have $s_k \in \Omega(n)$ for $0 \leq k \leq d-2$, so $s_k+2s_{k-1} \in \Omega(n)$ for $0 \leq k \leq d-1$, and $f_k(P_n) \in \Theta(n)$ for $0 \leq k \leq d-1$.
    Therefore $\min_{u \in \Sph^d} f_k(\bipyr(P_n) \cap u^\perp) \in \Omega(n)$ for $0 \leq k \leq d-1$. \\
\end{proof}

It remains to construct a $3$-dimensional big-slice sequence.

\begin{definition}[Shell polytope]\label{def:shell}
    For $n \geq 3$ and $1 \leq i \leq n-1$, let
    \begin{equation*}
        \begin{array}{rl}
            \theta_i := -\frac{\pi}{4} + (i-1)\frac{\pi}{2(n-2)}, \qquad
            v_i := (\sin\theta_i, 0, \cos\theta_i), \qquad 
            w_i := (0, \sin\theta_i, -\cos\theta_i) \ .
        \end{array}
    \end{equation*}
    We define the \emph{n-shell polytope} as
    \begin{equation*}
        S_n := \conv(v_1, \ldots, v_{n-1}, w_1, \ldots, w_{n-1}) \subset \R^3 \ .
    \end{equation*}
\end{definition}

As can be seen in \Cref{fig:shell14} all vertices $v_i$ and $w_i$ lie on two perpendicular arcs. Concretely, the $v_i$ lie in the circle $x^2+z^2=1$, $y=0$ and the $w_i$ lie in the circle $y^2+z^2=1$, $x=0$.  We continue by formally describing the facial structure of a shell polytope.

\begin{figure}[h]  
    \centering
    \includegraphics[width=0.2\linewidth]{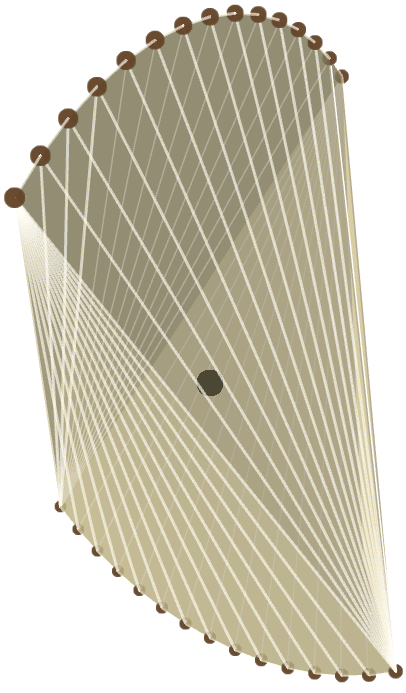}
    \caption{The $14$-shell polytope.}
    \label{fig:shell14}
\end{figure}

\begin{lemma}\label{lem:shell-faces}
    The $n$-shell polytope is a simplicial $3$-dimensional polytope, it contains the origin in its interior, and its facets are
    \[
        \conv(v_i,v_{i+1},w_1), \quad
        \conv(v_i,v_{i+1},w_{n-1}), \quad
        \conv(w_i,w_{i+1},v_1), \quad
        \conv(w_i,w_{i+1},v_{n-1}), 
    \]
    for $i \in [n-2]$. In particular, $f_1(S_n) = 6n-12$, and $f_2(S_n)=4n-8$.
\end{lemma}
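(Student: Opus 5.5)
We verify each listed triangle is a facet by exhibiting a supporting linear functional, and then argue that these triangles cover the whole boundary. The counts then follow from simpliciality. Throughout, write $v_i=(\sin\theta_i,0,\cos\theta_i)$ with $\theta_i$ increasing from $-\pi/4$ to $\pi/4$; the $w_i$ play the symmetric role with $z$ flipped.

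\emph{Containment of the origin.} The points $v_1,v_{n-1},w_1,w_{n-1}$ are $(\mp\tfrac{1}{\sqrt2},0,\tfrac1{\sqrt2})$ and $(0,\mp\tfrac1{\sqrt2},-\tfrac1{\sqrt2})$. These four points form a tetrahedron. Its vertices sum to $0$, so the origin is its centroid and lies in its interior. Hence $0\in\inter(S_n)$ and $S_n$ is $3$-dimensional.

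\emph{Supporting functionals.} For $\conv(v_i,v_{i+1},w_1)$ take the functional $\ell(x,y,z)=a x+ b y + c z$. The first step is to make it constant on the chord $v_iv_{i+1}$ of the circle $y=0$. On that circle, $\ell$ restricted to the $v$'s equals $R\cos(\theta-\phi)$, where $(a,c)=R(\sin\phi,\cos\phi)$. Choosing $\phi=(\theta_i+\theta_{i+1})/2$ makes $\ell(v_i)=\ell(v_{i+1})$, and then every other $v_j$ has strictly smaller value, since the $\theta_j$ are distinct and lie in an interval of length $<\pi$. The free parameter $b$ controls the $w$'s: $\ell(w_j)=b\sin\theta_j - c\cos\theta_j$. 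We need this to be maximized uniquely at $j=1$ with value $R\cos(\theta_{i+1}-\theta_i)/2\cdot$ (the common value). Since $\sin\theta_j$ is increasing in $j$, taking $b$ negative of suitable size gives a unique maximizer $w_1$. The main obstacle is checking that the value at $w_1$ can be made equal to the value at $v_i$ while keeping the other $w_j$ strictly below. I would handle this by comparing consecutive values $\ell(w_1)-\ell(w_2)$. Because $c>0$ (as $|\phi|<\pi/4$), the term $-c\cos\theta_j$ is largest at the endpoints $j=1,n-1$. Choosing $b<0$ then breaks the tie toward $j=1$, and scaling $b$ continuously adjusts $\ell(w_1)$ upward until it hits the common value, by the intermediate value theorem. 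At that point the other $w_j$ remain below, using concavity of $j\mapsto b\sin\theta_j-c\cos\theta_j$ on the relevant range. If this is messy, an alternative is to compute the plane through the three points explicitly and check signs.

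\emph{Remaining facets and counts.} The other three families follow from symmetry. The map $(x,y,z)\mapsto(-x,y,z)$ sends $v_i\mapsto v_{n-i}$, fixes the $w$'s, and swaps $w_1$ with $w_{n-1}$ once composed with $(x,y,z)\mapsto(x,-y,z)$. Likewise, $(x,y,z)\mapsto(y,x,-z)$ swaps the $v$- and $w$-families. To see these are all facets, check that the listed triangles form a closed surface: every edge lies in exactly two listed triangles, and the list is edge-connected. Its edges are
\begin{itemize}
\item the path edges $v_iv_{i+1}$ and $w_iw_{i+1}$, giving $2(n-2)$;
\item the fan edges $v_iw_1$, $v_iw_{n-1}$, $w_iv_1$, $w_iv_{n-1}$, giving $4(n-1)$ minus the $4$ duplicates $v_1w_1,v_1w_{n-1},v_{n-1}w_1,v_{n-1}w_{n-1}$.
\end{itemize}
A closed surface inside $\partial S_n$ built from facets must be all of $\partial S_n$. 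This gives $f_2=4(n-2)=4n-8$ and $f_1=2(n-2)+4(n-1)-4=6n-12$, which is consistent with Euler's formula $f_0=2n-2$.
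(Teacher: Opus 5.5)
Your proof is correct, but it runs in the opposite direction from the paper's. The paper argues by exclusion. It uses that the $v_i$ (and likewise the $w_i$) lie on a convex arc to rule out faces with three vertices from one family. It then writes down the plane through an arbitrary triple $v_i,v_j,w_k$ and does a sign analysis showing this plane supports $S_n$ only when $j=i+1$ and $k\in\{1,n-1\}$. From this it counts facets and obtains $f_1$ from Euler's formula.

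You instead certify each listed triangle directly. The angular form $\ell(v_j)=R\cos(\theta_j-\phi)$ makes the tie at $v_i,v_{i+1}$ transparent, the sign of $b$ controls the $w$'s, and symmetries reduce the four families to one. Completeness then becomes a purely combinatorial check rather than a case analysis over all triples. Your route buys an explicit certificate that the listed triangles really are facets, a step the paper only states briefly. The paper's route shows directly that no other triple spans a face, without relying on the guessed list closing up.

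To make your ``closed surface'' step airtight, phrase it via facet adjacency. Each edge of a $3$-polytope lies in exactly two facets. So if every edge of a listed facet lies in two listed triangles, the listed set is closed under adjacency in the facet-adjacency graph of $S_n$. That graph is connected, so the listed triangles are all the facets; edge-connectedness of your list is not even needed.

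One slip to fix: your closing appeal to concavity of $\theta\mapsto b\sin\theta-c\cos\theta$ is false. For $b<0<c$ its second derivative is $|b|\sin\theta+c\cos\theta$, which equals $c>0$ at $\theta=0$. The appeal is also unnecessary. For every $b<0$ you already have $\ell(w_1)-\ell(w_j)=c(\cos\theta_j-\cos\theta_1)+|b|(\sin\theta_j-\sin\theta_1)>0$ for all $j\neq 1$. Since $\ell(w_1)=-(b+c)/\sqrt2$ is affine in $b$, the choice $b=-c-\sqrt2\,R\cos\bigl(\tfrac{\theta_{i+1}-\theta_i}{2}\bigr)<0$ works directly, with no intermediate value argument.
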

\begin{proof}
    First note that $\conv(v_1,v_{n-1},w_1,w_{n-1})$ is a $3$-dimensional simplex containing the origin in its interior. Since this is a subpolytope of $S_n$, the same holds for $S_n$ itself.
    We begin by excluding faces of types other than those listed in the statement. To show that there is no face containing $v_i,v_j,v_k$, note that $v_1,\dots,v_{n-1}$ lie on a convex arc. Hence, every plane intersects this arc at most twice, or it contains the entire arc. The plane containing the entire arc is the plane $\{x \in \R^3 \mid x_2 = 0\}$, which separates $w_1$ and $w_{n-1}$ into opposite halfspaces, hence it is not a supporting hyperplane. The same argument excludes faces with vertices $w_i,w_j,w_k$. 
    Consider now a hyperplane $H$ through $v_i,v_j,w_k$ for $i < j$ and some $k \in [n-1]$.  Then any $x \in H$ satisfies the equation
    \begin{multline*}
\sin\theta_k \sin\left(\frac{\theta_i+\theta_j}{2}\right)x_1
+
\left(
\cos\left(\frac{\theta_j-\theta_i}{2}\right)
+
\cos\theta_k\cos\left(\frac{\theta_i+\theta_j}{2}\right)
\right)x_2
+
\sin\theta_k \cos\left(\frac{\theta_i+\theta_j}{2}\right)x_3 \\
=
\sin\theta_k\cos\left(\frac{\theta_j-\theta_i}{2}\right).
\end{multline*}
    If $w_k \not \in \{w_1,w_{n-1}\}$, then $w_1,w_{n-1}$ are on opposite sides of the corresponding hyperplane. If $w_k \in \{w_1,w_{n-1}\}$ and $ v_j \neq v_{i+1}$, then the hyperplane separates $v_{i+1}$ from $v_{1},\dots,v_{i-1},v_{j+1},\dots,v_{n-1}$.
    Otherwise, i.e., if $\{v_i,v_j,w_k\} \in \{ \{v_i,v_{i+1},w_1\},\{v_i,v_{i+1},w_{n-1}\} \}$, then this expression is maximized or minimized exactly on the three vertices. The faces of the form $w_i,w_j,v_k$ follow analogously. Thus, the shell polytope has $2(n-1)$ vertices and $f_2(S_n) = 4(n-2) = 4n - 8$ facets. By Euler's formula, the number of edges is therefore $f_1(S_n) = f_0(S_n) + f_2(S_n) - 2 = 6n - 12$.
\end{proof}

It remains to show that a sequence of $n$-shell polytopes is indeed a big-slice sequence in dimension $3$. By \Cref{lem:shell-faces} we already have $f_k(S_n) \in \Theta(n)$ for $k=0,1,2$. Since central slices of the $n$-shell are two-dimensional we have $f_0(S_n \cap u^\perp) = f_1(S_n \cap u^\perp)$, and so it remains to show that
$\min_{u \in \Sph^2} f_0(S_n \cap u^\perp) \in \Omega(n)$.

\begin{lemma}
   For the $n$-shell polytope $S_n$ and $u \in \Sph^2$ it holds that $f_{0}(S_n \cap u^\bot) \geq n$.
\label{lem:shellboundslices}
\end{lemma}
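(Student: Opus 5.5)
The plan is to count, for each $u\in\Sph^2$, the vertices of the polygon $S_n\cap u^\perp$. By \Cref{lem:types-of-faces-in-slices}, these are the vertices of $S_n$ lying on $u^\perp$ together with the edges of $S_n$ properly crossed by $u^\perp$. By \Cref{lem:shell-faces}, the edges are:
\begin{enumerate}[label=\textup{(}\alph*\textup{)}]
\item the two arc paths $v_1v_2\cdots v_{n-1}$ and $w_1w_2\cdots w_{n-1}$;
\item the fan edges $v_iw_1$ and $v_iw_{n-1}$ for $i\in[n-1]$;
\item the fan edges $w_iv_1$ and $w_iv_{n-1}$ for $i\in[n-1]$.
\end{enumerate}
Write $\sigma(x)=\sgn\langle u,x\rangle$.

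The basic geometric tool is the following. All $v_i$ lie on an arc of the unit circle in the plane $\{x_2=0\}$, and this arc has angular length $\pi/2<\pi$. A central plane either contains this circle or meets it in two antipodal points, so it meets the arc at most once. Hence, unless $u^\perp=\{x_2=0\}$, the sequence $\sigma(v_1),\dots,\sigma(v_{n-1})$ changes sign at most once, and a zero can occur only at a single position. The same holds for the $w_i$. Also, $0\in\inter\conv(v_1,v_{n-1},w_1,w_{n-1})$ by the proof of \Cref{lem:shell-faces}, so the four extreme vertices cannot all lie weakly on one side of $u^\perp$.

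I would then split into cases according to the signs of these four vertices.

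\textbf{Case 1: $\sigma(w_1)=-\sigma(w_{n-1})\neq0$.} For each $i$, either $v_i\in u^\perp$, or exactly one of the edges $v_iw_1$, $v_iw_{n-1}$ is properly crossed. These contributions are distinct for distinct $i$, which gives $n-1$ slice vertices. In addition, the path $w_1\cdots w_{n-1}$ joins points on opposite sides, so it contains a vertex $w_j\in u^\perp$ or a crossed edge $w_jw_{j+1}$. This contribution is not incident to any $v_i$, so it is new. In total we get at least $n$. This case also covers $u^\perp=\{x_2=0\}$: here all $v_i$ lie on the plane, and $w_1,w_{n-1}$ have opposite $x_2$-coordinates.

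\textbf{Case 2: $\sigma(v_1)=-\sigma(v_{n-1})\neq0$.} This is symmetric to Case 1.

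\textbf{Case 3: neither of the above.} By the simplex observation, after reorienting we may assume $w_1,w_{n-1}$ lie in the closed side $\{\sigma\ge0\}$ and $v_1,v_{n-1}$ lie in $\{\sigma\le0\}$, with at least one strict sign on each side. By the monotonicity of signs along the arc, all interior $v_i$ ($2\le i\le n-2$) are then strictly negative: a zero or sign change strictly inside the arc would force an endpoint onto the other side. Likewise, all interior $w_i$ are strictly positive. If, say, $w_1$ is strictly positive, then each interior $v_i$ contributes the crossed edge $v_iw_1$, and each interior $w_j$ contributes a crossed edge $w_jv_1$ or $w_jv_{n-1}$ to whichever of $v_1,v_{n-1}$ is strictly negative. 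This gives at least $2(n-3)$ contributions, plus those coming from the four extreme vertices themselves (either they lie on $u^\perp$, or the edges among them, such as $v_1w_1$, are crossed). This yields at least $n$ for $n$ large enough. Small $n$ can be checked directly, or absorbed, since only $\Omega(n)$ is needed later.

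The main obstacle I expect is the bookkeeping of the degenerate subcases, where some of $v_1,v_{n-1},w_1,w_{n-1}$ lie on $u^\perp$. There one must make sure that the contributions counted are genuinely distinct slice vertices, and that enough fan edges remain strictly crossed. I would handle this by listing explicitly which extreme vertices are zero, at most two per arc by monotonicity, and in each subcase pairing every interior $v_i$ and $w_j$ with one crossed fan edge. This pairing is injective because fan edges are determined by their interior endpoint.
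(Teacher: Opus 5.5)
Your proof is correct, but it is organized differently from the paper's.

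The paper splits according to how $u^\perp$ meets the arc $\gamma$ through $v_1,\dots,v_{n-1}$: it contains $\gamma$, misses it, or meets it once. In the "misses" case it computes explicitly ($c>|a|$) that one of $w_1,w_{n-1}$ lies on the opposite side.

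You split instead by the sign pattern of the four corner vertices $v_1,v_{n-1},w_1,w_{n-1}$. This buys you three things.
\begin{itemize}
\item The argument is symmetric in the two arcs: Case 2 is Case 1 under the linear symmetry $(x_1,x_2,x_3)\mapsto(x_2,x_1,-x_3)$, which swaps $v_i$ and $w_i$.
\item The coordinate computation is replaced by the observation that the origin is interior to $\conv(v_1,v_{n-1},w_1,w_{n-1})$.
\item The special plane $\{x_2=0\}$, where arc-monotonicity fails, falls into Case 1. Case 1 never uses monotonicity.
\end{itemize}
Your Case 1 count is essentially the paper's final subcase with $v$ and $w$ exchanged: one contribution per $v_i$ via $v_i$ itself or a fan edge to $w_1$ or $w_{n-1}$, plus one crossing of the $w$-path.

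One point needs fixing. In Case 3 your hedge about small $n$ is unnecessary, and "absorbing" small $n$ into $\Omega(n)$ would not prove the lemma as stated. Make the corner count explicit:
\begin{itemize}
\item Pick a strictly negative corner $v_a$ and a strictly positive corner $w_b$; the edge $v_aw_b$ is properly crossed.
\item The other corner $v_{a'}$ contributes either itself, if it lies on $u^\perp$, or the crossed edge $v_{a'}w_b$.
\item The other corner $w_{b'}$ contributes either itself or the crossed edge $v_aw_{b'}$.
\end{itemize}
These three slice vertices are distinct from each other. They are also distinct from your $2(n-3)$ interior contributions, which all come from edges with a non-corner endpoint. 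So Case 3 gives $f_0(S_n\cap u^\perp)\ge 2n-3\ge n$ for every $n\ge 3$, which is stronger than needed.

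A minor remark: in Case 3 the plane $u^\perp$ is neither $\{x_2=0\}$ nor $\{x_1=0\}$. Hence at most one vertex of each arc lies on $u^\perp$, not two as you wrote.
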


\begin{proof}
    Let $u=(a,b,c)$ and write $H=u^\perp$. 
    Let $\gamma$ be the arc in the circle $\{(x,y,z) \in \R^3 \mid x^2+z^2=1,y=0\}$, from $v_1$ to $v_{n-1}$, on which all $v_i$ lie. If $H$ intersects $\gamma$ in two distinct points, then together with the origin, $H$ contains three points of $xz$ plane and hence $H$ contains the entire arc. We distinguish between three cases: $H$ contains $\gamma$, $H$ does not intersect $\gamma$ or $H$ intersects $\gamma$ in one point.

    If $H$ contains $\gamma$, then $u = \pm (0,1,0)$. In this case, $H$ contains $v_1, \ldots, v_{n-1}$, and intersects the edge $\conv(w_{\lfloor\frac{n}{2}\rfloor}, w_{\lfloor\frac{n}{2}\rfloor+1})$, and hence $f_0(S_n \cap u^\perp) \geq n$.
    If $H$ does not intersect $\gamma$, then all $v_i$ are in one open halfspace of $H$,
    say $H^+$. So $\langle v_i,u \rangle > 0$ for all $i \in [n-1]$, and 
    \[
        \langle v_1,u \rangle = \frac{\sqrt{2}}{2} (-a + c) > 0\ , \qquad \langle v_{n-1},u \rangle = \frac{\sqrt{2}}{2} (a + c) > 0 \ ,
    \]
    and hence $c > |a|$. In particular, $c > 0$. Moreover, we have
    \[
        \langle w_1,u \rangle = \frac{\sqrt{2}}{2} (-b - c) \ , \qquad \langle w_{n-1},u \rangle = \frac{\sqrt{2}}{2} (b - c) \ ,
    \]
    and $c > 0$ implies that at least one of these inner products is negative. Thus, at least one of the vertices $w_1,w_{n-1}$ is contained in $H^-$.
    If both $w_1, w_{n-1} \in H^-$, then $u^\perp$ intersects all edges of the form $v_i w_k$ for $k \in \{1,n-1\}$ and all $i \in [n-1]$, so $f_0(S_n \cap u^\perp) = 2(n-1)$. Otherwise, suppose without loss of generality that $w_1 \in H^-, w_{n-1} \in \overline{H^+}$. Then there exists some $j \in [n-2]$ such that $w_1,\dots,w_j \in H^-, w_{j+1} \in \overline{H^+}$ and $w_{j+2},\dots,w_{n-1} \in H^+$. Thus, $H$ intersects the edge $w_jw_{j+1}$, and all edges $v_iw_1$, for $i \in [n-1]$,
    yielding $f_0(S_n \cap u^\perp) \geq n $.
    
    Finally, suppose that $H$ intersects $\gamma$ once. Then w.l.o.g. $\langle v_1, u \rangle \geq 0$ and $\langle v_{n-1}, u \rangle < 0$ and there exists some edge $v_j v_{j+1}$ for $j \in [n-2]$ which has nonempty intersection with $u^\perp$. 
    If $\langle v_1,u\rangle=0$, then $v_2,\ldots,v_{n-1}\in H^-$. Moreover, at least one of $w_1,w_{n-1}$ lies in $H^+$, since $H = u^\perp$ is not a supporting hyperplane. Suppose without loss of generality that $w_{n-1}\in H^+$ (otherwise, interchange the roles of $w_1$ and $w_{n-1}$ below). Then $H$ properly intersects the $n-2$ edges $v_iw_{n-1}$ for $i=2,\ldots,n-1$. Together with the vertex $v_1\in H$, this gives $n-1$ vertices of the slice. If $w_1\in H^+$, then the edge $v_2w_1$ gives one further vertex. If $w_1\in H$, then $w_1$ itself gives one further vertex. If $w_1\in H^-$, then $H$ intersects some edge $w_jw_{j+1}$. Hence, in all cases, $f_0(S_n\cap u^\perp)\geq n$. 
    We are left with the case where $\langle v_1,u\rangle>0$ and $\langle v_{n-1}, u \rangle < 0$.
    Let $i \in [n-1]$. If $\langle w_i, u \rangle = 0$, then $w_i$ is a vertex of $S_n \cap u^\perp$. If $\langle w_i, u \rangle > 0$, then $u^\perp$ intersects the edge $v_{n-1}w_i$ in its relative interior. If $\langle w_i, u \rangle < 0$, then $u^\perp$ intersects the edge $v_{1}w_i$ in its relative interior. 
    Thus, $f_0(S_n \cap u^\perp) \geq n$.
\end{proof}

\begin{proposition}\label{prop:shell-big-slice}
    $(S_n)_{\substack{n \geq 3}}$ is a big-slice sequence of $3$-dimensional polytopes.
\end{proposition}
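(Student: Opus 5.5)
The plan is to verify the two conditions in the definition of a big-slice sequence for $d=3$, using \Cref{lem:shell-faces} and \Cref{lem:shellboundslices}.

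\textbf{Condition (i).} By \Cref{lem:shell-faces}, the face numbers of $S_n$ are
\[
f_0(S_n)=2(n-1), \qquad f_1(S_n)=6n-12, \qquad f_2(S_n)=4n-8.
\]
Each of these is a linear function of $n$ with positive leading coefficient, so each lies in $\Theta(n)$. For example, for $n\geq 3$ we have $2n-2 \leq 6n-12 \leq 6n$ as soon as $n \geq 3$, with analogous elementary bounds for the other two. The same lemma also shows that $S_n$ contains the origin in its interior. This matters for the induction via \Cref{prop:big-slice-pyramids}, because that result applies bipyramids to $S_n$ and needs this hypothesis.

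\textbf{Condition (ii).} Here we need $\min_{u\in\Sph^2} f_k(S_n\cap u^\perp)\in\Omega(n)$ for $k=0,1$. Since $0\in\inter(S_n)$, every central slice $S_n\cap u^\perp$ is a $2$-dimensional polygon. For a polygon the number of edges equals the number of vertices, so $f_1(S_n\cap u^\perp)=f_0(S_n\cap u^\perp)$. \Cref{lem:shellboundslices} gives $f_0(S_n\cap u^\perp)\geq n$ for every $u\in\Sph^2$. Hence the minimum over $u$ is at least $n$ for both $k=0$ and $k=1$, which is $\Omega(n)$ with constant $1$.

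There is no real obstacle. All the geometric work is already done in \Cref{lem:shell-faces} and \Cref{lem:shellboundslices}. The only point needing care is justifying that central slices are full $2$-dimensional polygons, so that $f_0=f_1$; this follows from the origin lying in the interior.
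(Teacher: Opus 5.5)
Your proof is correct and follows the paper's own argument: read off $f_0,f_1,f_2$ of $S_n$ from \Cref{lem:shell-faces} for condition (i), then use $f_0=f_1$ for the $2$-dimensional central slices together with \Cref{lem:shellboundslices} for condition (ii). The extra justification you give, that central slices are genuine polygons because $0\in\inter(S_n)$, is a welcome detail the paper leaves implicit.
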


\begin{proof}
    To be a big-slice sequence we need to show that $f_k(S_n) \in \Theta(n)$ for $k=0,1,2$ and that $\min_{u \in \Sph^2}f_k(S_n \cap u^\perp) \in \Omega(n)$ for $k=0,1$.
    By definition $f_0(S_n) = 2n-2 \in \Theta(n)$, and by \Cref{lem:shell-faces} $f_1(S_n) = 6n-12 \in \Theta(n)$ and
    $f_2(S_n)=4n-8\in \Theta(n)$.
    On the other hand, $f_0(S_n \cap u^\perp)=f_1(S_n \cap u^\perp)$ and \Cref{lem:shellboundslices},
    $f_1(S_n\cap u^\perp)\geq n$ for every $u \in \Sph^2$. Hence
    $\min_{u\in\Sph^2} f_1(S_n\cap u^\perp) \in \Omega(n)$.
\end{proof}

\begin{corollary}\label{cor:big-slice-bipyramid-shell}
    For any $d \geq 3$,  iterated bipyramids $(\bipyr^{d-3}(S_n))_{n \geq 3}$ over a sequence of $n$-shell polytopes form a big-slice sequence of $d$-dimensional polytopes.
\end{corollary}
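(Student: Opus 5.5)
The proof is an induction on $d$. The base case is \Cref{prop:shell-big-slice}, and the induction step is \Cref{prop:big-slice-pyramids}.

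For $d=3$, $\bipyr^{0}(S_n)=S_n$. By \Cref{prop:shell-big-slice}, $(S_n)_{n\geq 3}$ is a big-slice sequence of $3$-dimensional polytopes.

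For the step, assume $(\bipyr^{d-3}(S_n))_{n\geq 3}$ is a big-slice sequence of $d$-dimensional polytopes. We apply \Cref{prop:big-slice-pyramids} to this sequence and get that $(\bipyr(\bipyr^{d-3}(S_n)))_{n\ge3}=(\bipyr^{d-2}(S_n))_{n\ge3}$ is a big-slice sequence in dimension $d+1$.

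The one hypothesis to check at each stage is that every polytope in the sequence contains the origin in its (relative) interior. This is needed both for \Cref{prop:big-slice-pyramids} and for the standard bipyramid to be defined.
\begin{itemize}
\item For $S_n$, this is part of \Cref{lem:shell-faces}.
\item It is preserved under the standard bipyramid. If $0\in\inter(P)\subset\R^d$, then $\bipyr(P)=\conv(P\times\{0\},\pm e_{d+1})$ contains $0$ in its interior, because it contains a small ball $B\times\{0\}$ around the origin together with the two points $\pm e_{d+1}$, and hence a full-dimensional double cone around $0$.
\end{itemize}
With this, the induction closes.

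There is no serious obstacle. The only point needing care is the indexing: the sequences are indexed by $n\geq 3$ rather than $n\in\N$. This is harmless, since the asymptotic conditions $\Theta(n)$ and $\Omega(n)$ in the definition of big-slice sequences only concern large $n$.
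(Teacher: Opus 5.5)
Your proof is correct and is the paper's argument: induction on $d$, with \Cref{prop:shell-big-slice} as the base case and \Cref{prop:big-slice-pyramids} as the inductive step. You also explicitly check that the origin stays in the interior under taking standard bipyramids, a hypothesis the paper's one-line proof uses without verifying it, so that check is a welcome addition.
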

\begin{proof}
    By \Cref{prop:shell-big-slice}, $(S_n)_{n \geq 3}$ is a big-slice sequence of $3$-dimensional polytopes. By \Cref{prop:big-slice-pyramids}, bipyramids over big-slice sequences are big-slice sequences, which implies the statement.
\end{proof}

Combining the small-slice sequence constructed in \Cref{sec:small-slice} with the big-slice sequence from above yields the desired counterexamples to the combinatorial versions of the Busemann-Petty and Bourgain's slicing problem.

\begin{theorem}\label{th:disprove-BP}
    The combinatorial versions of the Busemann-Petty problem and Bourgain's slicing problem are false. 
    More precisely, for every dimension $d \geq 3$, every $k \in [d-1]$, and for every constant $C > 0$,
    there exist $n,m \in \N$ such that $f_{k-1}(G^{d}(m) \cap u^\perp) \leq f_{k-1}(\bipyr^{d-3}(S_n) \cap u^\perp)$ for every $u \in \Sph^{d-1}$, and $f_k(G^{d}(m)) > C \cdot f_k(\bipyr^{d-3}(S_n))$.
\end{theorem}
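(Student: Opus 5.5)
The plan is to assemble the theorem directly from the blueprint in \Cref{prop:bp_counterexample}, using the two explicit sequences constructed above. By \Cref{cor:big-slice-bipyramid-shell}, for every $d \geq 3$ the sequence $(\bipyr^{d-3}(S_n))_{n \geq 3}$ is a big-slice sequence of $d$-dimensional polytopes. By \Cref{cor:small-slice-iterated-globes}, $(G^d(m))_{m \in \N}$ is a small-slice sequence of $d$-dimensional polytopes. Substituting $P_n = \bipyr^{d-3}(S_n)$ and $Q_m = G^d(m)$ into \Cref{prop:bp_counterexample} then gives, for every $k \in [d-1]$ and every $C>0$, indices $n,m$ with $f_{k-1}(G^d(m)\cap u^\perp) \le f_{k-1}(\bipyr^{d-3}(S_n)\cap u^\perp)$ for all $u$ and $f_k(G^d(m)) > C f_k(\bipyr^{d-3}(S_n))$.

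Before invoking these results, I will check their hypotheses. First, every polytope must contain the origin in its interior. For the shell this is \Cref{lem:shell-faces}, and bipyramids preserve the property because the base sits at $x_{d+1}=0$ with apices $\pm e_{d+1}$. For globes, the regular $m$-gon is centered at the origin. The globe over a polytope $P$ with $0 \in \inter(P)$ contains the scaled copy of $P$ at height $0$, since $k=0$ is among the indices, and it contains points above and below that copy, so the origin is interior. Second, \Cref{prop:big-slice-pyramids} and \Cref{prop:smallslice_induction} require the base sequences to contain the origin, which has just been verified inductively.

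Third, I must check that the asymptotic hypotheses of \Cref{prop:bp_counterexample} cover all needed indices. For $k \in [d-1]$ we need $f_k(P_n) \in \Theta(n)$ and $f_k(Q_m) \in \Theta(m^{d-1})$, both of which are part of the definitions. We also need the slice bounds for $f_{k-1}$ with $0 \le k-1 \le d-2$, which are exactly conditions (ii) in the definitions. The small-slice condition is stated for affine hyperplanes, so it holds in particular for central ones.

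I expect no real obstacle here; the substantive work is in the earlier results. The only delicate point is the quantifier order in \Cref{prop:bp_counterexample}. The constants $C_1, C_1', C_2, C_2'$ must be uniform over $u$ and over the finitely many $k$, and the window $\frac{C_2'}{C_1'}m^{d-2} \le n < \frac{C_2}{CC_1}m^{d-1}$ must contain an integer $n \ge N_0$ for large $m$. This holds because the window has width of order $m^{d-1}$, which exceeds $1$. Since the pair $(Q,P)$ satisfies the slice inequality but violates $f_k(Q)\le C f_k(P)$ for every $C$, the Bourgain version fails. Taking $C=1$ gives the failure of the Busemann–Petty version, and since $C$ may be taken independent of $d$ this also disproves the dimension-dependent variant \ref{prob:dim}.
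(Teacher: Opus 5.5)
Your proposal is correct and is essentially the paper's proof: the paper likewise plugs the big-slice sequence $(\bipyr^{d-3}(S_n))_{n\ge 3}$ from \Cref{cor:big-slice-bipyramid-shell} and the small-slice sequence $(G^d(m))_{m\in\N}$ from \Cref{cor:small-slice-iterated-globes} into \Cref{prop:bp_counterexample}. Your extra checks are details the paper leaves implicit, and you handle them correctly: the origin lies in the interior of the globes and bipyramids, the constants are uniform over $u$ and $k$, and the window contains an admissible integer $n$.
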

\begin{proof}
    By \Cref{prop:bp_counterexample}, every big-slice sequence and small-slice sequence of $d$-dimensional polytopes will yield a counterexample. 
    By \Cref{cor:big-slice-bipyramid-shell}, $(\bipyr^{d-3}(S_n))_{n \geq 3}$ is a big-slice sequence, and by \Cref{cor:small-slice-iterated-globes} $(G^{d}(m))_{m\in \N}$ is a small-slice sequence.
\end{proof}

\begin{remark}[Affine versions]\label{rem:bpp-affine}
    We note that the counterexample constructed in this section can also be adapted to counterexamples to affine versions of the combinatorial Busemann-Petty and Bourgain's slicing problem, in which we replace central with affine hyperplanes. The bound for the small-slice sequence already holds for affine slices in its current formulation (\Cref{prop:bp_counterexample}). We now describe how to obtain a valid bound for the big-slice sequence for affine slices.
    Note that for the shell-polytope, there exist affine hyperplanes that produce small slices: For example, cutting off a single vertex of a $3$-dimensional shell may yield a quadrilateral slice. However, to produce such a small slice, the hyperplane needs to have a certain distance from the origin. Indeed, one can show that there exists some radius $\varepsilon > 0$ such that all affine hyperplanes which have nonempty intersection with a ball $B_\varepsilon(\zero)$ exhibit all properties of a big-slice sequence, and that bipyramids preserve these properties (although the radius $\varepsilon > 0$ may change).
    We can then easily construct a counterexample for affine hyperplanes by taking a counterexample for central hyperplanes and scaling the small-slice sequence of polytopes to be contained in the ball $B_\varepsilon(\zero)$. Admittedly, this does not seem like a very satisfying solution, so it might be worthwhile to investigate affine variants of the combinatorial slicing problems which prevent such a ``simple'' solution.
\end{remark}

\begin{remark}[Centrally symmetric versions]\label{rmk:BP-symmetric}
    The original, metric formulations of the Busemann-Petty and Bourgain's slicing problem restrict to centrally symmetric convex bodies. Our combinatorial versions do not ask for this requirement. While we do not see a reason why this requirement should be relevant for the combinatorial versions, we are currently unaware of a centrally symmetric counterexample, thus leaving this case open. Note that the globe, i.e., the small-slice sequence constructed in this section can already be made centrally symmetric (by choosing a sequence of regular $2n$-gons for $n \in \N$). However, the shell-polytope is far from being centrally symmetric.
\end{remark}

    \begin{question}
        Do the combinatorial Busemann-Petty and Bourgain's slicing problems hold for centrally symmetric polytopes? In particular,
        is there a centrally symmetric big-slice sequence?
    \end{question}

\printbibliography

\vfill

\vfill
\begin{center}
\begin{minipage}{.42\textwidth}
\noindent
\textsc{Anna Birkemeyer}\\
\textsc{Universität Osnabrück}\\
\url{abirkemeyer@uos.de} \\

\noindent
\textsc{Anouk Brose}\\
\textsc{University of California, Davis}\\
\url{aebrose@ucdavis.edu}
\end{minipage}
\begin{minipage}{.42\textwidth}
\noindent
\textsc{Marie-Charlotte Brandenburg}\\
\textsc{Ruhr-Universit\"at Bochum}\\
\url{marie-charlotte.brandenburg@rub.de}\\

\noindent
\textsc{Niklas Prün}\\
\textsc{Karlsruhe Institute of Technology}\\
\url{niklas.pruen@student.kit.edu}
\end{minipage}
\end{center}





\end{document}